\newtheorem{theorem}{Theorem}[section]
\newtheorem{lemma}[theorem]{Lemma}
\newtheorem{proposition}[theorem]{Proposition}
\newtheorem{definition}[theorem]{Definition}
\newtheorem{corollary}[theorem]{Corollary}
\newtheorem{remark}[theorem]{Remark}
\DeclareMathOperator{\Sp}{Sp}
\DeclareMathOperator{\Mod}{Mod}
\DeclareMathOperator{\Orb}{Orb}
\DeclareMathOperator{\Pic}{Pic}
\DeclareMathOperator{\Hom}{Hom}
\DeclareMathOperator{\map}{map}
\DeclareMathOperator{\Map}{Map}
\DeclareMathOperator{\Fun}{Fun}
\DeclareMathOperator{\End}{End}
\DeclareMathOperator{\RO}{RO}
\DeclareMathOperator{\Ind}{Ind}
\DeclareMathOperator{\StMod}{StMod}
\DeclareMathOperator{\cPic}{\mathcal{P}ic}
\newcommand{\bS}{\mathbb{S}}
\newcommand{\bZ}{\mathbb{Z}}
\newcommand{\Z}{\mathbb{Z}}
\newcommand{\bR}{\mathbb{R}}
\newcommand{\F}{\mathbb{F}}
\newcommand{\cF}{\mathcal{F}}
\newcommand{\cA}{\mathcal{A}}
\newcommand{\cS}{\mathcal{S}}
\newcommand{\cB}{\mathcal{B}}
\newcommand{\cC}{\mathcal{C}}
\newcommand{\cD}{\mathcal{D}}
\newcommand{\unit}{\mathbb{I}}
\newcommand{\gen}{\mathrm{gen}}
\newcommand{\fin}{\mathrm{fin}}
\newcommand{\colim}{\operatornamewithlimits{colim}}
\newcommand{\xto}[1]{\xrightarrow{#1}}
\newcommand{\underlinemap}{\underline{\map}}
\title{The Picard group in equivariant homotopy theory via stable module categories}
\author{Achim Krause}
\begin{document}
\maketitle

\begin{abstract}
We develop a mechanism of ``isotropy separation for compact objects'' that explicitly describes an invertible $G$-spectrum through its collection of geometric fixed points and gluing data located in certain variants of the stable module category. As an application, we carry out a complete analysis of invertible $G$-spectra in the case $G=A_5$. A further application is given by showing that the Picard groups of $\Sp^G$ and a category of derived Mackey functors agree.
\end{abstract}

\section{Introduction}

The $\infty$-category $\Sp^G$ of genuine $G$-spectra for a finite group $G$, obtained from $G$-spaces by stabilisation with respect to all representation spheres, has a very interesting Picard group. By construction, representation spheres $S^V$ give invertible objects in $\Sp^G$, but the corresponding homomorphism $\RO(G)\to \Pic(\Sp^G)$ is generally neither injective nor surjective. For this reason, invertible $G$-spectra are commonly called (stable) homotopy representations of $G$. They have been studied quite a lot, for example in \cite{tomdieckpetrie}, \cite{bauer}, \cite{fausklewismay}.

One can check that invertible objects of $\Sp^G$ are precisely given by compact objects $X$ all of whose geometric fixed points $\Phi^H(X) \in \Sp$ are equivalent to shifts $S^{d_H}$ of the sphere spectrum. By recording the dimension of those spheres into a \emph{dimension function} $H\mapsto d_H$, one obtains a homomorphism
\begin{equation}
\label{eq:dimensionhom}
\Pic(\Sp^G) \to \prod_{\Orb(G)} \Z
\end{equation}
where the product is over isomorphism classes of orbits of $G$ (or equivalently, conjugacy classes of subgroups of $G$).
This dimension homomorphism is studied by tom Dieck-Petrie \cite{tomdieckpetrie} to great effect: They identify the kernel with $\Pic(A(G))$, the Picard group of the Burnside ring of $G$, which is finite, and they determine the image rationally, by explicitly describing a certain subgroup of $\prod_{\Orb(G)} \Z$ and proving that the dimension homomorphism factors over a rational isomorphism onto that subgroup. The subgroup is precisely cut out by relations arising from subquotients $C_p\times C_p$ of $G$, due to Borel \cite{borel}. In particular, tom Dieck and Petrie prove that the dimension homomorphism is a rational isomorphism if $G$ admits no subquotients of the form $C_p\times C_p$.

Integrally, the image behaves in an interesting way. For example, the result by tom Dieck and Petrie shows abstractly that for an odd integer $n$, there is some $d\neq 0$ and some invertible $D_{2n}$-spectrum $X\in \Pic(\Sp^{D_{2n}})$ with underlying object $S^d$, but $\Phi^H X \simeq S^0$ for each nontrivial subgroup $H\subseteq G$. One can check that the possible $d$ are precisely the multiples of $4$. More generally, a complete description of the group of dimension functions realizable by invertible $G$-spectra is obtained by Bauer \cite{bauer} in terms of the representation theory of $G$.

In this paper, we exhibit an interesting connection between the dimension functions of invertible objects of $\Sp^G$, and the \emph{stable module category} of the group $G$. This is roughly obtained by localising finitely generated $G$-modules by finitely generated projective ones, and is closely related to Tate cohomology. We will see that the above example of invertible objects over $D_{2n}$ is closely connected to the $4$-periodicity of the Tate cohomology of $D_{2n}$, and more generally we will exhibit a mechanism by which $\Pic(\Sp^G)$ is controlled by the Picard groups of stable module categories associated to the Weyl groups of subgroups of $G$.

Invertible objects of stable module categories are known in modular representation theory as \emph{endotrivial modules}. Much work has been done to understand these in the abelian and $p$-group case, see \cite{dade1}, \cite{dade2},  \cite{alperin}, \cite{carlsonthevenaz1}, \cite{carlsonthevenaz2}, and there are strong results on how the Picard group of the stable module category of a group $G$ is controlled by its $p$-subgroups \cite{grodal}. 

Our key tool will be a variant of isotropy separation interacting well with compact objects and monoidal structures. Roughly, a compact $G$-spectrum will be described by its various geometric fixed points together with gluing data in certain stable module categories. This approach gives a very satisfying categorical perspective on several classical unstable arguments based on explicit attachment of cells. In particular, we obtain an inductive description of the Picard $\infty$-groupoid $\cPic(\Sp^G)$. Here $\Sp^G_\omega/\cF$ denotes a certain localisation of $\Sp^G_\omega$ obtained by killing isotropy from a family of orbits $\cF\subseteq \Orb(G)$.

\begin{theorem}
\label{thm:pullbackintro}
For a family $\cF$ of orbits and $G/K\not\in \cF$ with $G/K'\in \cF$ for all $K'\subsetneq K$, there is a $1$-cartesian diagram of Picard $\infty$-groupoids
\begin{equation}
\label{eq:pullbackintro}
\begin{tikzcd}
\cPic(\Sp_\omega^G/\cF) \rar\dar{H\Phi^K} & \cPic(\Sp_\omega^G/\cF\cup \{G/K\})\dar{H\overline{\Phi}^K}\\
\cPic(\Fun(BW_G(K),\Mod_\omega(\Z))) \rar & \cPic(\StMod(\Z W_G(K))).
\end{tikzcd}
\end{equation}
In particular, this diagram gives a pullback diagram on homotopy categories.
\end{theorem}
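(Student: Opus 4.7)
The plan is to realize the square as the image under $\cPic$ of a Cartesian square of symmetric monoidal stable $\infty$-categories, and then invoke that $\cPic$ preserves limits.

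First, I would set up the recollement encoded by the localization $j^*\colon \Sp_\omega^G/\cF \to \Sp_\omega^G/\cF\cup\{G/K\}$. By minimality of $K$ among orbits outside $\cF$, the kernel $\cK$ of this Verdier quotient consists of compact objects whose only surviving geometric fixed points are at $K$, and the ``isotropy separation for compact objects'' machinery of the paper should identify $\cK$ via $H\Phi^K$ with the subcategory of perfect $\Z[W_G(K)]$-modules inside $\Fun(BW_G(K),\Mod_\omega(\Z))$. Extending $H\Phi^K$ to all of $\Sp_\omega^G/\cF$, its composition with $j^*$ annihilates $\cK$ after passage to $\StMod(\Z W_G(K))$, so descends to a functor $H\overline{\Phi}^K$ on the quotient. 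This both produces the commutativity of the square and identifies the bottom row as the Verdier quotient analogous to the top.

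Second, I would show the underlying square of symmetric monoidal stable $\infty$-categories is Cartesian. Concretely, this is a fracture-type reconstruction statement: a compact object of $\Sp_\omega^G/\cF$ is determined by its image in $\Sp_\omega^G/\cF\cup\{G/K\}$, its geometric $K$-fixed datum in $\Fun(BW_G(K),\Mod_\omega(\Z))$, and an identification of the two in $\StMod(\Z W_G(K))$. Given such a compatible triple, the kernel $\cK$ lifts naturally to $\Sp_\omega^G/\cF$ via the identification from the first step, and glues to the chosen preimage of the ``open'' object — upstairs-equals-downstairs-glued-along-the-kernel. Applying $\cPic$, which preserves pullbacks since it is a right adjoint from symmetric monoidal $\infty$-categories to grouplike $E_\infty$-spaces, yields the $1$-Cartesian diagram of Picard $\infty$-groupoids; the pullback on homotopy categories then follows from taking $\pi_0$ of a Cartesian square of spaces.

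The main obstacle is the identification in the first paragraph: establishing that $H\Phi^K$ on $\cK$ lands in $\Mod_\omega(\Z)$ (rather than just $\Sp$) and sets up an equivalence with perfect $\Z[W_G(K)]$-modules. This is the technical heart of the reconstruction package — it requires controlling compact $G$-spectra with isotropy concentrated at $K$ in terms of an integral Weyl-equivariant model, which is presumably where the bulk of the paper's groundwork goes. Once this is in hand, the pullback property is a formal consequence of the induced recollement plus the limit-preservation of $\cPic$.
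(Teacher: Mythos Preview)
Your plan has a genuine gap: the square you want to prove Cartesian simply is not Cartesian as a diagram of stable $\infty$-categories. The identification you flag as ``the main obstacle'' is false, not merely technical. The kernel $\cK$ of $\Sp^G_\omega/\cF \to \Sp^G_\omega/\cF\cup\{G/K\}$ is the thick subcategory $\langle \Sigma^\infty_+ G/K\rangle$, and the functor $\Phi^K$ identifies it with $\langle \bS[W_G(K)]\rangle \subset \Fun(BW_G(K),\Sp_\omega)$, i.e.\ finite \emph{spectra} with Weyl group action. The further passage $H\colon \Sp_\omega \to \Mod_\omega(\Z)$ is far from an equivalence on these categories (think of all the $W_G(K)$-actions on a wedge of spheres with the same integral chains), so $H\Phi^K$ does not embed $\cK$ fully faithfully into perfect $\Z[W_G(K)]$-complexes. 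Consequently the hypothesis of your fracture/recollement argument fails, and the square with $\Z$-coefficients in the bottom row is not a pullback of $\infty$-categories. (Notice also that if it were, applying $\cPic$ would give an honestly Cartesian square of spaces, strictly stronger than the $1$-Cartesian conclusion actually asserted.)

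What the paper does instead is a two-step argument. First it proves the genuine pullback square with \emph{sphere} coefficients in the bottom row,
\[
\begin{tikzcd}
\Sp^G_\omega/\cF \rar\dar{\Phi^K} & \Sp^G_\omega/\cF\cup\{G/K\}\dar\\
\Fun(BW_G(K),\Sp_\omega)\rar & \StMod^{\fin}(\bS\, W_G(K)),
\end{tikzcd}
\]
via exactly the kind of kernel identification you describe, but for $\Phi^K$ rather than $H\Phi^K$. Applying $\cPic$ to this gives an honest pullback of Picard groupoids. Second, and this is the real content behind the ``$1$-Cartesian'' in the statement, it shows separately that the comparison square
\[
\begin{tikzcd}
\cPic(\Fun(BW_G(K),\Mod_\omega(\bS)))\rar\dar{H} & \cPic(\StMod^{\fin}(\bS\, W_G(K)))\dar{H}\\
\cPic(\Fun(BW_G(K),\Mod_\omega(\Z)))\rar & \cPic(\StMod^{\fin}(\Z\, W_G(K)))
\end{tikzcd}
\]
is $1$-Cartesian. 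This uses that invertible objects are concentrated in a single degree and that the fiber of $\bS\to H\Z$ is $1$-connective, together with a lifting argument for representatives concentrated in $[0,0]$. Pasting the Cartesian square with this $1$-Cartesian square yields the claimed $1$-Cartesian diagram. The loss of precision from ``Cartesian'' to ``$1$-Cartesian'' is exactly the price of replacing $\bS$ by $\Z$.
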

Inductively, this describes a Picard object $X$ of $\Sp^G$ through the homologies $H_*(\Phi^K X)$ together with gluing isomorphisms in $\StMod(\Z W_G(K))$. This is not only quite computable, which we illustrate by computing the dimension functions in the case $G=A_5$ in Section \ref{sec:applications}, it also has interesting theoretical applications. Indeed, the diagram \eqref{eq:pullbackintro} admits a version with coefficients, computing $\cPic(\Mod(R_G))$ instead of $\cPic(\Sp^G)$ for $R$ any ordinary connective commutative ring spectrum, and $R_G\in \Sp^G$ the equivariant spectrum obtained by restriction along $G\to e$ (usually called inflation). Diagram \eqref{eq:pullbackintro} then shows that the Picard group of $R_G$ only depends on $\pi_0(R)$. For example, we get:
\begin{theorem}
\label{thm:linearisationintro}
$\Pic(\Sp^G)$ agrees with the Picard group of $\Mod_{\Sp^G}(H\Z_G)$.
\end{theorem}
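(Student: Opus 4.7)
The plan is to invoke the coefficient version of Theorem~\ref{thm:pullbackintro} mentioned just before the statement. For a connective commutative ring spectrum $R$, this variant produces, for each family $\cF$ with $G/K$ minimal outside $\cF$, a $1$-cartesian square
\[
\begin{tikzcd}
\cPic(\Mod(R_G)_\omega/\cF) \rar\dar & \cPic(\Mod(R_G)_\omega/\cF\cup\{G/K\}) \dar \\
\cPic(\Fun(BW_G(K),\Mod_\omega(\pi_0 R))) \rar & \cPic(\StMod(\pi_0(R)[W_G(K)])).
\end{tikzcd}
\]
The decisive feature is that the bottom row involves $R$ only through $\pi_0 R$: the stable module category and the ordinary module category appearing there are $\pi_0 R$-linear, so the Picard $\infty$-groupoids they give rise to are manifestly functions of $\pi_0 R$ alone.

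From there I would argue by descending induction on $\cF$. The base case is $\cF = \Orb(G)$: here every orbit, including the unit $G/G$, has been killed, so $\Mod(R_G)_\omega/\cF$ is the zero category and its Picard $\infty$-groupoid is contractible, hence trivially a function of $\pi_0 R$. For the inductive step, three of the four corners in the square above depend on $R$ only through $\pi_0 R$ (the top right by induction, the bottom two by the observation above), and since pullbacks of spaces are jointly functorial in their input diagrams, the top-left corner shares this property. Specialising to $\cF = \emptyset$ gives that $\cPic(\Mod(R_G))$ depends on $R$ only through $\pi_0 R$, so applying this to $R = \bS$ and $R = H\Z$, both with $\pi_0 = \Z$, produces an equivalence $\cPic(\Sp^G) \simeq \cPic(\Mod_{\Sp^G}(H\Z_G))$, and in particular the desired isomorphism on $\pi_0$.

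The main obstacle is to rigorously justify the coefficient version of Theorem~\ref{thm:pullbackintro} with the bottom row as written. One needs to check that the geometric fixed-point functor out of $\Mod(R_G)$ factors through an ordinary $\pi_0 R$-linear category, and correspondingly that the gluing functor $H\overline{\Phi}^K$ takes values in $\StMod(\pi_0(R)[W_G(K)])$ rather than some genuinely $R$-linear refinement. This is plausible because the stable module category already only remembers information after modding out by projectives, which should collapse the higher homotopy of $R$, but the precise identification requires some care. A secondary point is to arrange the identification of the bottom row naturally enough across the induction so that one obtains a coherent equivalence of the pullback squares, not merely a corner-by-corner match; once that is done, the induction itself is essentially formal.
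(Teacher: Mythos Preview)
Your approach matches the paper's: invoke the coefficient $1$-cartesian square and induct on $\cF$, using that the bottom row depends only on $\pi_0 R$. The paper's proof (Theorem~\ref{thm:linearisation}) is a one-sentence version of exactly this.

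There is one genuine slip. You argue that since ``pullbacks of spaces are jointly functorial,'' the full Picard space $\cPic(\Mod(R_G))$ depends only on $\pi_0 R$. But the square is only $1$-cartesian, not a pullback of spaces, and the conclusion is in fact false: already for $G=e$ one has $\pi_2\cPic(\Sp)=\pi_1\bS=\Z/2$ while $\pi_2\cPic(\Mod(H\Z))=\pi_1 H\Z=0$, so $\cPic(\Sp^G)$ and $\cPic(\Mod_{\Sp^G}(H\Z_G))$ cannot be equivalent as spaces. The fix is to run the induction on homotopy categories rather than on spaces: a $1$-cartesian square gives an honest pullback of homotopy categories (as the paper notes in the definition), the bottom rows and the maps between them are literally identical for $R=\bS$ and $R=H\Z$ since $\pi_0\bS\to\pi_0 H\Z$ is the identity on $\Z$, and the induction then yields an equivalence of homotopy categories of $\cPic(\Sp^G_\omega/\cF)$ and $\cPic(\Mod_\omega(H\Z_G)/\cF)$, hence the desired isomorphism on $\Pic=\pi_0$. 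Your stated concern about justifying the $\pi_0 R$-linear bottom row is precisely what the paper establishes: Theorem~\ref{thm:pullbackcoeff} produces the $R$-linear pullback square, and Theorem~\ref{thm:pic1cartesian} pastes on a further $1$-cartesian square replacing $R$ by $\pi_0 R$ along the bottom, so no separate argument is required.
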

Here $H\Z_G$, the inflation of $H\Z\in \Sp$ along $G\to e$, is the $\Z$-linearisation of the genuine sphere. It is a genuine $G$-spectrum with all geometric fixed points given by $H\Z$ (not to be confused with a Mackey functor Eilenberg-Mac Lane spectrum). The category $\Mod_{\Sp^G}(H\Z_G)$ is equivalent to the category of coherent Mackey functors with values in $\Mod_{\Sp}(H\Z) \simeq \cD(\Z)$, see \cite[Corollary 4.11]{patchkoriasanderswimmer}.
In that sense, Theorem \ref{thm:linearisationintro} should be regarded as a more algebraic perspective on the image of the dimension homomorphism \ref{eq:dimensionhom}, even though Mackey functors with values in $\cD(\Z)$ still involve complicated coherences.

Similar results have been obtained by Grodal-Smith \cite{grodalsmithoberwolfach} by different methods.

\subsection{Acknowledgements}

I am grateful to John Greenlees, Jesper Grodal, Drew Heard, Tyler Lawson, Akhil Mathew, Thomas Nikolaus, and Kaif Tan for very helpful discussions related to this project.

The author was funded by the Deutsche Forschungsgemeinschaft (DFG, German Research Foundation) under Germany’s Excellence Strategy EXC 2044 390685587, Mathematics M\"unster: Dynamics-Geometry-Structure, as well as under Project-ID 427320536 - SFB 1442.

\section{Preliminaries}

In this note, we work with the $\infty$-category $\cS^G$ (resp. $\cS^G_*$) of $G$-spaces (resp. pointed $G$-spaces), and the $\infty$-category $\Sp^G$ of genuine $G$-spectra. The perspective we take is rather axiomatic, indeed, we will use just the following facts. These are well-known and can be proven in each of the common models for genuine equivariant homotopy theory, see e.g. \cite{mandellmay}, \cite{greenleesmay}, \cite{schwede}, \cite{barwick}, \cite{barwicketal} or the appendix of \cite{hillhopkinsravenel}.

\begin{enumerate}
\item $\Sp^G$ is a stable, symmetric-monoidal $\infty$-category. There is a symmetric-monoidal functor $\Sigma^\infty: \cS^G_* \to \Sp^G$ from the $\infty$-category of pointed $G$-spaces, endowed with the smash product. $\Sigma^\infty$ admits a right adjoint $\Omega^\infty$ which commutes with filtered colimits and is faithful.
\item As a consequence of (1), $\Sigma^\infty$ preserves compact objects, and the orbits $\Sigma^\infty_+ G/H$ form a system of compact generators.
\item There are symmetric-monoidal functors $\Phi^H: \Sp^G\to \Sp$, which preserve colimits and satisfy $\Phi^H \Sigma^\infty X \simeq \Sigma^\infty X^H$.
\item For $X\in \Sp^G$ with $\Phi^H X \simeq 0$ for all $H\subsetneq K$, we have a natural equivalence $\Phi^K X \simeq X^K := \map_{\Sp^G}(\Sigma^\infty_+ G/K, X)$. In particular, if all $\Phi^K X \simeq 0$, then all $X^K\simeq 0$, so $X\simeq 0$. Thus, the $\Phi^K$ together detect equivalences.
\item The $\Sigma^\infty_+ G/H$ are all dualizable in $\Sp^G$. Since these are compact generators, and dualizable objects are closed under extensions and retracts, it follows that all compact objects in $\Sp^G$ are dualizable. Conversely, all dualizable objects are compact, since the unit $\bS_G = \Sigma^\infty_+ G/G$ is.
\end{enumerate}

For a presentably symmetric-monoidal $\infty$-category $\cC$, recall that we can form the Picard $\infty$-groupoid $\cPic(\cC)$. This is always small: If the unit of $\cC$ is $\kappa$-compact, it follows that all invertible objects are $\kappa$-compact, and since $\cC_\kappa$ is essentially small, it follows that $\cPic(\cC)$ is essentially small, too.
Thus, $\cPic(\cC)$ can be considered as a space (in fact, the infinite loop space of a spectrum $\mathfrak{pic}(\cC)$, due to the symmetric-monoidal structure), with $\pi_0(\cPic(\cC)) = \Pic(\cC)$, the usual Picard group of $\cC$, consisting of isomorphism classes of invertible objects. $\cPic(\cC)$ has higher homotopy groups as well: $\pi_1(\cPic(\cC))$ is given by homotopy classes of automorphisms of the unit of $\cC$, and for $n\geq 2$ $\pi_n(\cPic(\cC))$ agrees with $\pi_{n-1}\End_{\cC}(\unit)$.
The advantage of $\cPic(\cC)$ over the mere Picard group $\Pic(\cC)$ is that $\cPic$ commutes with limits of symmetric-monoidal $\infty$-categories, see \cite[Prop. 2.2.3]{mathewstojanoska}. This enables us to apply descent methods to the study of the Picard group.

Since $\cPic$ is functorial in symmetric-monoidal functors, geometric fixed points $\Phi^H: \Sp^G\to \Sp$ induce maps $\cPic(\Sp^G)\to \cPic(\Sp)$. This is just expressing the fact that for $X$ an invertible $G$-spectrum, $\Phi^H X$ is an invertible spectrum, i.e. a shift of the sphere spectrum.
A converse of this fact holds as well: Given a \emph{compact} $G$-spectrum $X\in \Sp^G_\omega$, then $X$ is invertible if and only if all its geometric fixed points $\Phi^H X$ are.
To see this, note that $X$ is dualizable, so we have a dual $DX$ with canonical evaluation map
\[
DX \otimes X\to \bS_G.
\]
The assertion that $X$ is invertible is equivalent to this being an equivalence. But since $\Phi^H$ is symmetric-monoidal, thus preserves duals, and all $\Phi^H$ together detect equivalences, this is equivalent to all $\Phi^H X$ being invertible.
Note that without the a priori assumption of compactness of $X$, knowing that the $\Phi^H X$ are invertible is not enough. For example, one can find $C_2$-spectra $X$ and $Y$ with $\Phi^{C_2} X \simeq 0$, $\Phi^e X \simeq \bS$, and $\Phi^e Y\simeq 0$, $\Phi^{C_2}Y\simeq \bS$, so their sum $X\oplus Y$ has all geometric fixed points given by invertible spectra. But $X$ and $Y$ as above can never be compact, and so in particular $X\oplus Y$ cannot be invertible.

The goal of this paper is to make this relationship between invertible objects of $\Sp^G_\omega$ and their geometric fixed points tighter, and to explicitly describe the data required to reconstruct $X$ out of the various $\Phi^H X$.

\section{Isotropy separation for compact objects}

\begin{definition}
We call $\cF\subseteq \Orb(G)$ a family of orbits of $G$, if whenever $G/H \in \cF$ and $H'\subseteq H$, we have $G/H'\in \cF$.
\end{definition}

For such a family, there are cofiber sequences in $\Sp^G$, functorial in $X$, of the form
\[
X_{\cF} \to X \to X_{\cF^c}
\]
where $\Phi^H X_{\cF^c} \simeq 0$ whenever $G/H\in \cF$, and $\Phi^H X_{\cF} \simeq 0$ whenever $G/H\not\in \cF$. This cofiber sequence is part of a \emph{recollement}, which allows one to describe the category of genuine $G$-spectra in terms of certain categories of $G$-spectra with isotropy supported at $\cF$ or away from $\cF$, arising as Bousfield localisations of $\Sp^G$. This is described in detail in \cite{quigleyshah}. Inductively, one can describe $\Sp^G$ in terms of a sequence of recollements which exhibits it as built from pieces associated to one isotropy subgroup at a time. For example, in the case of $G=C_p$, where there are only two subgroups, this description takes the form of a pullback diagram (cf. \cite[Theorem 6.24]{mathewnaumannnoel})
\begin{equation}
\label{eq:recollpullback}
\begin{tikzcd}
\Sp^{C_p} \rar\dar & \Fun(\Delta^1,\Sp)\dar{t}\\
\Fun(BC_p, \Sp) \rar& \Sp
\end{tikzcd}
\end{equation}
where the left vertical functor takes a $G$-spectrum $X$ to its underlying spectrum $\Phi^e X$ with the action by the Weyl group $W_{C_p}(e)=C_p$, and the top vertical functor takes $X$ to the arrow $\Phi^{C_p} X \to (\Phi^e X)^{tC_p}$. On the level of objects, this square precisely says that a $C_p$-spectrum $X$ can be recovered from this data, i.e. a spectrum $\Phi^{C_p} X$, a spectrum $\Phi^e X$ with $C_p$-action, and some map 
\begin{equation}
\label{eq:gluingmap}
\Phi^{C_p} X \to (\Phi^e X)^{tC_p}.
\end{equation}
On morphisms, this pullback square leads to the familiar Tate square on mapping spectra,
\begin{equation}
\label{eq:tatesquare}
\begin{tikzcd}
\map_{\Sp^{C_p}}(X,Y)\rar\dar & \underlinemap(X,Y)^{\Phi^{C_p}}\dar\\
\underlinemap(X,Y)^{hC_p} \rar & \underlinemap(X,Y)^{tC_p},
\end{tikzcd}
\end{equation}
valid for compact $X\in \Sp^G$. Here $\underlinemap$ denotes the internal Hom of $\Sp^{C_p}$. One should consider the upper right corner to measure maps on the part with isotropy $C_p$ (i.e. geometric $C_p$-fixed points), the lower left corner to measure maps on the part with trivial isotropy (i.e. geometric fixed points with respect to the trivial subgroup $e\subseteq C_p$), and the lower right corner to somehow control the gluing of these parts.

While very satisfying, this perspective is not well-suited to control $\Pic(\Sp^G)$: Of course, for $X$ to be invertible, it is necessary that $\Phi^eX$ and $\Phi^{C_p} X$ are invertible spectra, but in addition $X$ needs to be a compact object of $\Sp^G$. This translates to a condition on the map \eqref{eq:gluingmap} which seems hard to determine explicitly. This is closely related to the fact that the pieces $X_{\cF}$ and $X_{\cF^c}$ in the general isotropy separation sequence admit a description in terms of the classifying space $E\cF$ of the family $\cF$ of subgroups, which is basically never compact.

So we take a different route, and work with compact objects to begin with. In this picture, we will derive a pullback diagram different from \eqref{eq:recollpullback}, which looks much more like a direct categorification of \eqref{eq:tatesquare}. Specifically, in the special case of $G=C_p$, compact objects $X\in\Sp_{\omega}^{C_p}$ will correspond to a pair of compact objects $\Phi^e X\in \Fun(BC_p,\Sp)$ and $\Phi^{C_p} X\in \Sp$ as before, but now with an \emph{equivalence} between their images in a certain category which we view as a version of the stable module category with coefficients in the sphere. The corresponding pullback diagram on morphisms will directly recover the Tate square.

First recall the following construction:
\begin{definition}
For a stable $\infty$-category $\cB$ with stable full subcategory $\cA$, a map in $\cB$ is a \emph{mod $\cA$ equivalence}, if its fiber is equivalent to an object of $\cA$. The Verdier quotient $\cB/\cA$ is the Dwyer-Kan localization of $\cB$ with respect to all mod $\cA$ equivalences.
\end{definition}

\begin{lemma}
\label{lem:mapsinquotient}
Mapping spectra in $\cB/\cA$ admit the following two descriptions:
\begin{enumerate}
\item $\map_{\cB/\cA}(X,Y)\simeq \colim_{X'\xto{\sim} X} \map_{\cB}(X',Y)$, where the colimit ranges over (op of) the cofiltered diagram of all mod $\cA$ equivalences $X'\to X$ over $X$.
\item $\map_{\cB/\cA}(X,Y) \simeq \colim_{Y\xto{\sim} Y'} \map_{\cB}(X,Y')$, where the colimit ranges over the filtered diagram of all mod $\cA$ equivalences $Y\to Y'$ under $Y$.
\end{enumerate}
\end{lemma}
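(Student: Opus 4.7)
The plan is to verify that the class $W$ of mod $\cA$ equivalences in $\cB$ admits a calculus of right and left fractions, and then to invoke the standard formula for mapping spaces in a Dwyer-Kan localisation under such a calculus.

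First I would observe that stability of $\cA$ (in particular its closure under extensions and shifts) makes $W$ closed under composition and forces two-out-of-three: both follow at once from the fibre sequence $\mathrm{fib}(f) \to \mathrm{fib}(gf) \to \mathrm{fib}(g)$ associated to a composable pair. Next, I would verify the right Ore condition needed for (1): given $s: Y' \to Y$ in $W$ and any $f: X \to Y$, the pullback $P := Y' \times_Y X$ satisfies $\mathrm{fib}(P \to X) \simeq \mathrm{fib}(s) \in \cA$ by stability, so $P \to X$ lies in $W$ and sits in the required square. For the coequaliser part, given parallel maps $f, g: X \to Y$ with $sf \simeq sg$, the difference $f - g$ factors through some $h: X \to \mathrm{fib}(s)$, and then $X' := \mathrm{fib}(h) \to X$ lies in $W$ (its fibre is a shift of $\mathrm{fib}(s) \in \cA$) and pre-equalises $f$ and $g$. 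These conditions also imply that the $\infty$-category of mod $\cA$ equivalences into $X$ is cofiltered, since pullbacks provide common refinements of finitely many parallel $W$-maps.

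With this setup in place, I would invoke the standard result that under a right calculus of fractions the mapping spectrum $\map_{\cB[W^{-1}]}(X,Y)$ is computed as the filtered colimit of $\map_{\cB}(X', Y)$ over the opposite of the cofiltered slice of mod $\cA$ equivalences into $X$. Since by definition $\cB/\cA = \cB[W^{-1}]$, and since this localisation is again stable (the relevant $W$ is closed under shifts), this yields (1) on the level of mapping spectra. Description (2) follows by the dual argument: in a stable $\infty$-category, the class of maps with fibre in $\cA$ agrees with the class of maps with cofibre in $\cA$ (they differ by a shift, and $\cA$ is closed under shifts), so passing to $\cB^{\mathrm{op}}$ converts the right fractions argument above into a left fractions argument under $Y$.

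The main obstacle is really just the bookkeeping around the coequaliser axiom; once the pullback construction in the first paragraph is in hand, everything else is formal. A minor technical point worth attention is the claim that the slice of mod $\cA$ equivalences is cofiltered as an $\infty$-category (not merely at the level of homotopy categories), but this reduces to the existence of finite limits in that slice, which the closure properties above already supply.
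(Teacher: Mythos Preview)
Your proposal is correct and essentially recapitulates the argument the paper defers to: the paper's own proof is simply a citation to \cite[Theorem I.3.3]{nikolausscholze}, whose proof proceeds by exactly the calculus-of-fractions verification you outline (modulo a small notational slip in your cancellation clause, where the map you call $s$ must have domain $Y$ rather than codomain $Y$). So your approach and the paper's coincide, with yours spelling out what the citation contains.
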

\begin{proof}
See \cite[Theorem I.3.3]{nikolausscholze}, which proves (2) as (ii). (1) follows analogously, or by replacing $\cB$ by its opposite category.
\end{proof}

\begin{corollary}
\label{cor:kernelquotient}
The maps in $\cB$ which become equivalences in $\cB/\cA$ are precisely those whose fiber is a retract of an object in $\cA$. In particular, if $\cA$ is closed under retracts in $\cB$, the maps that become equivalences in $\cB/\cA$ are precisely the equivalences mod $\cA$. 
\end{corollary}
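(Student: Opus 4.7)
The plan is to first reduce the claim to a statement about which objects of $\cB$ become zero in $\cB/\cA$. The Verdier quotient functor $\cB \to \cB/\cA$ is exact, so for $f: X \to Y$ the fiber $F$ of $f$ in $\cB$ maps to the fiber of (the image of) $f$ in $\cB/\cA$. Hence $f$ is an equivalence in $\cB/\cA$ if and only if $F$ becomes zero there. It therefore suffices to show: an object $F \in \cB$ is zero in $\cB/\cA$ if and only if $F$ is a retract of some object of $\cA$.

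For the easier direction, suppose $F$ is a retract of $A \in \cA$. Since $\cB$ is stable, retracts split, so $A \simeq F \oplus C$ for some $C \in \cB$. The map $A \to 0$ has fiber $A$, which lies in $\cA$; hence $A \to 0$ is a mod $\cA$ equivalence and $A \simeq 0$ in $\cB/\cA$. Being a summand of $A$, $F$ is zero in $\cB/\cA$ as well.

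For the converse, I would apply Lemma \ref{lem:mapsinquotient}(2) to the endomorphism spectrum
\[
\map_{\cB/\cA}(F, F) \simeq \colim_{s: F \xto{\sim} F'} \map_\cB(F, F'),
\]
where the indexing diagram is filtered. If $F \simeq 0$ in $\cB/\cA$, then $\id_F$ is nullhomotopic in this mapping spectrum, and since $\pi_0$ commutes with filtered colimits, there exists a single mod $\cA$ equivalence $s: F \to F'$ along which $\id_F$ becomes null. The image of $\id_F$ at that stage is just $s$ itself, so $s$ is nullhomotopic in $\cB$. The fiber of a null map in a stable $\infty$-category splits, yielding $\text{fib}(s) \simeq F \oplus F'[-1]$, and by assumption $\text{fib}(s)$ is equivalent to an object of $\cA$. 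Therefore $F$ is a retract of an object of $\cA$.

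The final assertion is then immediate: if $\cA$ is closed under retracts in $\cB$, being a retract of an object of $\cA$ is the same as being in $\cA$, so the fiber of an equivalence in $\cB/\cA$ must itself lie in $\cA$. The main subtlety is the colimit argument extracting a single witness $s$, which depends crucially on the filteredness of the indexing diagram provided by Lemma \ref{lem:mapsinquotient}; the rest is a direct manipulation of fiber sequences in a stable $\infty$-category.
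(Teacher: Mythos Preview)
Your proof is correct and follows essentially the same route as the paper: reduce to characterizing objects that become zero, then use the filtered-colimit description of mapping spectra from Lemma~\ref{lem:mapsinquotient} to find a single nullhomotopic mod~$\cA$ equivalence, whose fiber exhibits the object as a retract of something in~$\cA$. The only difference is that you invoke part~(2) of Lemma~\ref{lem:mapsinquotient} (colimit over $F\xto{\sim}F'$) whereas the paper uses part~(1) (colimit over $X'\xto{\sim}X$); these are dual and lead to the same conclusion, and your version spells out the filteredness and fiber-splitting steps more explicitly than the paper does.
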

\begin{proof}
Since the functor $\cB\to \cB/\cA$ is exact, a map goes to an equivalence precisely if its fiber is mapped to $0$. An object is zero precisely if its identity agrees with its zero endomorphism, by the description from Lemma \ref{lem:mapsinquotient} this happens to $X$ precisely if there is an $X'$ such that the zero map $X'\to X$ is a mod $\cA$ equivalence, i.e. if $X$ is a summand of an object in $\cA$.
\end{proof}

\begin{lemma}
\label{lem:quotientmonoidal}
Let $\cB$ be symmetric-monoidal, and let $\cA$ be a stable full subcategory. There exists an essentially unique symmetric monoidal structure on $\cB/\cA$ and the functor $\cB\to \cB/\cA$ precisely if the subcategory $\cA$ is an ideal, i.e. for any $X\in \cA$ and $Y\in \cB$, we have $X\otimes Y\in \cA$.
\end{lemma}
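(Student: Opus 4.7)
The plan is to exhibit $\cB/\cA$ as a Dwyer-Kan localization and apply the universal property of symmetric monoidal localizations. Specifically, I would invoke the standard fact (arising from the operadic localization machinery, see e.g.\ Hinich, or Lurie's \emph{Higher Algebra}) that if $\cC$ is a symmetric monoidal $\infty$-category and $W$ a class of morphisms in $\cC$, then $\cC[W^{-1}]$ admits an essentially unique symmetric monoidal structure making $\cC\to \cC[W^{-1}]$ symmetric monoidal precisely if $W$ is stable under tensoring with arbitrary objects of $\cC$. Given this black box, the lemma reduces to checking that the class $W$ of mod $\cA$ equivalences is closed under the tensor action of $\cB$ iff $\cA$ is an ideal.

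For the ``if'' direction, assume $\cA$ is an ideal. Since $\cB$ is stable and the tensor product is exact in each variable, for a mod $\cA$ equivalence $f\colon X\to X'$ with fiber $F\in \cA$ and any $Y\in \cB$, the fiber of $f\otimes Y$ is $F\otimes Y$. The ideal hypothesis gives $F\otimes Y\in \cA$, so $f\otimes Y$ is again a mod $\cA$ equivalence. The hypothesis of the universal property holds, yielding the desired symmetric monoidal structure and its essential uniqueness.

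For the ``only if'' direction, suppose $\cB/\cA$ inherits a compatible symmetric monoidal structure. For $X\in \cA$, the map $0\to X$ is a mod $\cA$ equivalence, so $X$ is zero in $\cB/\cA$; symmetric monoidality of $\cB\to \cB/\cA$ then forces $X\otimes Y$ to be zero in $\cB/\cA$ for every $Y\in \cB$. By Corollary \ref{cor:kernelquotient}, this means $X\otimes Y$ is a retract of an object of $\cA$. Interpreting ``ideal'' up to retracts (or equivalently, replacing $\cA$ by its retract closure $\overline{\cA}$ in $\cB$, which does not change $\cB/\cA$), this recovers the ideal condition.

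The main obstacle will be invoking the correct form of the symmetric monoidal Dwyer-Kan localization theorem: the statement is folklore but requires careful handling of the operadic formalism to guarantee both existence and essential uniqueness at the $\infty$-categorical level. Once that black box is in place, both implications are formal, modulo the minor retract-closure subtlety noted in the converse.
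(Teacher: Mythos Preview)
Your proposal is correct and follows essentially the same route as the paper, which simply cites \cite[Theorem I.3.6]{nikolausscholze}; that result is precisely the symmetric monoidal Dwyer--Kan localization statement you invoke, and your reduction to checking that mod~$\cA$ equivalences are stable under tensoring is exactly the content of that theorem. Your observation about the retract-closure subtlety in the converse is a fair point not made explicit in the paper's formulation.
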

\begin{proof}
This is \cite[Theorem I.3.6]{nikolausscholze}.
\end{proof}

\begin{definition}
For a family $\cF$ of orbits of $G$, we let
\[
\Sp^G_{\omega}/\cF
\]
denote the Verdier quotient of $\Sp^G_{\omega}$ by the thick subcategory $\langle \Sigma^\infty_+ G/H\rangle_{G/H\in \cF}$ generated by the suspension spectra of orbits in $\cF$ (which are always compact) under cofibers, fibers and retracts. 
\end{definition}

Since $\cF$ is a family and the product of two $G$-orbits admits a double coset decomposition, one sees that Lemma \ref{lem:quotientmonoidal} makes $\Sp^G_\omega/\cF$ into a symmetric-monoidal localization of $\Sp^G_\omega$.

\begin{lemma}
\label{lem:geometricfixedpointscorepresented}
Let $\cF$ be a family of orbits, and let $G/K$ be an orbit not contained in $\cF$, but such that all $G/K'$ with $K'\subsetneq K$ are contained in $\cF$. For $X\in \Sp^G_\omega$ we then have
\[
\map_{\Sp^G_\omega/\cF}(\Sigma^\infty_+ G/K, X) \simeq \Phi^K X
\]
\end{lemma}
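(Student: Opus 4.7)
The plan is to apply Lemma \ref{lem:mapsinquotient}(2) to express the mapping spectrum as a filtered colimit over mod $\cA$ equivalences $X \to X'$, and then evaluate using a cofinal subsystem built from the universal space $E\cF$. The key point is that $\Phi^K$ vanishes on $\cA$: for any $G/H \in \cF$, a $K$-fixed coset $gH \in (G/H)^K$ would give $g^{-1}Kg \subseteq H$, and the family property applied to $G/H \in \cF$ then forces $G/K = G/g^{-1}Kg \in \cF$, contradicting the hypothesis. Hence $(G/H)^K = \emptyset$, so $\Phi^K \Sigma^\infty_+ G/H \simeq 0$, and by exactness $\Phi^K A \simeq 0$ for all $A \in \cA$. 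Consequently
\[
\map_{\Sp^G_\omega/\cF}(\Sigma^\infty_+ G/K, X) \simeq \colim_{X \to X'} (X')^K,
\]
and every mod $\cA$ equivalence $X \to X'$ induces an equivalence $\Phi^K X \simeq \Phi^K X'$.

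Next I would introduce a candidate cofinal subsystem using the universal $G$-space $E\cF$ associated to $\cF$, filtered by finite sub-$G$-CW complexes $E\cF^{(n)}$. Set
\[
X^{(n)} := \cof\bigl(X \otimes \Sigma^\infty_+ E\cF^{(n)} \to X\bigr).
\]
Since $\cA$ is a tensor ideal containing $\Sigma^\infty_+ G/H$ for all $G/H \in \cF$, and each $E\cF^{(n)}$ is built from such orbits by finitely many cell attachments, $X \otimes \Sigma^\infty_+ E\cF^{(n)} \in \cA$, so $X \to X^{(n)}$ is a mod $\cA$ equivalence. Because $\Sigma^\infty_+ G/K$ is compact and $E\cF = \colim_n E\cF^{(n)}$, the subsystem computes to
\[
\colim_n (X^{(n)})^K \simeq \Bigl(\colim_n X^{(n)}\Bigr)^K = (X \otimes \widetilde{E\cF})^K,
\]
where $\widetilde{E\cF} := \cof(\Sigma^\infty_+ E\cF \to \bS_G)$ has $\Phi^H \widetilde{E\cF} \simeq \bS$ for $G/H \notin \cF$ and $\Phi^H \widetilde{E\cF} \simeq 0$ for $G/H \in \cF$. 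In particular $\Phi^H(X \otimes \widetilde{E\cF}) \simeq 0$ for all $H \subsetneq K$, so property (4) of the preliminaries gives $(X \otimes \widetilde{E\cF})^K \simeq \Phi^K(X \otimes \widetilde{E\cF}) \simeq \Phi^K X$, matching the desired value.

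The main obstacle is cofinality of $\{X^{(n)}\}$. Given any mod $\cA$ equivalence $X \to X'$ with fiber $A \in \cA$, I would show the composite $A \to X \to X \otimes \widetilde{E\cF}$ is null by establishing $\map_{\Sp^G}(A, X \otimes \widetilde{E\cF}) \simeq 0$. On a generator $A = \Sigma^\infty_+ G/H$ with $G/H \in \cF$, this mapping spectrum equals $(X \otimes \widetilde{E\cF})^H$, which vanishes by iteratively applying property (4) using $\Phi^{H'}(X \otimes \widetilde{E\cF}) \simeq 0$ for every $H' \subseteq H$; closure of $\cA$ under fibers, cofibers, and retracts propagates the vanishing. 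Consequently $A \to X$ factors through $X \otimes \Sigma^\infty_+ E\cF = \colim_n X \otimes \Sigma^\infty_+ E\cF^{(n)}$, and by compactness of $A$ through some finite stage; taking cofibers of the resulting map of cofiber sequences produces a map $X' \to X^{(n)}$ under $X$, confirming cofinality.
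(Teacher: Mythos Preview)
Your approach is genuinely different from the paper's. The paper works directly with the abstract colimit $\colim_{X\to X'} X'$ in $\Sp^G$: it observes that $\Phi^H$ of this colimit agrees with $\Phi^H X$ for $G/H\notin\cF$, and shows by hand that $(\colim_{X'} X')^H$ vanishes for $G/H\in\cF$ (any class is represented in some $(X')^H$ by a map $S^n\otimes\Sigma^\infty_+ G/H\to X'$, and passing to the cofiber kills it). Then property (4) from the preliminaries finishes. Your route is more geometric: you identify the colimit with $X\otimes\widetilde{E\cF}$ via an explicit cofinal $\mathbb{N}$-indexed subsystem built from skeleta of $E\cF$. This has the pleasant side effect of explicitly recognizing $\colim_{X'} X'$ as the familiar $\cF$-isotropy localisation of $X$, whereas the paper's argument is more elementary and avoids introducing $E\cF$.

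There is, however, a genuine gap in your cofinality step. Producing for each $X'$ a single map $X'\to X^{(n)}$ under $X$ only shows the slice $\mathbb{N}\times_J J_{X'/}$ is nonempty; $\infty$-categorical cofinality requires it to be weakly contractible, and the indexing category $J$ is not a poset. Fortunately, the stronger vanishing you prove, $\map_{\Sp^G}(A, X\otimes\widetilde{E\cF})\simeq 0$, is exactly what is needed to close this: it says the space of maps $X'\to X\otimes\widetilde{E\cF}$ under $X$ is contractible, so any two factorizations $X'\to X^{(n)}$, $X'\to X^{(m)}$ become homotopic after increasing $n,m$ (by compactness of $X'$), and similarly for higher coherences. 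Equivalently, the slice is the Grothendieck construction of $n\mapsto \map_{X/}(X',X^{(n)})$, whose homotopy colimit is $\map_{X/}(X', X\otimes\widetilde{E\cF})\simeq *$. Either way you should spell this out; ``confirming cofinality'' after exhibiting one map is not enough.
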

\begin{proof}
By Lemma \ref{lem:mapsinquotient}, we can describe the mapping spectrum in question as a filtered colimit
\[
\colim_{X \xto{\sim} X'} \map_{\Sp^G_\omega}(\Sigma^\infty_+ G/H, X').
\]
In the larger category $\Sp^G$, which contains $\Sp^G_\omega$ as a full subcategory, $\Sigma^\infty_+ G/K$ is compact and we can thus write the above diagram as
\[
\map_{\Sp^G}(\Sigma^\infty_+ G/K, \colim_{X\xto{\sim} X'} X') \simeq \big(\colim_{X\xto{\sim} X'} X'\big)^K,
\]
where the colimit is taken over all maps $X\to X'$ under $X$ with fiber in $\langle\Sigma^\infty_+ G/H\rangle_{G/H\in \cF}$.

The map $X\to \colim_{X\xto{\sim} X'} X'$ induces an isomorphism on geometric fixed points for all subgroups not contained in $\cF$, because objects of the thick subcategory $\langle \Sigma^\infty_+ G/H\rangle_{H\in \cF}$ have trivial geometric fixed points for groups not in $\cF$.
Furthermore, the $H$-fixed points $(\colim_{X\xto{\sim} X'} X')^H \simeq \colim_{X\xto{\sim} X'} (X')^H$ for every $H\in \cF$ vanish.
We check this on homotopy groups: Given an element $\alpha\in \pi_n(\colim_{X\xto{\sim} X'} (X')^H)$, it is represented by a map $S^n\otimes \Sigma^\infty_+ (G/H) \to X'$ for some mod $\langle G/H\rangle_{H\in \cF}$ equivalence $X\to X'$. But then the cofiber
\[
S^n\otimes \Sigma^\infty_+ G/H \to X' \to X''
\]
also gives an object $X\to X''$ in the filtered colimit diagram, and $\alpha$ is evidently zero in $\pi_n((X'')^H)$.

It follows immediately from the usual definition of geometric fixed points that a $G$-spectrum $Z$ whose fixed points vanish for all proper subgroups $K'\subsetneq K$ has $Z^K \simeq \Phi^K Z$. In our situation, this implies
\[
\map_{\Sp^G}(\Sigma^\infty_+ G/K, \colim_{X\xto{\sim} X'} X') \simeq \Phi^K \colim_{X\xto{\sim} X'} X' \simeq \Phi^K X,
\]
as desired.
\end{proof}

\begin{lemma}
\label{lem:geomfixedpointsiso}
The functor $\Phi^K: \Sp^G_\omega/\cF \to \Sp_\omega$ admits a refinement $\Sp^G_\omega/\cF \to \Fun(BW_G(K), \Sp_\omega)$, and this induces equivalences
\[
\map_{\Sp^G_\omega/\cF}(X,Y) \to \map_{\Fun(BW_G(K), \Sp_\omega)}(\Phi^K X, \Phi^K Y)
\]
whenever $X$ or $Y$ is contained in the thick subcategory $\langle \Sigma^\infty_+ G/K\rangle$ generated by $\Sigma^\infty_+ G/K$ in $\Sp^G_\omega/\cF$. 
\end{lemma}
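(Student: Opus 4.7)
The plan has two main steps: constructing the refinement, and then verifying the mapping-spectrum claim by a thick-subcategory reduction.

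For the refinement, I would use that $K$ is normal in $N_G(K)$, so the right-translation action of $N_G(K)$ on $G/K$ factors through $W_G(K)$; hence $\Sigma^\infty_+ G/K$ lifts naturally to an object of $\Fun(BW_G(K), \Sp^G)$. By Lemma \ref{lem:geometricfixedpointscorepresented}, the functor $\Phi^K$ on $\Sp^G_\omega/\cF$ is corepresented by this same object, so the $W_G(K)$-action on the corepresenter promotes $\Phi^K$ to the desired refinement $\Sp^G_\omega/\cF \to \Fun(BW_G(K), \Sp_\omega)$. (The underlying functor $\Phi^K$ indeed descends through the Verdier quotient, because $\Phi^K \Sigma^\infty_+ G/H \simeq \Sigma^\infty_+ (G/H)^K$ vanishes for $G/H\in \cF$: since $K$ is minimal not in $\cF$, it is not subconjugate to any such $H$.)

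For the mapping-spectrum claim, my strategy is that both sides are exact bifunctors in $X$ and $Y$, so for fixed $Y$ (resp.\ $X$) the subcategory of $X$ (resp.\ $Y$) on which the comparison is an equivalence is thick. It therefore suffices to verify two generating cases. When $X = \Sigma^\infty_+ G/K$, Lemma \ref{lem:geometricfixedpointscorepresented} identifies the LHS with $\Phi^K Y$, and since $\Phi^K \Sigma^\infty_+ G/K \simeq \Sigma^\infty_+ (G/K)^K \simeq \Sigma^\infty_+ W_G(K)$ is the free $W_G(K)$-spectrum on a point, the RHS is also the underlying spectrum $\Phi^K Y$; the comparison unravels to the identity.

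The case $Y = \Sigma^\infty_+ G/K$ I would reduce to the previous one via self-duality. Since $G$ and $W_G(K)$ are finite, $\Sigma^\infty_+ G/K$ and $\Sigma^\infty_+ W_G(K)$ are self-dual in $\Sp^G$ and $\Fun(BW_G(K), \Sp)$ respectively by Wirthm\"uller duality, and self-duality persists in the symmetric monoidal localization $\Sp^G_\omega/\cF$. Every object of $\Sp^G_\omega/\cF$ is the image of a compact, hence dualizable, $G$-spectrum, so in particular $X$ is dualizable, and I would compute
\[
\map(X, \Sigma^\infty_+ G/K) \simeq \map(\Sigma^\infty_+ G/K, DX) \simeq \Phi^K(DX) \simeq D(\Phi^K X)
\]
using self-duality, Lemma \ref{lem:geometricfixedpointscorepresented}, and symmetric monoidality of $\Phi^K$. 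A parallel computation for the RHS using self-duality of $\Sigma^\infty_+ W_G(K)$ (equivalently, freeness/cofreeness) also yields $D(\Phi^K X)$, and the comparison map matches the identifications. The main obstacle is less conceptual than bookkeeping: the computations on objects are clear, but carrying them out coherently at the $\infty$-categorical level, tracking the comparison through Wirthm\"uller duality and the free/cofree identification alongside Lemma \ref{lem:geometricfixedpointscorepresented}, is where the care is needed.
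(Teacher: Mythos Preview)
Your proposal is correct and follows essentially the same approach as the paper: construct the refinement via the natural $W_G(K)$-action on $G/K$, verify the case $X=\Sigma^\infty_+ G/K$ using Lemma~\ref{lem:geometricfixedpointscorepresented} and the freeness of $\Sigma^\infty_+ W_G(K)$, extend by a thick-subcategory argument, and handle the $Y$-case by duality using that $\Sigma^\infty_+ G/K$ is self-dual and all objects in the quotient are dualizable. The paper phrases the duality step slightly more economically---it simply replaces $\map(X,Y)$ by $\map(DY,DX)$ on both sides for arbitrary $Y\in\langle\Sigma^\infty_+ G/K\rangle$, thereby reducing directly to the already-established $X$-case---but your more explicit chain of identifications for the generator $Y=\Sigma^\infty_+ G/K$ amounts to the same thing.
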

\begin{proof}
$G/K$, considered as a $G$-space, admits a $W_G(K)$-action, i.e. a refinement to $\Fun(BW_G(K), \cS^G)$, where $\cS^G$ denotes the $\infty$-category of $G$-spaces. The desired refinement of $\Phi^K$ is then obtained by functoriality. For the claim about mapping spectra, we first observe that this is true for $X = \Sigma^\infty_+ G/K$. Indeed, $\Phi^K \Sigma^\infty_+ G/K = \Sigma^\infty_+ (G/K)^K = \Sigma^\infty_+ W_G(K)$ is free as a spectrum with $W_G(K)$-action, and thus
\[
\map_{\Sp^G_\omega/\cF}(\Sigma^\infty_+ G/K,Y) \to \map_{\Fun(BW_G(K), \Sp_\omega)}(\Phi^K \Sigma^\infty_+ G/K, \Phi^K Y) \simeq \Phi^K Y
\]
is seen to be the equivalence from Lemma \ref{lem:geometricfixedpointscorepresented}. Now note that the collection of all $X$ for which this map is an equivalence is certainly a thick subcategory since both sides are exact functors in $X$, so since it contains $\Sigma^\infty_+ G/K$, it contains all of $\langle \Sigma^\infty_+ G/K\rangle$.

For the corresponding statement where $Y$ is in the thick subcategory generated by $\Sigma^\infty_+ G/K$, we use duality: Every object of $\Sp^G_\omega$ is dualizable by equivariant Spanier-Whitehead duality, and since the functor to $\Sp^G_\omega/\cF$ is essentially surjective and symmetric-monoidal, this holds true in $\Sp^G_\omega/\cF$ as well. Also, $\Phi^K$ is symmetric-monoidal, so in particular preserves duals. It follows that we can replace $\map(X,Y)$ with $\map(DY,DX)$ on both sides, and since $D(\Sigma^\infty_+ G/K) \simeq \Sigma^\infty_+ G/K$, this reduces us to the previously established statement.
\end{proof}

We now analyze a relative situation, where we consider two families $\cF$, $\cF\cup \{G/K\}$ which only differ by one orbit. More precisely, for a family of orbits $\cF$ and $G/K\not\in \cF$ with all $G/K'$ for $K'\subsetneq K$ contained in $\cF$, $\cF\cup \{G/K\}$ is also a family of orbits. Observe that we then can consider $\Sp^G_\omega/\cF\cup\{G/K\}$ a localisation of $\Sp^G_\omega/\cF$, namely as the Verdier quotient by the thick subcategory $\langle \Sigma^\infty_+ G/K\rangle$ generated by $\Sigma^\infty_+ G/K$.

We are now in the abstract situation of the following lemma:

\begin{lemma}
\label{lem:pullbackquotient}
Let $\cA\subseteq \cB$ be a stable full subcategory of the stable $\infty$-category $\cB$, and let $F: \cB \to \cB'$ be an exact functor. Assume that 
\begin{enumerate}
\item $F$ induces an equivalence $\map_{\cB}(X,Y) \to \map_{\cB'}(FX,FY)$ whenever $X\in \cA$ or $Y\in \cA$. In particular, $F|_{\cA}$ is fully faithful.
\item $\cA$ is closed under retracts in $\cB$, and the essential image $F(\cA)$ is closed under retracts in $\cB'$.
\end{enumerate}
Then we have a pullback diagram of Verdier quotients
\[
\begin{tikzcd}
\cB \rar\dar{F} & \cB/\cA\dar\\
\cB'\rar & \cB'/F(\cA)
\end{tikzcd}
\]
\end{lemma}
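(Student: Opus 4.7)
The plan is to form the pullback $\cP := (\cB/\cA) \times_{\cB'/F(\cA)} \cB'$ of $\infty$-categories, with its canonical comparison functor $G \colon \cB \to \cP$ induced by $\cB \to \cB/\cA$ and $F$, and to show $G$ is an equivalence by verifying fully-faithfulness on mapping spectra and essential surjectivity on objects.

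For fully-faithfulness, fix $X, Y \in \cB$; the claim is that the square of mapping spectra
\[
\begin{tikzcd}
\map_{\cB}(X,Y) \rar \dar & \map_{\cB/\cA}(X,Y) \dar \\
\map_{\cB'}(FX,FY) \rar & \map_{\cB'/F(\cA)}(FX,FY)
\end{tikzcd}
\]
is a pullback, equivalently that the induced map on horizontal cofibers is an equivalence. By Lemma \ref{lem:mapsinquotient}(2), and using that $\map_\cB(X,-)$ takes cofiber sequences to fiber sequences, the top horizontal cofiber computes as the filtered colimit $\colim_{Y \to Y'} \map_\cB(X, \cof(Y \to Y'))$ indexed over mod $\cA$ equivalences $Y \to Y'$ (so $\cof(Y \to Y') \in \cA$), with the analogous description on the bottom over mod $F(\cA)$ equivalences out of $FY$. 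Condition (1) identifies these diagrams term by term using the equivalence $F|_\cA \colon \cA \simeq F(\cA)$ (which follows from (1) and (2)). To promote this to an equivalence of colimits, I would verify cofinality of the induced functor between the filtered indexing categories: given any mod $F(\cA)$ equivalence $FY \to Z$ with cofiber $C \simeq F\tilde C$, condition (1) lifts the fiber map $F(\Omega\tilde C) \to FY$ uniquely to $\Omega\tilde C \to Y$ in $\cB$, and then $Y' := \cof(\Omega\tilde C \to Y)$ is a mod $\cA$ equivalence with $FY' \simeq Z$; an analogous lifting on morphisms upgrades this objectwise essential surjectivity to genuine cofinality.

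For essential surjectivity of $G$, given $(\bar Y, W, \phi) \in \cP$, first lift $\bar Y$ to some $X_0 \in \cB$. Then $\phi$, composed with the canonical identification $[FX_0] \simeq [\bar Y]$ in $\cB'/F(\cA)$, yields an isomorphism $[FX_0] \simeq [W]$ there, and by Corollary \ref{cor:kernelquotient} together with condition (2), this is represented by mod $F(\cA)$ equivalences $FX_0 \to Z$ and $W \to Z$ in $\cB'$. Applying the lifting construction from the previous paragraph twice—first modifying $X_0$ into some $X_1 \in \cB$ with $FX_1 \simeq Z$, then further modifying $X_1$ into $X_2 \in \cB$ with $FX_2 \simeq W$—produces the desired preimage; since each modification attaches only objects of $\cA$ along their lifted attaching maps, the image $\bar{X_2}$ in $\cB/\cA$ agrees with $\bar{X_0} = \bar Y$, so $G(X_2)$ hits $(\bar Y, W, \phi)$ up to the ambient compatibility checks.

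The main technical obstacle will be making cofinality precise in the fully-faithful step: condition (1) straightforwardly produces the objectwise lift of indices, but verifying that slices in the comparison of indexing categories are weakly contractible requires the analogous lifting on morphisms, which is again afforded by condition (1) but demands some care. Tracking that the isomorphism produced by the two-step construction in the essential surjectivity step genuinely matches $\phi$ (rather than differing by an automorphism of $\bar Y$ or of $W$) is similarly routine but must be checked.
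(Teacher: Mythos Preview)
Your proposal is correct and follows essentially the same approach as the paper: prove the comparison functor is fully faithful by identifying horizontal (co)fibers of the mapping-spectrum square, and then prove essential surjectivity by lifting a zigzag of mod-$F(\cA)$ equivalences through $F$ using condition~(1). The paper uses the dual description of Lemma~\ref{lem:mapsinquotient} (colimit over $X'\to X$ rather than $Y\to Y'$) and compares fibers rather than cofibers, but this is an inessential variation.

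One remark on the step you flag as the ``main technical obstacle'': the cofinality you need is actually an \emph{equivalence} of indexing categories, so no slice-contractibility argument is required. Taking fibers identifies the category of mod-$\cA$ equivalences under $Y$ with the slice $\cA_{/Y}$, and likewise mod-$F(\cA)$ equivalences under $FY$ with $F(\cA)_{/FY}$; condition~(1) (applied with the $\cA$-object in the source slot) together with the equivalence $F|_{\cA}\colon \cA \xrightarrow{\ \sim\ } F(\cA)$ shows that the induced functor $\cA_{/Y}\to F(\cA)_{/FY}$ is an equivalence. This is what the paper means by the terse ``these diagrams agree,'' and it removes the need for the more delicate cofinality verification you outline.
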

\begin{proof}
We first check that the functor from $\cB$ into the pullback is fully faithful. This amounts to checking that the diagram
\[
\begin{tikzcd}
\map_{\cB}(X,Y) \rar\dar & \colim_{X' \xto{\sim} X}\map_{\cB}(X',Y)\dar\\
\map_{\cB'}(FX, FY) \rar & \colim_{X' \xto{\sim} FX} \map_{\cB'}(X', FY)
\end{tikzcd}
\]
is a pullback diagram. We check that the map between the horizontal fibers is an equivalence. The fiber of the upper horizontal map admits a description as
\[
\colim_{(X\to A) \in \cA_{X/}} \map_{\cB}(A, Y)
\]
where we have exploited that an equivalence mod $\cA$ $X'\to X$ is the fiber of its cofiber $X\to A$, $A\in \cA$. Similarly, the fiber of the lower horizontal map admits a description as
\[
\colim_{(FX \to A)\in F\cA_{FX/}} \map_{\cB'}(A, FY).
\]
By the first assumption, these diagrams agree.

We now show the functor from $\cB$ to the pullback is essentially surjective. We thus have to show that, given an object $b\in \cB$ (representing some object in $\cB/\cA$) and an object $b'\in \cB'$, together with an equivalence between $b'$ and $Fb$ in $\cB'/\cA$, we can find a new object $\widetilde{b'}$ in $\cB$, lifting $b'$ under $F$, and an equivalence between $\widetilde{b'}$ and $b$ in $\cB/\cA$ lifting the previously given equivalence. Since equivalences in $\cB/\cA$ are precisely zigzags of maps with fiber in $\cA$ by Corollary \ref{cor:kernelquotient}, we are reduced to check the following two facts:

\begin{enumerate}
\item Given an object $b'\in \cB'$, an object $b\in \cB$, and a map $Fb \to b'$ in $\cB'$ with fiber in $F\cA$, there is a map $b\to \widetilde{b'}$ in $\cB$, with fiber in $\cA$, lifting the map $Fb\to b'$.
\item Given an object $b'\in \cB'$, an object $b\in \cB$, and a map $b'\to Fb$ in $\cB'$ with fiber in $F\cA$, there is a map $\widetilde{b'}\to b$ in $\cB$, with fiber in $\cA$, lifting the map $Fb\to b'$.
\end{enumerate}

In the first situation, we let $a\in F\cA$ be the fiber of $Fb\to b'$. By assumption, there is an essentially unique $\widetilde{a}\to b$, with $\widetilde{a}\in \cA$, lifting the morphism $a\to Fb$, and the cofiber of $\widetilde{a}\to b$ is the desired lift. In the second situation, we instead let $a\in F\cA$ be the cofiber of $b'\to Fb$. Again we have an essentially unique $b\to \widetilde{a}$ lifting the morphism $Fb\to a$, and its fiber is the desired lift.
\end{proof}

\begin{theorem}
\label{thm:pullback}
Let $\cF$ be a family of orbits of $G$ and $G/K\not\in \cF$ with $G/K'\in \cF$ for all $K'\subsetneq \cF$. Then there is a pullback diagram of symmetric-monoidal stable $\infty$-categories
\[
\begin{tikzcd}
\Sp^G_\omega/\cF \rar\dar{\Phi^K} & \Sp^G_\omega/\cF\cup\{G/K\}\dar{\overline{\Phi}^K}\\
\Fun(BW_G(K), \Sp_\omega) \rar & \Fun(BW_G(K),\Sp_\omega)/\langle\bS[W_G(K)]\rangle
\end{tikzcd}
\]
\end{theorem}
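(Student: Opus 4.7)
The plan is to reduce the theorem to the abstract Lemma \ref{lem:pullbackquotient} applied to $\cB = \Sp^G_\omega/\cF$, the thick subcategory $\cA = \langle\Sigma^\infty_+ G/K\rangle\subseteq \cB$, the target $\cB' = \Fun(BW_G(K),\Sp_\omega)$, and the refined geometric fixed-point functor $F = \Phi^K\colon\cB\to \cB'$ from Lemma \ref{lem:geomfixedpointsiso}. That $\Phi^K$ descends from $\Sp^G_\omega$ to $\Sp^G_\omega/\cF$ in the first place is automatic: the hypothesis on $K$ and $\cF$ forces $K$ not to be subconjugate to any $H$ with $G/H\in\cF$ (otherwise $G/K$ itself would lie in $\cF$), so $\Phi^K\Sigma^\infty_+ G/H \simeq \Sigma^\infty_+ (G/H)^K \simeq 0$.

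Next I would identify the two corners on the right of the desired square. The top-right is $\cB/\cA$ by iterated Verdier quotients, which by inspection of generators agrees with $\Sp^G_\omega/\cF\cup\{G/K\}$. For the bottom-right, I need $F(\cA) = \langle \bS[W_G(K)]\rangle$: one inclusion is clear since $\Phi^K\Sigma^\infty_+ G/K \simeq \Sigma^\infty_+(G/K)^K \simeq \bS[W_G(K)]$. For the reverse, Lemma \ref{lem:geomfixedpointsiso} implies $F|_\cA$ is fully faithful, and because $\cA$ is idempotent complete (being a thick subcategory), any idempotent on $F(A)$ for $A\in\cA$ corresponds via $F$ to an idempotent on $A$ that splits in $\cA$. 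Hence the essential image $F(\cA)\subseteq \cB'$ is already closed under shifts, cofibers, fibers, and retracts, and therefore equals $\langle\bS[W_G(K)]\rangle$.

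With both corners identified, the hypotheses of Lemma \ref{lem:pullbackquotient} follow immediately: condition (1) is exactly Lemma \ref{lem:geomfixedpointsiso}, and condition (2) is covered by thickness of $\cA$ together with the retract-closure argument just given. Applying the lemma produces the claimed diagram as a pullback of stable $\infty$-categories. To upgrade to the symmetric-monoidal statement, I would note that both Verdier quotients of $\Sp^G_\omega$ are symmetric monoidal by Lemma \ref{lem:quotientmonoidal} (the ideal condition holds because of the double coset decomposition of $G/H\times G/K'$), and $\langle\bS[W_G(K)]\rangle\subseteq\Fun(BW_G(K),\Sp_\omega)$ is an ideal since $\bS[W_G(K)]\otimes X$ is induced from the trivial subgroup and thus lies in $\langle\bS[W_G(K)]\rangle$ whenever $X$ is compact. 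All four functors in the square are symmetric-monoidal by construction, and since the forgetful functor from symmetric-monoidal stable $\infty$-categories to stable $\infty$-categories preserves limits, the pullback automatically inherits the symmetric-monoidal enhancement.

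The main technical point I expect to require care is identifying $F(\cA)$ with $\langle \bS[W_G(K)]\rangle$ on the nose, rather than only up to idempotent completion; this is the only place where one genuinely has to combine fully faithfulness from Lemma \ref{lem:geomfixedpointsiso} with idempotent completeness of $\cA$. The remainder is bookkeeping around Lemma \ref{lem:pullbackquotient} and the standard fact that compatible symmetric-monoidal structures on the vertices and arrows of a pullback square lift the pullback to the symmetric-monoidal setting.
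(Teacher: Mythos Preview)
Your overall strategy matches the paper's: reduce to Lemma \ref{lem:pullbackquotient} with $\cB=\Sp^G_\omega/\cF$, $\cA=\langle\Sigma^\infty_+ G/K\rangle$, $\cB'=\Fun(BW_G(K),\Sp_\omega)$, and $F=\Phi^K$, then identify $\cB/\cA\simeq\Sp^G_\omega/\cF\cup\{G/K\}$ and $\cB'/F(\cA)\simeq\Fun(BW_G(K),\Sp_\omega)/\langle\bS[W_G(K)]\rangle$. Condition (1) of the lemma is indeed Lemma \ref{lem:geomfixedpointsiso}, as you say.

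There is, however, a genuine gap in your verification of condition (2), namely that $F(\cA)=\langle\bS[W_G(K)]\rangle$. You argue that $\cA$ is ``idempotent complete (being a thick subcategory)'', but these are different notions: a thick subcategory is closed under retracts \emph{that already exist in the ambient category}, whereas idempotent completeness asks that \emph{every} idempotent splits. The ambient category $\Sp^G_\omega/\cF$ is a Verdier quotient and is not in general idempotent complete, so $\cA$ need not be either. Concretely, an idempotent on $F(A)$ in the idempotent-complete $\cB'$ pulls back to an idempotent on $A$ in $\cB$, but nothing guarantees that this idempotent splits in $\cB$, so you cannot conclude that the retract of $F(A)$ lies in $F(\cA)$. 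The paper explicitly flags this point.

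The paper's fix is different and does not rely on idempotent completeness: it constructs a section. The functor $\Fun(BW_G(K),\Sp)\to\Sp^G$ sending $X\mapsto X\otimes_{W_G(K)}\Sigma^\infty_+ G/K$ takes $\langle\bS[W_G(K)]\rangle$ into $\langle\Sigma^\infty_+ G/K\rangle$, and after passing to $\Sp^G_\omega/\cF$ it splits $\Phi^K$ on these subcategories. This directly shows $\Phi^K\colon\langle\Sigma^\infty_+ G/K\rangle\to\langle\bS[W_G(K)]\rangle$ is essentially surjective, hence an equivalence, which is exactly what is needed for condition (2).
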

\begin{proof}
We apply Lemma \ref{lem:pullbackquotient}. By Lemma \ref{lem:geomfixedpointsiso}, the first condition is satisfied. The second condition requires some justification, as $\langle \Sigma^\infty_+ G/K\rangle$ upstairs and $\langle \bS[W_G(K)]\rangle$ downstairs are both by definition closed under retracts in the respective ambient category, but it is not a priori clear that the essential image of $\Phi^K|_{\langle \Sigma^\infty_+ G/K\rangle}$ agrees with $\langle \bS[W_G(K)]\rangle$ (this is closely related to the fact that $\Sp^G_\omega/\cF$ is not necessarily idempotent complete, so $\langle \Sigma^\infty_+ G/K\rangle$ is not necessarily idempotent complete).

However, there is also a functor $\Fun(BW_G(K), \Sp) \to \Sp^G$, given by sending a spectrum $X$ with $W_G(K)$-action to $X\otimes_{W_G(K)} \Sigma^\infty_+ G/K$, and it takes $\langle \bS[W_G(K)]\rangle$ to the thick subcategory $\langle\Sigma^\infty_+ G/K\rangle$ in $\Sp^G$. After a further composite to $\Sp^G_\omega/\cF$, this splits the functor $\Phi^K: \langle \Sigma^\infty_+ G/K\rangle\to \langle \bS[W_G(K)]\rangle$ above and shows that it is essentially surjective, which proves the theorem.
\end{proof}

We can also give a version with coefficients. To state this, let $X\in \Sp$ be an ordinary spectrum. We obtain a genuine spectrum $X_G$ in genuine $G$-spectra by restricting $X\in \Sp=\Sp^e$ along the map $G\to e$ (this is commonly called inflation rather than restriction). This construction $\Sp\to \Sp^G$ commutes with colimits and sends $\bS$ to the genuine sphere $\bS_G$. In particular, $X_G$ also admits a description as $X\otimes \bS_G$, using that $\Sp^G$ is tensored over spectra, since it is stable and admits colimits. This also shows that $X\mapsto X_G$ is symmetric-monoidal, so if $R$ is a commutative ring spectrum, $R_G$ is a commutative algebra in $\Sp^G$. Finally, we also see $\Phi^K X_G \simeq X$.

\begin{theorem}
\label{thm:pullbackcoeff}
Let $R$ be an ordinary connective commutative ring spectrum, and let $\cF$ and $K$ be as in Theorem \ref{thm:pullback}. Then there is a pullback diagram of symmetric-monoidal stable $\infty$-categories
\[
\begin{tikzcd}
\Mod_\omega(R_G)/\cF \rar\dar{\Phi^K} & \Mod_\omega(R_G)/\cF\cup\{G/K\}\dar{\overline{\Phi}^K}\\
\Fun(BW_G(K), \Mod_\omega(R)) \rar & \Fun(BW_G(K),\Mod_\omega(R))/\langle R[W_G(K)]\rangle,
\end{tikzcd}
\]
where $\Mod_\omega(R_G)/\cF$ denotes the Verdier quotient by $\langle R_G\otimes \Sigma^\infty_+ G/H\rangle_{G/H \in \cF}$.
\end{theorem}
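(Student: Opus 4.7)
The plan is to mimic the proof of Theorem \ref{thm:pullback} essentially verbatim, applying Lemma \ref{lem:pullbackquotient} with $\cB = \Mod_\omega(R_G)/\cF$, stable full subcategory $\cA = \langle R_G \otimes \Sigma^\infty_+ G/K\rangle$, target $\cB' = \Fun(BW_G(K),\Mod_\omega(R))$, and functor $F = \Phi^K$. The substitutions $\bS_G \rightsquigarrow R_G$ and $\bS[W_G(K)] \rightsquigarrow R[W_G(K)]$ are harmless because the ingredients driving the original proof — that $R_G$ is inflated, that $\Phi^K$ is symmetric monoidal, that $\Phi^K R_G \simeq R$, and that $R_G \otimes \Sigma^\infty_+ G/H$ form compact generators — all survive.

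First I would prove the $R$-linear version of Lemma \ref{lem:geometricfixedpointscorepresented}: for $\cF$ and $K$ as in the statement and $X \in \Mod_\omega(R_G)$,
\[
\map_{\Mod_\omega(R_G)/\cF}(R_G \otimes \Sigma^\infty_+ G/K, X) \simeq \Phi^K X.
\]
By Lemma \ref{lem:mapsinquotient} the left side is a filtered colimit indexed by mod-$\cF$ equivalences $X \to X'$; compactness of $R_G \otimes \Sigma^\infty_+ G/K$ in the presentable category $\Mod(R_G)$ pulls the colimit inside, and the same cone-killing argument kills $H$-fixed points of the resulting colimit for $H \in \cF$, so the surviving $K$-fixed points compute $\Phi^K X$. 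Next I would establish the $R$-linear analog of Lemma \ref{lem:geomfixedpointsiso}: the $W_G(K)$-refinement of $R_G \otimes \Sigma^\infty_+ G/K$ yields a refinement of $\Phi^K$ to $\Fun(BW_G(K),\Mod_\omega(R))$, and the base case $X = R_G \otimes \Sigma^\infty_+ G/K$ uses $\Phi^K(R_G \otimes \Sigma^\infty_+ G/K) \simeq R[W_G(K)]$ (a free $R$-module spectrum with $W_G(K)$-action) together with the preceding step to identify the comparison map with the equivalence just computed. The thick-subcategory argument in $X$ and the Spanier–Whitehead duality swap in $Y$ go through unchanged, noting that compact $R_G$-modules remain dualizable since all compact $G$-spectra are and tensoring with $R$ preserves this.

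With the two lemmas in place, the two hypotheses of Lemma \ref{lem:pullbackquotient} are verified as in the proof of Theorem \ref{thm:pullback}: condition (1) is precisely the $R$-linear Lemma \ref{lem:geomfixedpointsiso}, and for condition (2) the analogue of the splitting functor from Theorem \ref{thm:pullback}, namely
\[
\Fun(BW_G(K),\Mod_\omega(R)) \longrightarrow \Mod_\omega(R_G), \qquad M \longmapsto M \otimes_{R[W_G(K)]} (R_G \otimes \Sigma^\infty_+ G/K),
\]
lands in $\langle R_G \otimes \Sigma^\infty_+ G/K\rangle$ and composes with $\Phi^K$ to the identity on $\langle R[W_G(K)]\rangle$; this identifies the essential image of $F|_{\cA}$ with the retract-closed subcategory $\langle R[W_G(K)]\rangle$, as required.

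The main obstacle I anticipate is bookkeeping rather than content: making precise that $\Mod(R_G)$ inherits from $\Sp^G$ all the structural features used above — compact generation by $R_G \otimes \Sigma^\infty_+ G/H$, the $R_G$-linear refinement of $\Phi^K$ coming from its symmetric monoidality, and dualizability of all compact $R_G$-modules — so that each step of the proof of Theorem \ref{thm:pullback} transcribes unambiguously. No new computation is required beyond the identification $\Phi^K(R_G \otimes \Sigma^\infty_+ G/K) \simeq R[W_G(K)]$ as $R$-modules with $W_G(K)$-action, which is immediate from the symmetric monoidality of $\Phi^K$ and $\Phi^K R_G \simeq R$.
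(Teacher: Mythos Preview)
Your proposal is correct and follows essentially the same route as the paper: verify the $R$-linear analogues of Lemmas \ref{lem:geometricfixedpointscorepresented} and \ref{lem:geomfixedpointsiso}, then feed these into Lemma \ref{lem:pullbackquotient}, and handle condition (2) via the same splitting functor. The paper's proof is terser (it invokes an adjunction rather than compactness directly for the corepresentability step, and phrases the section as the spectral functor $X\mapsto X\otimes_{W_G(K)}\Sigma^\infty_+ G/K$ observed to preserve $R$-module structure), but the content is identical; one small imprecision in your write-up is that the splitting functor only lands in $\langle R_G\otimes\Sigma^\infty_+ G/K\rangle$ after restriction to $\langle R[W_G(K)]\rangle$, not on all of $\Fun(BW_G(K),\Mod_\omega(R))$.
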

\begin{proof}
The conclusion of Lemma \ref{lem:geomfixedpointsiso} still holds for the functor $\Mod_\omega(R)/\cF\to \Fun(BW_G(K), \Mod_\omega(\Phi^K R))$. For maps out of $R\otimes \Sigma^\infty_+ G/K$, this reduces by an adjunction to the claim that $\colim_{X\xto{\sim} X'} X'$ has trivial geometric $H$-fixed points for all $H\in \cF$, which is done by a similar argument as there. As objects $\Mod(R)_\omega$ are still dualizable, the rest of the argument goes through unchanged.

To see that $\Phi^K$ is essentially surjective on kernels, we can similarly construct a section: The functor $\Fun(BW_G(K),\Sp)\to \Sp^G$, $X\mapsto X\otimes_{W_G(K)} \Sigma^\infty_+ (G/K)$ sends $R$-modules to $R_G$-modules, giving a functor $\Fun(BW_G(K),\Mod(R)) \to \Mod(R_G)$ which sends $R[W_G(K)]\mapsto R_G \otimes \Sigma^\infty_+ (G/K)$.
After restricting to $\Fun(BW_G(K),\Mod_\omega(R))$, this produces a section to $\Phi^K: \Mod(R_G)_\omega/\cF \to \Fun(BW_G(K), \Mod_\omega(R))$, which sends $\langle R[W_G(K)]\rangle$ to $\langle \Sigma^\infty_+ G/K\rangle$. This shows that the restriction of $\Phi^K$ to the kernels is essentially surjective.
\end{proof}

In the next section, we will interpret the lower right corner as a variant of the stable module category of $G$.

\section{Stable module categories with general coefficients}

We present here a version of the stable module category of a group $G$ with coefficients in an arbitrary commutative ring spectrum $R$. For $R$ the Eilenberg-MacLane spectrum associated to a field $k$ of characteristic $p$, this recovers the usual notion.

\begin{definition}
\label{def:stmod}
For a group $G$ and commutative ring spectrum $R$, we define the \emph{small stable module category} $\StMod^{\fin}(RG)$ as the Verdier quotient
\[
\Fun(BG, \Mod_\omega(R))/\langle R[G]\rangle.
\]
\end{definition}

For $R$ an ordinary ring (where we still use $\Mod(R)$ to refer to module spectra over $R$ considered as an Eilenberg-Mac Lane spectrum), $\Mod_\omega(R)$ agrees with the full subcategory of the derived ($\infty$-)category of $R$ on perfect complexes.

Thus, objects of $\StMod^{\fin}(RG)$ are represented by perfect $R$-complexes with a $G$-action, and morphisms can be described by Lemma \ref{lem:mapsinquotient}. We first make this description explicit:

\begin{lemma}
\label{lem:mappingtate}
For $R$ a commutative ring spectrum and $X,Y$ compact $R$-modules with a $G$-action, we have that
\[
\map_{\StMod^{\fin}(RG)}(X,Y) = \map_{\Mod(R)}(X,Y)^{tG}.
\]
\end{lemma}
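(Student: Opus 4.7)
The plan is to construct a natural comparison map $\Phi: \map_{\StMod^{\fin}(RG)}(X,Y) \to \map_R(X,Y)^{tG}$ via the universal property of the Verdier quotient, then verify it is an equivalence by reducing to the generating case $X = R$. For the construction, the key input is that $Y \mapsto \map_R(X, Y)^{tG}$ vanishes on $\langle R[G]\rangle$: for compact $X$, $\map_R(X, R[G]) \simeq X^\vee \otimes R[G]$ with diagonal $G$-action, and the standard untwisting $v \otimes g \mapsto g^{-1}v \otimes g$ identifies this with $X^\vee$ equipped with trivial action tensored with $R[G]$, an induced $G$-module on which Tate vanishes. So $\map_R(X, -)^{tG}$ factors through the Verdier quotient, and composing the natural $\map_R(X,-)^{hG} \to \map_R(X, -)^{tG}$ with the localisation map and applying Yoneda to $\mathrm{id}_X$ produces $\Phi$.

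For the case $X = R$, Lemma \ref{lem:mapsinquotient}(2) gives
\[
\map_{\StMod^{\fin}(RG)}(R, Y) \simeq \colim_{(F \to Y) \in \langle R[G]\rangle_{/Y}} (Y/F)^{hG},
\]
with $Y/F := \mathrm{cofib}(F \to Y)$. I would exhibit as a cofinal sequential subdiagram the maps $Y \to Y/F_n$, where $F_n := EG^{(n)}_+ \otimes Y$ is the $n$-th stage of the standard free cell filtration of $EG_+ \otimes Y$, so that $F_n \in \langle R[G]\rangle$. Cofinality reduces to the essentially unique lifting of any map $F \to Y$ with $F \in \langle R[G]\rangle$ through $EG_+ \otimes Y \to Y$, followed by factoring through some $F_n$ by compactness of $F$. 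The lift exists because $\map(R[G], \widetilde{EG} \otimes Y) \simeq (\widetilde{EG} \otimes Y)^{\mathrm{underlying}} = 0$, as $\widetilde{EG}$ has trivial underlying spectrum, and this extends to $\langle R[G]\rangle$ by thick closure. On the cofinal sequence, the norm equivalence $(F_n)_{hG} \simeq F_n^{hG}$ (valid since $F_n \in \langle R[G]\rangle$) combined with naturality of the norm yields $(Y/F_n)^{hG} \simeq \mathrm{cofib}((F_n)_{hG} \to Y_{hG} \xto{\mathrm{Nm}} Y^{hG})$. Passing to the filtered colimit, and using that $(-)_{hG}$ commutes with colimits, the source becomes $(EG_+ \otimes Y)_{hG} \simeq Y_{hG}$ mapping by the norm, so $\colim_n (Y/F_n)^{hG} \simeq \mathrm{cofib}(Y_{hG} \xto{\mathrm{Nm}} Y^{hG}) = Y^{tG}$.

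Finally, for general compact $X$, both $X \mapsto \map_{\StMod^{\fin}(RG)}(X, Y)$ and $X \mapsto \map_R(X, Y)^{tG}$ are exact contravariant functors vanishing on $\langle R[G]\rangle$, hence factor through $\StMod^{\fin}(RG)^{\mathrm{op}}$. Since $R[G]$ becomes zero in the quotient and $\{R, R[G]\}$ thickly generates $\Fun(BG, \Mod_\omega(R))$, the image of $R$ thickly generates $\StMod^{\fin}(RG)$; a natural transformation of exact functors that is an equivalence at $R$ is an equivalence everywhere. The main obstacle is verifying cofinality of the sequential subdiagram inside the full $\infty$-category of mod $\langle R[G]\rangle$-equivalences --- showing that the space of lifts $F \to F_n$ over $Y$ assembles into a contractible directed system as $n$ varies --- but once the vanishing $\map(F, \widetilde{EG} \otimes Y) = 0$ for $F \in \langle R[G]\rangle$ is in place, this is standard, and the remainder of the argument is a formal manipulation of norm cofiber sequences.
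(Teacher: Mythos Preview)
Your reduction to $X=R$ via thick generation contains a genuine gap: the claim that $\{R, R[G]\}$ thickly generates $\Fun(BG,\Mod_\omega(R))$ is false in general. Take $R=\F_2$ and $G=C_6$. The block decomposition $\F_2[C_6]\cong \F_2[C_2]\times\F_4[C_2]$ splits $\Fun(BC_6,\Mod_\omega(\F_2))$ as a product; the trivial module $\F_2$ lives entirely in the principal block, while the non-principal component of $\langle \F_2[C_6]\rangle$ is $\Perf(\F_4[C_2])$. The simple module $\F_4$ (with $C_3$ acting by a primitive cube root of unity and $C_2$ trivially) has infinite projective dimension over $\F_4[C_2]$ and hence lies outside $\langle \F_2,\F_2[C_6]\rangle$. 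Equivalently, the image of $R$ does not thickly generate $\StMod^{\fin}(RG)$ here, so a natural transformation that is an equivalence at $R$ need not be an equivalence everywhere.

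The fix is immediate and uses an ingredient you already invoked elsewhere: dualizability. Since every object of $\Fun(BG,\Mod_\omega(R))$ is dualizable and the quotient functor is symmetric monoidal, one has
\[
\map_{\StMod^{\fin}(RG)}(X,Y)\simeq\map_{\StMod^{\fin}(RG)}(R,\,DX\otimes_R Y),\qquad \map_R(X,Y)^{tG}\simeq(DX\otimes_R Y)^{tG},
\]
so your $X=R$ computation applied to $DX\otimes_R Y$ gives the general case directly. The paper's proof uses exactly this dualizability, but at a different point: it keeps general $X$ throughout, writes the mapping spectrum as the cofiber of
\[
\colim_{A\in\langle R[G]\rangle_{/Y}}\map_R(X,A)^{hG}\longrightarrow\map_R(X,Y)^{hG},
\]
observes that $\map_R(X,A)\simeq DX\otimes_R A$ is Tate-acyclic so the norm identifies the source with $\colim_A \map_R(X,A)_{hG}\simeq \map_R(X,Y)_{hG}$ (using $\colim_A A\simeq Y$), and recognises the resulting cofiber as $\map_R(X,Y)^{tG}$. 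Your $EG^{(n)}$-filtration is essentially an explicit cofinal witness for that colimit identification; once the thick-generation step is replaced by duality, the two arguments are close variants of one another.
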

\begin{proof}
By Lemma \ref{lem:mapsinquotient}, we can describe $\map_{\StMod^{\fin}(RG)}(X,Y)$ as
\[
\colim_{Y\xto{\sim} Y'} \map_{\Fun(BG, \Mod(R)_\omega)}(X,Y') = \colim_{Y\xto{\sim} Y'} \map(X,Y')^{hG},
\]
which, by writing the maps $Y\to Y'$ as cofibers of their fibers $A\to Y\in \langle R[G]\rangle_{/Y}$, we can exhibit as the cofiber of
\begin{equation}
\label{eq:cofibermap}
\colim_{A \in \langle R[G]\rangle_{/Y}} \map_{\Mod(R)}(X,A)^{hG}\to \map_{\Mod(R)}(X,Y)^{hG}
\end{equation}
If $A$ is in $\langle R[G]\rangle$ and $X$ is dualizable, $\map_{\Mod(R)}(X,A) = DX\otimes_R A$ is annihilated by the Tate construction, so in the diagram
\[
\begin{tikzcd}
\colim_{A \in \langle R[G]\rangle_{/Y}} \map_{\Mod(R)}(X,A)_{hG}\dar{N}\rar& \map_{\Mod(R)}(X,Y)_{hG}\dar{N}\\
\colim_{A \in \langle R[G]\rangle_{/Y}} \map_{\Mod(R)}(X,A)^{hG}\rar& \map_{\Mod(R)}(X,Y)^{hG}\\
\end{tikzcd}
\]
the left vertical map is an equivalence. The upper horizontal map is also an equivalence: Because homotopy orbits commute with colimits, and $X$ is compact, it suffices to check that
\[
\colim_{A\in \langle R[G]\rangle_{/Y}} A \to Y
\]
is an equivalence. Equivalently, it suffices to check that the cofiber $\colim_{Y\xto{\sim} Y'} Y'$ is zero. We can check this on homotopy groups, where it follows from the fact that the cofiber of a map $\Sigma^n R[G]\to Y'$ is still an object in the colimit diagram.

This identifies the map in \eqref{eq:cofibermap} with the norm $N: \map_{\Mod(R)}(X,Y)_{hG} \to \map_{\Mod(R)}(X,Y)^{hG}$, and thus its cofiber with $\map_{\Mod(R)}(X,Y)^{tG}$, as desired.
\end{proof}

\begin{remark}
Some authors, e.g. \cite{mathew}, refer by stable module category instead to the large, presentable $\infty$-category $\Ind(\StMod^{\fin}(kG))$. The compact objects in there agree with the idempotent completion of $\StMod^{\fin}(kG)$. By a similar argument as in Proposition \ref{prop:idempotentcompletion}, one sees that this agrees with $\StMod^\fin(kG)$ in the case of a finite field.
In general, there are idempotents in $\StMod^{\fin}(RG)$ that do not split, this happens already for $R=\Z$, $G=C_6$.
\end{remark}

In the language of Definition \ref{def:stmod}, Theorem \ref{thm:pullback} thus exhibits $\Sp^G_\omega$ as an iterated pullback along functors $\Fun(BW_G(K), \Sp_\omega) \to \StMod^{\fin}(\bS G)$. In particular, we get a corresponding pullback for Picard groupoids. This is not yet very useful: Even $\Pic(\Fun(BW_G(K), \Sp_\omega))$ is complicated, consisting of all $W_G(K)$-actions on shifts of $\bS$. $\StMod^{\fin}(\bS G)$ is even more difficult to understand, as the Tate construction in spectra behaves very surprisingly, due to the Segal conjecture (proven by Carlsson in \cite{carlsson}).

While both categories are individually complicated, the problem becomes much simpler in a relative sense. More precisely, we will be able to control the above functor by the corresponding $\Z$-linear version $\Fun(BW_G(K), \Mod_\omega(\Z)) \to \StMod^{\fin}(\Z G)$. This will allow us to control the Picard group of $\Sp^G$ through the Picard group of $\StMod^{\fin}(\Z G)$, which is a purely algebraic object.

\begin{definition}
Let $R$ be a connective commutative ring spectrum. We say an object $X\in \Mod_\omega(R)$ is \emph{concentrated in $[a,b]$} for integers $a,b$ if $X$ is $a$-connective and the $R$-linear dual $DX$ is $(-b)$-connective.
\end{definition}

\begin{remark}
One can check that the notion of being concentrated in $[a,b]$ agrees with the notion of having Tor-amplitude in $[a,b]$, which is more generally defined for non-dualizable modules. (An object $X\in \Mod(R)$ has Tor-amplitude in $[a,b]$ if every $X\otimes_R M$ has homotopy groups concentrated in $[a,b]$ for every $R$-module $M$ which has homotopy groups concentrated in degree $0$.) Note that being concentrated in $[a,b]$ is not the same as being $a$-connective and $b$-coconnective: For example, the free module $R$ is concentrated in $[0,0]$ even if $R$ has positive homotopy groups. Even for $R$ an ordinary ring, the two notions differ, for example the $\Z$-module $\Z/2$ is not concentrated in $[0,0]$, as $D\Z/2 = \Z/2[-1]$.
\end{remark}

\begin{lemma}
\leavevmode
\label{lem:concentratedbasics}
\begin{enumerate}
\item If $X$ is concentrated in $[a,b]$ with $a>b$, $X\simeq 0$.
\item If $X$ is concentrated in $[a,a]$, it is a retract of a finite sum of copies of $\Sigma^a R$ (``finitely generated projective concentrated in degree $a$'').
\end{enumerate}
\end{lemma}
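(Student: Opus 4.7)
The plan is to reduce both statements to the case where $R$ is an ordinary ring, via base change along $R \to \pi_0 R$. Assume $a = 0$ by shifting. Base change preserves concentration: $X \otimes_R \pi_0 R$ is still $0$-connective as a $\pi_0 R$-module, and since base change commutes with duality for dualizable objects, $D_{\pi_0 R}(X \otimes_R \pi_0 R) \simeq DX \otimes_R \pi_0 R$ is still $(-b)$-connective. Moreover, the Tor spectral sequence computing $\pi_*(Y \otimes_R \pi_0 R)$ for connective $Y$ collapses on the edge to give $\pi_n Y \cong \pi_n(Y \otimes_R \pi_0 R)$ whenever $Y$ is $n$-connective; iterating this gives a Nakayama statement, reducing $X \simeq 0$ to $X \otimes_R \pi_0 R \simeq 0$.

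Over an ordinary ring $S$, any perfect $0$-connective $S$-complex is quasi-isomorphic to a chain complex $(P_i)_{i\geq 0}$ of finitely generated projectives (one truncates negative-degree terms using that surjections onto projectives split). Its $S$-linear dual is then represented by the dual complex sitting in cohomological degrees $\leq 0$, and hence has homology only in non-positive degrees. For part (1), combining with $(-b)$-connectivity of $D_S X$ (i.e., homology in degrees $\geq -b > 0$) forces the homology into an empty range, so $D_S X \simeq 0$ and thus $X \simeq 0$. For part (2) with $b = 0$, combining both conditions instead shows $D_S X$ is concentrated in degree $0$; dualizing back and using that $X$ is perfect $0$-connective (so $\mathrm{Ext}^{>0}(H_0 D_S X, S) = 0$) forces $H_0 D_S X$ to be finitely generated projective, whence $X \otimes_R \pi_0 R \simeq P$ for some finitely generated projective $\pi_0 R$-module $P$ concentrated in degree $0$.

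To construct the retract for part (2), I would identify the mapping groups over $R$ with their $\pi_0 R$-analogues: using $\Hom_R(X, R^n) \simeq DX \otimes_R R^n$ and $\Hom_R(X, X) \simeq DX \otimes_R X$ from dualizability, together with Tor spectral sequence collapses afforded by $0$-connectivity of $X$ and $DX$, one obtains $\pi_0 X = P$, $\pi_0 DX = P^\vee$, $[R^n, X]_R \cong P^n$, $[X, R^n]_R \cong (P^\vee)^n$, and $[X,X]_R \cong \End_{\pi_0 R}(P)$ as rings. Choosing $n$ such that $P$ is a summand of $(\pi_0 R)^n$ with inclusion $\iota$ and retraction $\pi$ satisfying $\pi \iota = \id_P$, I would lift these across the above isomorphisms to maps $\sigma\colon X \to R^n$ and $\phi\colon R^n \to X$; under the identification of endomorphism rings, $\phi \sigma$ corresponds to $\pi \iota = \id_P$, hence $\phi \sigma = \id_X$, exhibiting $X$ as a retract of $R^n$. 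The main obstacle is verifying the various Tor spectral sequence collapses coherently; they all rely crucially on the two-sided connectivity provided by the concentration assumption.
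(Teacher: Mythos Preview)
Your approach is essentially correct, but it is considerably more roundabout than the paper's, and it carries two small imprecisions worth flagging.

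The paper never base-changes to $\pi_0(R)$. For (1) it simply notes that over a connective base connectivities add under $\otimes_R$, so $X\otimes_R DX$ is $(a-b)$-connective; when $a>b$ this makes the coevaluation $R\to X\otimes_R DX$ null (it is detected on $\pi_0$), and the triangle identity forces $X\simeq 0$. For (2) it stays over $R$ as well: from $\pi_0(X\otimes_R DX)\cong \pi_a(X)\otimes_{\pi_0 R}\pi_{-a}(DX)$ the duality data exhibits $\pi_a(X)$ and $\pi_{-a}(DX)$ as mutually dual finitely generated projective $\pi_0(R)$-modules; one then picks a retract $P$ of a finite free module in degree $a$ with a map $P\to X$ that is an isomorphism on $\pi_a$, checks via the dual cofiber sequence that the cofiber is concentrated in $[a+1,a]$, and invokes (1). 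This avoids chain-complex models and all the Tor spectral sequence bookkeeping you anticipate needing.

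On your version: first, the Nakayama step only shows $X$ is $\infty$-connective; to get $X\simeq 0$ you still have to use dualizability and argue that the coevaluation $R\to X\otimes_R DX$ vanishes---which is exactly the paper's direct argument, so the detour through $\pi_0(R)$ buys nothing for (1). Second, the implication ``$\mathrm{Ext}^{>0}(M,S)=0$ forces $M$ finitely generated projective'' is false for a general finitely presented $M$; what makes it work here is that $M=H_0(D_S X_S)$ is itself a perfect complex, so one can run a splitting argument on a finite projective resolution (the top differential of the dual complex is a surjection onto a projective, hence splits, and one truncates inductively). You have this extra input available, but it, rather than the Ext vanishing alone, is what carries the weight.
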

\begin{proof}
If $X$ is concentrated in $[a,b]$ with $a>b$, the connectivity of $X\otimes_R DX$ is $a-b>0$. So the unit $R\to X\otimes_R DX$ of the duality is zero for degree reasons, and thus $X\simeq 0$.

If $X$ is concentrated in $[a,a]$, $X\otimes DX$ is $0$-connective, and $\pi_0(X\otimes_R DX) \cong \pi_a(X)\otimes_{\pi_0(R)} \pi_{-a}(DX)$. So the duality between $X$ and $DX$ yields a duality between $\pi_a(X)$ and $\pi_{-a}(DX)$ in the abelian category of $\pi_0(R)$-modules. In particular, those are finitely generated projective. We now choose a retract $P$ of a finite sum of copies of $\Sigma^a R$ and a map $P\to X$ inducing an isomorphism on $\pi_a$. By duality, the map $\pi_{-a}(DX) \to \pi_{-a}(DP)$ is an isomorphism as well, and so the cofiber sequences
\begin{gather*}
P\to X \to C\\
DC \to DX \to DP
\end{gather*}
show that $C$ is concentrated in $[a+1,a]$, therefore vanishes by the first statement. So $P\simeq X$, which finishes the proof. 
\end{proof}

\begin{lemma}
\label{lem:objectsrepresentative}
Let $R$ be a connective ring spectrum. Every object of $\StMod^{\fin}(RG)$ admits a representative in $\Fun(BG, \Mod_\omega(R))$ whose underlying $R$-module is concentrated in $[0,0]$, i.e. a retract of a finite sum of copies of $R$ (``finitely generated projective and concentrated in degree $0$'').

An object $X\in \Fun(BG,\Mod_\omega(R))$ whose underlying $R$-module is concentrated in $[0,0]$ is zero in $\StMod^{\fin}(R(G))$ if and only if $X$ is a retract of a finite sum of copies of $R[G]$.
\end{lemma}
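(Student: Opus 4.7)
\emph{For part (1),} my plan is a cell-attachment induction. Since $X\in \Fun(BG,\Mod_\omega(R))$ has compact underlying $R$-module, it is concentrated in some finite range $[a,b]$, and I will induct on a measure of the distance from $[0,0]$ such as $|a|+|b|+(b-a)$. When $a<0$, finite generation of $\pi_a X$ over $\pi_0(R)[G]$ (inherited from its finite generation over $\pi_0 R$) yields a surjection $(\pi_0 R)[G]^n\twoheadrightarrow \pi_a X$, which lifts to an $R[G]$-linear map $\Sigma^a R[G]^n\to X$. Its cofiber $X'$ still lies in $\Fun(BG,\Mod_\omega(R))$, and the fibre $\Sigma^a R[G]^n$ of $X\to X'$ lies in $\langle R[G]\rangle$, so $X'\simeq X$ in $\StMod^{\fin}(RG)$. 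By construction $X'$ is $(a+1)$-connective; dualising the cofiber sequence and using $D_R R[G]\simeq R[G]$ (Frobenius self-duality, valid for finite $G$) shows $DX'$ is $\min(-b,-a-1)$-connective, which strictly improves the inductive quantity. The dual construction handles $b>0$, and iteration yields a representative concentrated in $[0,0]$.

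\emph{For part (2),} direction $(\Leftarrow)$ is immediate, since $\langle R[G]\rangle$ is thick and hence closed under retracts. For direction $(\Rightarrow)$, the strategy is to split the counit $\epsilon:\Ind_e^G(X_{|e})\to X$ of the induction--forgetful adjunction, where $X_{|e}$ denotes the underlying $R$-module of $X$: since $X_{|e}$ is a retract of $R^n$ by Lemma \ref{lem:concentratedbasics}(2), $\Ind_e^G(X_{|e})=R[G]\otimes_R X_{|e}$ is a retract of $R[G]^n$, so any section of $\epsilon$ realises $X$ as a retract of $R[G]^n$. To build the section: $X\simeq 0$ in $\StMod^{\fin}(RG)$ gives $\End_R(X)^{tG}\simeq 0$ by Lemma \ref{lem:mappingtate}, so the norm $N:\End_R(X)_{hG}\to\End_R(X)^{hG}=\End_{R[G]}(X)$ is an equivalence. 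The crucial observation is that concentration of $X$ in $[0,0]$ forces $\End_R(X)=D_R X\otimes_R X$ itself to be concentrated in $[0,0]$, in particular connective; the homotopy-orbits spectral sequence then collapses to give $\pi_0\End_R(X)_{hG}=(\pi_0\End_R(X))_G$, so the canonical map $\pi_0\End_R(X)\twoheadrightarrow\pi_0\End_R(X)_{hG}$ is surjective. Composing with $\pi_0 N$ produces a surjection $\pi_0\End_R(X_{|e})\twoheadrightarrow\pi_0\End_{R[G]}(X)$, so $\id_X$ is hit by some class $\phi$. Translating $\phi$ through the adjunction (using $\Ind_e^G\simeq\mathrm{Coind}_e^G$ for finite $G$) delivers the required section of $\epsilon$.

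The most delicate step will be identifying the composite $\pi_0\End_R(X_{|e})\to\pi_0\End_{R[G]}(X)$ with the conjugation-averaging norm $\phi\mapsto \sum_g g\phi g^{-1}$; this relies on the Frobenius self-duality of $R[G]$ in the $\infty$-categorical setting. Connectivity of $\End_R(X)$ coming from the concentration hypothesis is equally essential: without it, $\id_X\in\pi_0\End_{R[G]}(X)$ would be accessible only up to higher-filtration corrections in the homotopy-orbits spectral sequence, and no lift to $\pi_0\End_R(X_{|e})$ would be guaranteed.
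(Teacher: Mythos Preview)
Your argument for the first part is essentially the paper's: attach $\Sigma^a R[G]$-cells to kill the bottom homotopy, use the dual cofiber sequence to bound the dual's connectivity, and iterate (the paper first pushes $a$ to $0$ and then dualizes, rather than tracking a combined measure, but this is cosmetic).

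Your argument for the second part is correct but genuinely different from the paper's. The paper proceeds by an explicit filtration argument: since $X\in\langle R[G]\rangle$, the identity of $X$ factors through a finite cell complex $C$ built from shifted copies of $R[G]$; connectivity of $X$ and $DX$ then lets one truncate the factorisation $X\to C\to X$ first to $X\to C_0\to X$ and then to $X\to C_0/C_{-1}\to X$, exhibiting $X$ as a retract of the finite free module $C_0/C_{-1}$. Your route instead exploits Lemma~\ref{lem:mappingtate} directly: vanishing of $\End_R(X)^{tG}$ makes the norm an equivalence, connectivity of $\End_R(X)$ gives surjectivity of $\pi_0\End_R(X)\to\pi_0(\End_R(X)_{hG})$, and the resulting lift of $\id_X$ translates via $\Ind\simeq\mathrm{Coind}$ into a splitting of the counit $\epsilon$. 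This is more conceptual and shorter, leaning on machinery the paper has already developed; the paper's filtration argument is more elementary and self-contained, avoiding any appeal to the identification of the transfer $\phi\mapsto\epsilon\circ s_\phi$ with the composite through the norm (which, as you note, is the one point requiring care in your approach, though it is standard once the norm is set up via ambidexterity).
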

\begin{proof}
Assume we have $X\in \Fun(BG,\Mod_\omega(R))$ whose underlying $R$-module is concentrated in the interval $[a,b]$ with $a<0\leq b$. We find a map
\[
\bigoplus \Sigma^a R[G] \to X,
\]
from a finite free module on generators of degree $a$, which is surjective on $\pi_a$. Its cofiber $X'$ now is $a+1$-connective, and the dual fiber sequence
\[
DX' \to DX \to \Sigma^{-a} \bigoplus R[G]
\]
shows that, since $a<b$, $DX'$ is still $(-b)$-connective, so $X'$ is concentrated in $[a+1,b]$. Also, by construction, $X\to X'$ is an equivalence in $\StMod^{\fin}(RG)$.

Inductively, we can assume $X$ to be concentrated in $[0,b]$. So the dual is concentrated in $[-b,0]$, and we can perform the same construction as before, before dualizing back to obtain a representative of the same object of $\StMod^{\fin}(RG)$ concentrated in the interval $[0,0]$.

For the second part, assume $X\in \Fun(BG,\Mod_\omega(R))$ is zero in $\StMod^{\fin}(RG)$, i.e. $X\in \langle R[G]\rangle$.
Since $\map_{\Fun(BG,\Mod_{\omega}(R))}(R[G], R[G]) = R[G]$ is connective, a standard argument shows that every object of $\langle R[G]\rangle$ is a retract of a finite complex, i.e. there is a $C$ with a finite filtration by $C_n$ such that each cofiber $C_n/C_{n-1}$ is a finite sum of copies of $\Sigma^n R[G]$, and the identity of $X$ factors as $X\to C \to X$.

Now since $C/C_0$ is an extension of copies of $\Sigma^i R$ for $i>0$, and $DX$ is connective, so all maps $X\to \Sigma^i R$ for $i>0$ are nullhomotopic, the map $X\to C$ factors over $C_0$. We thus obtain a factorization of the identity on $X$ as $X\to C_0 \to X$. Since $C_{-1}$ is an extension of copies of $\Sigma^i R$ for $i<0$, we similarly have that the map $C_{-1}\to X$ is nullhomotopic, so the map $C_0\to X$ factors through $C_0/C_{-1}$, and we obtain a factorization of the identity on $X$ as $X\to C_0/C_{-1} \to X$. By assumption, $C_0/C_{-1}$ is a finite sum of copies of $R[G]$, as desired.
\end{proof}

\begin{remark}
Note that this shows in particular that every finite spectrum with $G$-action is equivalent in $\Fun(BG,\Sp_\omega)$ to some $G$-action on a finite wedge of spheres.
This highlights how far such actions are from being classified by their homology: For $G=C_p$, the Segal conjecture together with Lemma \ref{lem:objectsrepresentative} tells us that every finite $p$-complete spectrum arises as $tC_p$ applied to a suitable $C_p$-action on a wedge of $0$-spheres.
\end{remark}

In terms of representatives concentrated in $[0,0]$, we can also give a description of morphisms in $\StMod^{\fin}(RG)$:

\begin{lemma}
Let $X,Y\in \Fun(BG,\Mod_\omega(R))$ be two objects whose underlying $R$-module is concentrated in $[0,0]$. Then 
\[
[X,Y]_{\StMod^{\fin}(RG)} := \pi_0\map_{\StMod^{\fin}(RG)}(X,Y)
\]
is given by the group of maps $X\to Y$ in $\Fun(BG,\Mod_\omega(R))$ modulo those maps which factor through a finite sum of copies of $R[G]$.
\end{lemma}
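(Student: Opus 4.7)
The plan is to start from Lemma~\ref{lem:mappingtate}, which identifies $\map_{\StMod^{\fin}(RG)}(X,Y) \simeq \map_R(X,Y)^{tG}$, and read off $\pi_0$ from the norm cofiber sequence. Since $X$ and $Y$ are concentrated in $[0,0]$, Lemma~\ref{lem:concentratedbasics} shows both are retracts of finite sums of $R$, hence $\map_R(X,Y) \simeq DX \otimes_R Y$ is also concentrated in $[0,0]$ and in particular $0$-connective. Thus $\map_R(X,Y)_{hG}$ is $0$-connective as well, so $\pi_{-1}\map_R(X,Y)_{hG} = 0$, and the long exact sequence for the cofiber sequence
$$\map_R(X,Y)_{hG} \xto{N} \map_R(X,Y)^{hG} \to \map_R(X,Y)^{tG}$$
identifies $[X,Y]_{\StMod^{\fin}(RG)}$ with the cokernel of the norm $N \colon \pi_0\map_R(X,Y)_{hG} \to \pi_0\map_R(X,Y)^{hG} = [X,Y]_{\Fun(BG,\Mod_\omega(R))}$.

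It then suffices to show that the image of $N$ coincides with the subgroup $J$ of maps factoring through some $R[G]^n$. For the inclusion image$(N) \subseteq J$: by connectivity, $\pi_0\map_R(X,Y)_{hG} = (\pi_0\map_R(X,Y))_G$, so a class in the source is represented by some $m \colon X \to Y$ in $\Mod(R)$, and $N$ sends $[m]$ to the equivariant map $N(m)$ with underlying map $\sum_{g\in G} g_Y \circ m \circ g_X^{-1}$. This factors as
$$X \xto{x \mapsto \sum_g g\otimes m(g^{-1}x)} R[G]\otimes_R Y \xto{g\otimes y \mapsto gy} Y,$$
where the middle term is $\Ind\mathrm{Res}\, Y$ (i.e.\ $R[G]\otimes Y$ with $G$ acting only on the $R[G]$-factor); one directly checks both maps are equivariant for this action. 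As $\mathrm{Res}\, Y$ is a retract of $R^m$ in $\Mod(R)$, applying $\Ind$ exhibits $\Ind\mathrm{Res}\, Y$ as an equivariant retract of $R[G]^m$, and composing the above factorization through this retract produces the desired factorization $X \to R[G]^m \to Y$.

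For the reverse inclusion $J \subseteq$ image$(N)$: suppose $f = \beta \circ \alpha$ with $\alpha \colon X \to R[G]^n$ and $\beta \colon R[G]^n \to Y$ in $\Fun(BG, \Mod_\omega(R))$. Using simultaneously $\Ind \dashv \mathrm{Res}$ (so that $R[G]^n \simeq \Ind R^n$) and $\mathrm{Res} \dashv \mathrm{Coind}$ (so that $R[G]^n \simeq \mathrm{Coind}\, R^n$ via the canonical identification $\Ind \simeq \mathrm{Coind}$ for finite $G$), extract non-equivariant adjoints $\tilde\alpha \colon X \to R^n$ and $\tilde\beta \colon R^n \to Y$ in $\Mod(R)$. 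Writing elements of $R[G]^n$ as $(a_g)_{g\in G}$ with $G$-action $h\cdot(a_g) = (a_{h^{-1}g})$, the two adjunctions unravel to $\alpha(x) = (\tilde\alpha(g^{-1}x))_g$ and $\beta((b_g)_g) = \sum_g g\cdot \tilde\beta(b_g)$, giving
$$f(x) = \beta(\alpha(x)) = \sum_{g\in G} g \cdot \tilde\beta(\tilde\alpha(g^{-1}x)) = N(\tilde\beta \circ \tilde\alpha)(x),$$
so $f$ lies in the image of $N$.

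The principal technical awkwardness is this last computation: one has to carefully track the two distinct adjunctions between $\Mod(R)$ and $\Fun(BG,\Mod(R))$ and the canonical identification $\Ind \simeq \mathrm{Coind}$ arising from finiteness of $G$, and verify that the resulting explicit formulas on underlying maps simultaneously reproduce $\beta \circ \alpha$ and match the norm formula. Direction-1 is comparatively routine given the explicit formula for $N$ on $\pi_0$, and the opening reduction to a cokernel computation is essentially forced by Lemma~\ref{lem:mappingtate} together with the connectivity statement.
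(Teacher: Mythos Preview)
Your argument is correct and takes a genuinely different route from the paper's. The paper never invokes Lemma~\ref{lem:mappingtate} here; instead it works directly with the Verdier-quotient description of morphisms from Lemma~\ref{lem:mapsinquotient}. For the kernel, it shows that a map $X\to Y$ becomes zero in $\StMod^{\fin}(RG)$ iff it factors as $X\to A\to Y$ with $A\in\langle R[G]\rangle$, and then runs a Tor-amplitude reduction (attaching cells $\Sigma^a R[G]$ as in Lemma~\ref{lem:objectsrepresentative}, then dualising) to replace $A$ by one concentrated in $[0,0]$, which by Lemma~\ref{lem:objectsrepresentative} is a retract of a finite free module. Surjectivity of $[X,Y]_{\Fun(BG,\Mod_\omega(R))}\to [X,Y]_{\StMod^{\fin}(RG)}$ is handled separately by a zigzag-straightening argument, again via iterated connectivity reductions on the intermediate object $Y'$.

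Your approach is shorter: once Lemma~\ref{lem:mappingtate} is available, connectivity of $\map_R(X,Y)$ gives surjectivity for free from the long exact sequence, and the explicit $\epsilon_Y\circ\Ind(-)\circ\eta_X$ description of the norm on $\pi_0$ identifies the kernel directly with maps factoring through $\Ind\mathrm{Res}\,Y$, hence through a finite sum of $R[G]$, bypassing the need to improve an arbitrary $A\in\langle R[G]\rangle$. The paper's argument, by contrast, is more self-contained (it does not need the explicit norm formula or the $\Ind\simeq\mathrm{Coind}$ identification) and reuses exactly the same cell-attachment machinery that drives Lemmas~\ref{lem:objectsrepresentative} and~\ref{lem:representativesuniqueness}. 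One stylistic remark: your element-level formulas are a convenient shorthand, but the rigorous content of both directions is the single identity $\epsilon_Y\circ\Ind(\tilde\beta)\circ\Ind(\tilde\alpha)\circ\eta_X=\beta\circ\alpha$ together with the fact that $[m]\mapsto \epsilon_Y\circ\Ind(m)\circ\eta_X$ realises the $\pi_0$-norm; it would be slightly cleaner to state and use that identity directly rather than unpacking it elementwise.
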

\begin{proof}
We first check that a map $X\to Y$ in $\Fun(BG,\Mod_\omega(R))$ becomes zero in $\StMod^\fin(RG)$ precisely if it factors through a finite sum of copies of $R[G]$. By Lemma \ref{lem:mapsinquotient}, a map $X\to Y$ becomes zero in $\StMod^\fin(RG)$ precisely if we can precompose it with a mod $\langle R[G]\rangle$-equivalence $X'\to X$ such that the composite $X'\to X \to Y$ becomes zero. Equivalently, this means that a map is zero precisely if it factors as
\[
X \to A \to Y
\]
with $A\in \langle R[G]\rangle$.

Given such a diagram, assume $A$ is concentrated in $[a,b]$ with $a<0\leq b$. As in the proof of Lemma \ref{lem:objectsrepresentative}, we can take the cofiber $A'$ of a suitable map $\bigoplus \Sigma^a R[G]\to A$ to achieve that $A'$ is concentrated in $[a+1,b]$, and because of the connectivity of $Y$ the map $A\to Y$ extends over $A'$. Inductively, we can assume $A$ to be concentrated in $[0,b]$, and by performing the same argument on the dual diagram
\[
DY \to DA \to DX
\]
before dualizing back, we can actually assume $A$ to be concentrated in $[0,0]$. Since it is also zero in $\StMod^{\fin}(RG)$, we see by Lemma \ref{lem:objectsrepresentative} that $A$ is a retract of a finite sum of copies of $R[G]$, so $X\to Y$ also factors through such a finite sum as we wanted to prove.

We now show every morphism in $\StMod^\fin$ between objects concentrated in $[0,0]$ can be represented by an actual map in $\Fun(BG,\Mod_\omega(R))$.
By Lemma \ref{lem:mapsinquotient}, maps $X\to Y$ in $\StMod^\fin$ can be represented as zigzag
\[
\begin{tikzcd}
X\ar{dr} & & Y\ar{ld}{\sim}\\
 & Y' &
\end{tikzcd}
\]
of maps in $\Fun(BG,\Mod_\omega(R))$. Assume $Y'$ is concentrated in $[a,b]$ with $a<0< b$. By taking the cofiber of a suitable map $\bigoplus \Sigma R[G]\to Y'$ as in the proof of Lemma \ref{lem:objectsrepresentative}, we can replace $Y'$ by a new object, concentrated in $[a+1,b]$, without changing the morphism represented by the zigzag. Inductively, we may assume $Y'$ to be concentrated in $[0,b]$.

We now dualize to a diagram
\[
\begin{tikzcd}
 & DY'\ar{dl}{\sim}\ar{dr} & \\
DY & & DX\\
\end{tikzcd}
\]
with $DY'$ concentrated in $[-b,0]$ with $-b<0$. In particular, the fiber $F$ of $DY' \to DY$ is concentrated in $[-b,0]$. By again choosing a suitable map $\bigoplus\Sigma^{-b} R[G]\to F$ and taking the cofiber of the composite $\bigoplus \Sigma^{-b}R[G]\to DY'$, we can change the situation in order to make the fiber $F$ concentrated in $[-b+1,0]$. Inductively, we may assume the fiber of $DY'\to DY$ to be concentrated in $[0,0]$.
Dualizing back, we have represented the original morphism by a zigzag $X\to Y'\xleftarrow{\sim} Y$, where the right-hand map has cofiber concentrated in $[0,0]$.

This cofiber $P$ is also zero in $\StMod^{\fin}(RG)$, so it is a retract of a finite sum of copies of $R[G]$, and so we can split the cofiber sequence $Y\to Y' \to P$ in $\Fun(BG,\Mod_\omega(R))$. The original morphism is therefore represented by a zigzag $X\to Y\oplus P \leftarrow Y$. The composite $X\to Y\oplus P \to Y$ represents the same morphism, since in $\StMod^{\fin}(RG)$, the inclusion $Y\to Y\oplus P$ and the projection $Y\oplus P\to Y$ are inverse equivalences.
\end{proof}

We can also derive a uniqueness statement for representatives:

\begin{lemma}
\label{lem:representativesuniqueness}
Given $X,Y\in \Fun(BG,\Mod_\omega(R G))$ concentrated in $[0,0]$ and an equivalence $X\to Y$ in $\StMod^\fin(R G)$, there exist $P, Q$ both retracts of finite sums of $R[G]$, and an isomorphism $X\oplus P \simeq Y\oplus Q$ in $\Fun(BG,\Mod_\omega(R G))$ inducing the given equivalence.
\end{lemma}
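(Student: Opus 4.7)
The plan is a derived analogue of Schanuel's trick. Using the previous lemma, I will first represent the equivalence $\alpha$ by an actual morphism $f : X \to Y$ in $\Fun(BG, \Mod_\omega(RG))$, and its inverse $\alpha^{-1}$ by an actual morphism $g : Y \to X$. The composite $gf : X \to X$ then agrees with $\id_X$ in $\StMod^{\fin}(RG)$, so by the description of morphisms in the previous lemma, the difference $gf - \id_X$ factors through a finite sum of copies of $R[G]$, say as $X \xto{p} R[G]^n \xto{h} X$.

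The key construction is then the block morphism $F := (f, p) : X \to Y \oplus R[G]^n$ together with the candidate retraction $G := (g, -h) : Y \oplus R[G]^n \to X$. A direct calculation gives $G \circ F = gf - hp = \id_X$, so $F$ is a split monomorphism in $\Fun(BG, \Mod_\omega(RG))$. In any stable (hence additive) $\infty$-category a split monomorphism realizes its target as the biproduct of its source with the fiber of the retraction, so with $Q$ defined as the fiber of $G$ the resulting map $X \oplus Q \to Y \oplus R[G]^n$ is an equivalence. This last point is not entirely formal in the $\infty$-categorical setting, and I would verify it by checking that the canonical map is an equivalence on homotopy groups, via the splitting of the long exact sequence of $Q \to Y \oplus R[G]^n \to X$ induced by $F$; this is the only step that requires any care.

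It remains to verify that $Q$ is of the required form and that the constructed equivalence induces $\alpha$. As a direct summand of $Y \oplus R[G]^n$, $Q$ is automatically concentrated in $[0, 0]$; and $Q$ vanishes in $\StMod^{\fin}(RG)$ because, after passing to the quotient, $h$ becomes null (factoring through $R[G]^n \in \langle R[G]\rangle$), so $G$ reduces to $g$, which represents the equivalence $\alpha^{-1}$ and hence has zero fiber. The second part of Lemma \ref{lem:objectsrepresentative} then exhibits $Q$ as a retract of a finite sum of copies of $R[G]$, so one may take $P := R[G]^n$ together with this $Q$. Restricted to $X$, the equivalence $X \oplus Q \simeq Y \oplus R[G]^n$ is $F = (f, p)$, whose $Y$-component is $f$; since the $R[G]^n$-summands on both sides vanish in $\StMod^{\fin}(RG)$, the induced morphism $X \to Y$ in the quotient is $f$, representing $\alpha$ as required.
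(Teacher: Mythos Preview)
Your argument is correct and follows the same Schanuel-type strategy as the paper, with a minor variation in how the splitting is obtained. The paper chooses an auxiliary map $X \to Q$ into a finite free module that is injective on $\pi_0$ (equivalently, whose dual surjects onto $\pi_0(DX)$); the combined map $X \to Q \oplus Y$ then has cofiber $P$ concentrated in $[0,0]$, so by Lemma~\ref{lem:objectsrepresentative} $P$ is a retract of a finite free module, and projectivity of $P$ splits the cofiber sequence after the fact. You instead manufacture the retraction directly from a representative $g$ of $\alpha^{-1}$, so the splitting is immediate and the complement is concentrated in $[0,0]$ simply as a summand of $Y\oplus R[G]^n$. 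Both routes rest on the preceding lemma to represent morphisms and on Lemma~\ref{lem:objectsrepresentative} to recognise the complement; your version trades the $\pi_0$-injectivity/connectivity argument for having to represent both $\alpha$ and $\alpha^{-1}$. One cosmetic point: with your labelling the isomorphism reads $X \oplus Q \simeq Y \oplus P$ rather than $X \oplus P \simeq Y \oplus Q$, so swap the names of $P$ and $Q$ at the end.
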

\begin{proof}
We can represent the given equivalence by a map $X\to Y$. Picking a suitable sum $Q$ of copies of $R[G]$, and a map $X\to Q$ inducing an injection on $\pi_0$ (which we can do by dually surjecting onto $\pi_0(DX)$), we obtain a map $X\to Q \oplus Y$ which is injective on $\pi_0$. As a result, its cofiber $P$ is concentrated in $[0,0]$, and zero in $\StMod^\fin(R G)$ since the map $X\to Y$ was an equivalence. So $P$ is a retract of a sum of copies of $R[G]$, and thus the cofiber sequence
\[
X \to Q \oplus Y \to P
\]
splits, yielding the desired result.
\end{proof}

We now explain how the stable module category with coefficients in the sphere spectrum $\bS$ relates to the stable module category with coefficients in $\Z$. Note that we have a functor $\Mod_\omega(\bS)\to \Mod_{\omega}(\bZ)$ given by basechange, which induces a functor $\StMod^{\fin}(\bS G) \to \StMod^{\fin}(\bZ G)$. More generally, for a connective commutative ring spectrum $R$, we have a basechange functor from $R\to \pi_0(R)$. In analogy with the situation for $R=\bS$, where this basechange corresponds to taking homology (or more precisely, chains), we will denote this functor simply by $H$.  

\begin{lemma}
\label{lem:liftinghomology}
Let $R$ be connective and let $X$ be an object of $\StMod^{\fin}(R G)$, let $M\in \Fun(BG,\Mod_\omega(\pi_0(R)))$ be concentrated in degree $[0,0]$, i.e. a finitely generated projective $\pi_0(R)$-module with a $G$-action, and assume we have an equivalence $M \simeq HX$ in $\StMod^{\fin}(\pi_0(R) G)$. Then there exists a representative $X'\in\StMod^\fin(R G)$ of $X$ with $HX'\simeq M$ in $\Fun(BG,\Mod_\omega(\pi_0(R)))$. More precisely, there exists $X'\in \Fun(BG,\Mod_{\omega}(R))$ with $HX'\simeq M$ in $\Fun(BG,\Mod_{\omega}(\pi_0(R)))$ and an equivalence $X'\simeq X$ in $\StMod^{\fin}(R G)$ lifting the given equivalence $M\simeq HX$.
\end{lemma}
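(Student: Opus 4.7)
The plan is to produce $X'$ by starting from a convenient representative of $X$ and modifying it by a stabilisation that forces $H(-)$ to land on $M$ on the nose. By Lemma~\ref{lem:objectsrepresentative} choose $X_0 \in \Fun(BG, \Mod_\omega(R))$ concentrated in $[0,0]$ representing $X$; then $HX_0$ is concentrated in $[0,0]$ over $\pi_0(R)$, and the composite $HX_0 \simeq HX \simeq M$ is an equivalence in $\StMod^\fin(\pi_0(R) G)$ between two objects concentrated in $[0,0]$. Apply Lemma~\ref{lem:representativesuniqueness} in the $\pi_0(R)$-linear setting to refine this to an honest isomorphism $\psi \colon HX_0 \oplus P \xrightarrow{\sim} M \oplus Q$ in $\Fun(BG, \Mod_\omega(\pi_0(R)))$, with $P$ and $Q$ retracts of finite sums of copies of $\pi_0(R)[G]$ and inducing the given equivalence.

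Next, lift this algebraic data along $H$. For connective $R$, any retract $P$ of a finite sum of copies of $\pi_0(R)[G]$ lifts to a retract $\tilde P$ of the corresponding finite sum of copies of $R[G]$ in $\Fun(BG, \Mod_\omega(R))$ with $H\tilde P \simeq P$: the idempotent cutting out $P$ lives in $\pi_0$ of the connective $E_1$-ring $\End_{R[G]}(R[G]^m) \simeq R[G]^{m\times m}$, lifts to a coherent idempotent, and splits in the idempotent-complete category $\Fun(BG, \Mod_\omega(R)) \simeq \mathrm{Perf}(R[G])$. Choose such lifts $\tilde P, \tilde Q$. The split inclusion $\iota \colon Q \hookrightarrow M \oplus Q \xrightarrow{\psi^{-1}} HX_0 \oplus P$ then lifts to a map $\tilde\iota \colon \tilde Q \to X_0 \oplus \tilde P$ in $\Fun(BG, \Mod_\omega(R))$, since $\tilde Q$ being a retract of sums of $R[G]$ gives
\[
\pi_0 \Map_{\Fun(BG, \Mod_\omega(R))}(\tilde Q, X_0 \oplus \tilde P) \cong \Hom_{\pi_0(R)[G]}(Q, HX_0 \oplus P).
\]

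Set $X' := \cof(\tilde\iota)$. Applying $H$ to the defining cofibre sequence yields $HX' \simeq \cof(\iota)$, and composing with $\psi$ identifies this with $\cof(Q \hookrightarrow M \oplus Q) \simeq M$, so $HX' \simeq M$ as required. In $\StMod^\fin(R G)$ we then have the zigzag $X \simeq X_0 \to X_0 \oplus \tilde P \to X'$ whose two arrows are mod $\langle R[G]\rangle$ equivalences, since $\tilde P, \tilde Q \in \langle R[G]\rangle$. Applying $H$ to this zigzag gives $HX \simeq HX_0 \to HX_0 \oplus P \to M$, and by the compatibility property in Lemma~\ref{lem:representativesuniqueness} the resulting equivalence in $\StMod^\fin(\pi_0(R) G)$ is precisely the originally given $HX \simeq M$. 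The principal difficulty is the essential surjectivity of $H$ on finitely generated projective modules, i.e.\ lifting idempotents from $\pi_0(R)[G]^{m\times m}$ to $R[G]^{m\times m}$; granting this standard fact about connective ring spectra, the remainder is an $R$-linear translation of the stabilisation argument underlying Lemma~\ref{lem:representativesuniqueness}.
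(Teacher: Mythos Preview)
Your proof is correct and follows essentially the same route as the paper's. Both arguments reduce to a representative $X_0$ concentrated in $[0,0]$, invoke Lemma~\ref{lem:representativesuniqueness} to realise the given stable equivalence as an honest isomorphism after stabilising by projectives, lift the projective summands and the relevant inclusion along $H$ using idempotent lifting and the Hurewicz-type identification of $\pi_0$ of mapping spaces, and then take a cofibre; the only cosmetic difference is that the paper lifts $P\to Q\oplus HX$ while you lift $Q\to HX_0\oplus P$, which amounts to swapping the roles of $P$ and $Q$ and the direction of the equivalence.
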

\begin{proof}
Without limiting generality, we can assume $X$ to be already represented by an object of $\Fun(BG,\Mod_\omega(R))$ concentrated in $[0,0]$. Then $HX$ is also concentrated in $[0,0]$, and the given equivalence $M\to HX$ comes from an isomorphism in $\Fun(BG, \Mod_\omega(\pi_0(R)))$ 
\[
P\oplus M \to Q \oplus HX.
\]
We thus have a map $P\to Q\oplus HX$ with cofiber $M$. We now observe that we can lift the objects $P$ and $Q$ to objects of $\Fun(BG,\Mod_\omega(R))$. Namely, free modules $\bigoplus \pi_0(R)[G]$ clearly lift to free modules $\bigoplus R[G]$, and all maps between them lift uniquely up to homotopy due to a version of Hurewicz. So if $P$ and $Q$ are given as retracts of free $\pi_0(R)[G]$ modules, we can lift the corresponding idempotent and obtain lifts $\widetilde{P}$ and $\widetilde{Q}$. Similarly, homotopy classes of maps
\[
\widetilde{P}\to \widetilde{Q}\oplus X
\]
correspond precisely to maps $P\to Q \oplus HX$. We can first check this more general statement for $P$ a free module, where it follows by a Hurewicz argument, and then pass to retracts.

We now let $X'$ be the cofiber of the unique lift $\widetilde{P}\to \widetilde{Q} \oplus X$, this has the desired properties.
\end{proof}

We can express this previous fact in a more categorical way:

\begin{definition}
We call a diagram
\[
\begin{tikzcd}
\cA\rar\dar& \cB\dar\\
\cC \rar & \cD
\end{tikzcd}
\]
of $\infty$-categories $1$-cartesian if the functor $\cA \to \cC\times_\cD \cB$ is essentially surjective and $1$-connective on mapping spaces, i.e. if the maps
\[
\Map_{\cA}(X,Y) \to \Map_{\cC\times_{\cD} \cB}(X,Y)
\]
are isomorphisms on $\pi_0$ and surjective on $\pi_1$. In particular, a $1$-cartesian diagram of $\infty$-categories leads to a pullback diagram of their homotopy categories.
\end{definition}

\begin{proposition}
\label{prop:pullbackdegreen}
Let $R$ be a connective commutative ring spectrum, and let $\Mod^n_\omega(R)$ denote the full subcategory of $\Mod_\omega(R)$ on objects concentrated in $[n,n]$. Then the diagram
\[
\begin{tikzcd}
\Fun(BG, \Mod^n_\omega(R)) \rar\dar & \StMod^\fin(R G)\dar\\
\Fun(BG, \Mod^n_\omega(\pi_0(R)))\rar & \StMod^\fin(\pi_0(R) G)
\end{tikzcd}
\]
is $1$-cartesian.
\end{proposition}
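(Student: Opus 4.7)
The plan is to shift by $n$ to reduce to the case $n = 0$ — all four corners of the diagram and all four functors commute with $\Sigma^n$, which is an autoequivalence of each category — and then verify the two defining properties of $1$-cartesian separately. Write $\cA = \Fun(BG, \Mod^0_\omega(R))$, $\cB = \StMod^\fin(R G)$, $\cC = \Fun(BG, \Mod^0_\omega(\pi_0(R)))$, and $\cD = \StMod^\fin(\pi_0(R) G)$; I have to show $\cA \to \cC \times_\cD \cB$ is essentially surjective and $1$-connective on mapping spaces.

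Essential surjectivity is essentially immediate from Lemma \ref{lem:liftinghomology}. An object of $\cC \times_\cD \cB$ is a triple $(M, X, \alpha)$ with $M \in \cC$, $X \in \cB$, and $\alpha\colon M \simeq HX$ in $\StMod^\fin(\pi_0(R) G)$, and the lemma already produces $X' \in \Fun(BG, \Mod_\omega(R))$ together with $HX' \simeq M$ in $\Fun(BG, \Mod_\omega(\pi_0(R)))$ and $X' \simeq X$ in $\StMod^\fin(R G)$ refining $\alpha$. The only remaining point is that $X'$ actually lies in the full subcategory $\cA$, i.e., is concentrated in $[0, 0]$. This follows from a Nakayama-type observation for connective $R$ and perfect $R$-modules: the fiber of $X' \to HX'$ is $X' \otimes_R \tau_{\geq 1} R$, which is one degree more connective than $X'$ itself, so the minimal nonvanishing $\pi_i X'$ maps injectively to $\pi_i HX'$. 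Hence the connectivity of $X'$ equals that of $HX'\simeq M$; applying the same argument to $DX'$ (whose $H$-basechange is $DM$ since $X'$ is dualizable) gives coconnectivity, and $X'$ is concentrated in $[0, 0]$.

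For the mapping space condition, I would use Lemma \ref{lem:mappingtate} to write $\map_\cB(X, Y) = \map_{\Mod(R)}(X, Y)^{tG}$ and $\map_\cD(HX, HY) = \map_{\Mod(\pi_0(R))}(HX, HY)^{tG}$, while the mapping spectra in $\Fun(BG, \Mod_\omega(-))$ are the usual homotopy fixed points $\map^{hG}$. The fiber of $\Map_\cA(X, Y) \to \Map_{\cC \times_\cD \cB}(X, Y)$ is then $\Omega^\infty$ of the total fiber of the commutative square
\[
\begin{tikzcd}
\map_{\Mod(R)}(X, Y)^{hG} \rar \dar & \map_{\Mod(R)}(X, Y)^{tG}\dar \\
\map_{\Mod(\pi_0(R))}(HX, HY)^{hG} \rar & \map_{\Mod(\pi_0(R))}(HX, HY)^{tG},
\end{tikzcd}
\]
which by the norm fiber sequence $(-)_{hG} \to (-)^{hG} \to (-)^{tG}$ applied rowwise equals $F_{hG}$ for $F = \operatorname{fib}(\map_{\Mod(R)}(X, Y) \to \map_{\Mod(\pi_0(R))}(HX, HY))$. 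Since $X, Y$ are concentrated in $[0, 0]$, they are retracts of finite sums of copies of $R$, so $\map_{\Mod(R)}(X, Y) \simeq DX \otimes_R Y$ is a retract of a finite sum of copies of $R$; thus $F$ is a retract of a finite sum of copies of $\operatorname{fib}(R \to H\pi_0(R)) \simeq \tau_{\geq 1} R$ and is $1$-connective. For finite $G$, homotopy orbits preserve $1$-connectivity (the HOSS has no contributions to $\pi_0$), so $\pi_0 F_{hG} = 0$, which is exactly isomorphism on $\pi_0$ and surjectivity on $\pi_1$ for the map of mapping spaces.

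The hard conceptual content is already packaged in Lemmas \ref{lem:liftinghomology} and \ref{lem:mappingtate}, so what remains is mostly bookkeeping. The one subtlety worth flagging is that Lemma \ref{lem:liftinghomology} only delivers a lift in $\Fun(BG, \Mod_\omega(R))$, and one must separately argue that this lift lies in the full subcategory $\Mod^0_\omega(R)$; this is the unique place where the connectivity hypothesis on $R$ plays an essential role.
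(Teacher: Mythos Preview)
Your proof is correct and follows the same route as the paper: reduce to $n=0$, invoke Lemma~\ref{lem:liftinghomology} for essential surjectivity, and use Lemma~\ref{lem:mappingtate} plus the norm cofiber sequence to identify the total fiber on mapping spectra with $F_{hG}$, which is connected since $F$ is a retract of copies of $\tau_{\geq 1}R$. Your explicit check that the lift $X'$ produced by Lemma~\ref{lem:liftinghomology} actually lands in $\Mod^0_\omega(R)$ is a detail the paper glosses over; your handling of it is fine (and the same argument appears later in the paper, in the proof of Lemma~\ref{lem:picardmodr}). One minor inaccuracy in your closing commentary: connectivity of $R$ is not used \emph{only} in that step---it is equally essential in the mapping-space argument, where you need $\operatorname{fib}(R\to\pi_0 R)$ to be $1$-connective.
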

\begin{proof}
By applying a suitable shift, we can assume $n=0$.
The functor from the upper left corner into the actual pullback is essentially surjective: This is precisely the statement of Lemma \ref{lem:liftinghomology}. For the condition on mapping spaces we need to show that the total homotopy fiber of
\[
\begin{tikzcd}
\map_{\Fun(BG, \Mod^n_{\omega}(R))}(X,Y) \rar\dar & \map_{\StMod^{\fin}(R G)}(X,Y)\dar\\
\map_{\Fun(BG, \Mod^n_{\omega}(\pi_0(R))}(HX,HY) \rar & \map_{\StMod^{\fin}(\pi_0(R) G)}(HX,HY)
\end{tikzcd}
\]
is connected.
By Lemma \ref{lem:mappingtate}, the horizontal fibers are given by $\map_R(X,Y)_{hG}$ and $\map_{\pi_0(R)}(HX,HY)_{hG}$ respectively. Since homotopy orbits preserve connectivity, it is now sufficient to check that the fiber of
\[
\map_R(X,Y) \to \map_{\pi_0(R)}(HX,HY)
\]
has connected fiber. But since $X$ and $Y$ are retracts of sums of copies of $R$, this follows at once from the fact that the fiber of $R\to \pi_0(R)$ is connected.
\end{proof}

While this does not control the entire category $\Fun(BG, \Mod_\omega(R))$, it does control its Picard groupoid. We first observe that as long as $\pi_0(R)$ is a domain, each invertible object in $\Fun(BG, \Mod_{\omega}(R))$ is concentrated in a single degree, this is a well-known fact about Picard groups of connective ring spectra which we recall here:

\begin{lemma}
\label{lem:picardmodr}
For a connective commutative ring spectrum $R$ whose $\pi_0(R)$ is a domain,
let $X\in \Fun(BG,\Mod_\omega(R))$ be invertible. Then the underlying $R$-module of $X$ is concentrated in $[n,n]$ for some $n$.
\end{lemma}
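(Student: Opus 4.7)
The plan is to reduce to the non-equivariant case and then combine two observations: an invertible perfect complex over a commutative domain is concentrated in a single degree, and the connectivity of a perfect $R$-module is detected after tensoring down to $\pi_0(R)$.

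First, the forgetful functor $\Fun(BG, \Mod_\omega(R)) \to \Mod_\omega(R)$ is symmetric monoidal (tensor product and unit are both computed on underlying $R$-modules), so it preserves invertible objects. Thus we may assume $G = e$ and must show that an invertible $X\in \Mod_\omega(R)$ is concentrated in $[n,n]$ for some integer $n$.

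Writing $A = \pi_0(R)$, consider $X \otimes_R A \in \Mod_\omega(A)$, which is again invertible. I would first observe that any invertible perfect complex $P$ over a commutative local ring $B$ with residue field $k$ is a shift of $B$: lift a generator of $\pi_n(P\otimes_B k)$ to a map $\Sigma^n B \to P$; its cofiber is a perfect complex vanishing modulo the maximal ideal, hence zero by a Nakayama argument applied to its finitely generated homotopy groups. Applying this to $(X\otimes_R A)_\mathfrak{p}$ for each prime $\mathfrak{p}$ of $A$ yields $(X\otimes_R A)_\mathfrak{p}\simeq \Sigma^{n(\mathfrak{p})} A_\mathfrak{p}$. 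Since $A$ is a domain, every prime contains $(0)$, and further localisation to the fraction field $K$ forces $n(\mathfrak{p}) = n((0)) =: n$ to be independent of $\mathfrak{p}$. Therefore $H_i(X\otimes_R A)$ has trivial localisation at every prime for $i\neq n$, so $X\otimes_R A$ is concentrated in degree $n$.

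Finally, transfer this to $X$ via the K\"unneth edge identification: for any perfect $R$-module $M$ which is $c$-connective with $\pi_c(M)\neq 0$, one has $\pi_c(M\otimes_R A) = \pi_c(M)\otimes_A A = \pi_c(M)$, so connectivity is both preserved and reflected by $-\otimes_R A$. Applied to $X$, this gives that $X$ is $n$-connective. Applied to $DX$, using that $DX\otimes_R A \simeq D(X\otimes_R A)$ is concentrated in degree $-n$ (duality commutes with base change for perfect modules), it gives that $DX$ is $(-n)$-connective. Hence $X$ is concentrated in $[n,n]$ by definition. The only subtlety is confirming that the classical inputs --- Nakayama for homotopy groups of a perfect complex, duality-base change, and the K\"unneth edge identification --- all go through in the setting of a connective commutative ring spectrum and perfect modules, but these are standard.
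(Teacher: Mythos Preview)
Your proof is correct and follows essentially the same strategy as the paper's: reduce to $\pi_0(R)$ via a connectivity argument, settle the local case, then globalize using that all localizations of a domain map to the fraction field. The only variation is in the local step, where you lift a generator from the residue field and kill the cofiber by Nakayama, whereas the paper argues that the tensor product of the bottom homotopy groups of $X$ and $DX$ would land in a negative degree of $X\otimes DX \simeq R$; both are standard.
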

\begin{proof}
Since $R\to \pi_0(R)$ has $1$-connective fiber, for any $R$-module $X$ the connectivity of $X$ agrees with the connectivity of the $\pi_0(R)$-module $\pi_0(R)\otimes_R X$. In particular, $X$ is concentrated in $[a,b]$ if and only if the basechange $\pi_0(R)\otimes_R X$ is concentrated in $[a,b]$ as a $\pi_0(R)$-module. We can therefore reduce to the case of $R$ an ordinary ring.

We now first check the claim for a \emph{local} ring $R$, with maximal ideal $\mathfrak{m}$. Let $X$ be invertible, and let $a$ be maximal and $b$ minimal with $X$ concentrated in $[a,b]$, i.e. $\pi_a(X)\neq 0$ and $\pi_{-b}(DX)\neq 0$. We want to show $a=b$. Assume on the contrary that $a<b$. Then $X\otimes DX \simeq R$, and so
\[
0 \simeq \pi_{a-b}(R) \simeq \pi_{a-b}(X\otimes DX) \simeq \pi_a(X)\otimes_{R} \pi_{-b}(DX).
\]
But over local rings the tensor product of modules has no zero divisors, which one can check after basechange to the field $R/\mathfrak{m}$ because of the Nakayama lemma. So $\pi_a(X)=0$ or $\pi_{-b}(DX)=0$, a contradiction.

Now let $R$ be an ordinary ring, which is a domain but not necessarily local, and let $X$ be invertible. The basechange to each localisation $R_{(\mathfrak{m})}$ is concentrated in $[n_{\mathfrak{m}}, n_{\mathfrak{m}}]$ for some $n_{\mathfrak{m}}$, by basechanging further to the fraction field $R_{(0)}$ we see that all $n_{\mathfrak{m}}$ agree, call that number $n$. It follows that the basechange of $X$ to each localization $R_{\mathfrak{m}}$ is concentrated in $[n,n]$. This means that $\pi_i(X)$ for each $i<n$ is an $R$-module which vanishes after basechange to each $R_{(\mathfrak{m})}$, i.e. it is zero. The same argument shows that $\pi_i(DX)=0$ for $i<-n$. So $X$ is concentrated in $[n,n]$.
\end{proof}

\begin{theorem}
\label{thm:pic1cartesian}
Let $R$ be a connective commutative ring spectrum with $\pi_0(R)$ a domain.
We have the following $1$-cartesian diagram of Picard groupoids:
\[
\begin{tikzcd}
\cPic(\Fun(BG, \Mod_\omega(R))) \rar\dar & \cPic(\StMod^\fin(R G))\dar\\
\cPic(\Fun(BG, \Mod_\omega(\pi_0(R))))\rar & \cPic(\StMod^\fin(\pi_0(R) G))
\end{tikzcd}
\]
\end{theorem}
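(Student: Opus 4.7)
The plan is to reduce the statement to Proposition \ref{prop:pullbackdegreen} by exploiting Lemma \ref{lem:picardmodr}: since both $R$ and $\pi_0(R)$ have $\pi_0$ a domain, every invertible object of $\Fun(BG, \Mod_\omega(R))$ (resp.\ $\Fun(BG, \Mod_\omega(\pi_0(R)))$) is concentrated in some single degree $[n,n]$. The Picard groupoids on the left of the square thus decompose as disjoint unions indexed by $n$ of full subgroupoids sitting inside $\Fun(BG, \Mod^n_\omega(R))$ and $\Fun(BG, \Mod^n_\omega(\pi_0(R)))$ respectively, so we can compare degree-by-degree.

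For essential surjectivity, let $(M, X, \iota)$ be an object of the homotopy pullback, with $\iota : M \simeq HX$ in $\StMod^\fin(\pi_0(R)G)$, and let $n$ be the degree in which $M$ is concentrated. Applying Lemma \ref{lem:liftinghomology} after shifting by $-n$ and shifting back yields $X' \in \Fun(BG, \Mod_\omega(R))$ concentrated in $[n,n]$ with $HX' \simeq M$ and an equivalence $X' \simeq X$ in $\StMod^\fin(RG)$ lifting $\iota$. It then remains to verify that $X'$ is invertible in $\Fun(BG, \Mod_\omega(R))$. Since $X'$ is dualizable, it suffices to check that the evaluation $\epsilon : X' \otimes_R DX' \to R$ is an equivalence. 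By Lemma \ref{lem:concentratedbasics}(2), $X'$ is a retract of a finite sum of copies of $\Sigma^n R$, so $X' \otimes_R DX'$ is a retract of a finite sum of copies of $R$, hence concentrated in $[0,0]$. A map between bounded-below compact $R$-modules is an equivalence if and only if $H$ of it is (the fiber of $R \to \pi_0(R)$ is $1$-connective, so $HF=0$ forces the bounded-below $F$ to be zero by induction on connectivity). Applying $H$ to $\epsilon$ recovers the evaluation $M \otimes_{\pi_0(R)} DM \to \pi_0(R)$, which is an equivalence because $M$ is invertible; hence $\epsilon$ is an equivalence and $X'$ is invertible.

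For the mapping space condition, consider invertible $X, Y \in \Fun(BG, \Mod_\omega(R))$ of degrees $m$ and $n$. If $m \neq n$, both $\Map_{\cPic(\Fun(BG, \Mod_\omega(R)))}(X, Y)$ and its counterpart in the pullback are empty, since already $\Map_{\cPic(\Fun(BG, \Mod_\omega(\pi_0(R))))}(HX, HY) = \emptyset$ as any equivalence of invertible objects must preserve the degree. When $m = n$, both mapping spaces agree with those inside $\Fun(BG, \Mod^n_\omega(R))$, and Proposition \ref{prop:pullbackdegreen} gives that the ambient comparison map of mapping spaces is $1$-connective. Passing from the ambient mapping spaces to their equivalence subspaces is automatic: by the same bounded-below fiber argument as above, a map $f : X \to Y$ between objects concentrated in $[n,n]$ is an equivalence iff $Hf$ is, so the equivalence components on both sides of the comparison correspond bijectively on $\pi_0$, and $\pi_1$-surjectivity transfers to the equivalence subspaces since they are unions of connected components.

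The main obstacle I expect is the essential surjectivity step, namely promoting the cocycle data $(M, X, \iota)$ to a \emph{genuinely invertible} object of $\Fun(BG, \Mod_\omega(R))$ rather than merely a lift in the sense of Lemma \ref{lem:liftinghomology}. The key trick is that for an object concentrated in a single degree, the evaluation map lands between objects concentrated in $[0,0]$, and invertibility there can be detected after basechange to $\pi_0(R)$; this is exactly where the hypothesis that $\pi_0(R)$ is a domain enters, via Lemma \ref{lem:picardmodr}, to force $M$ into a single degree in the first place.
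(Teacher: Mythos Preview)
Your proof is correct and follows essentially the same route as the paper: use Lemma~\ref{lem:picardmodr} to confine invertible objects to the subcategories $\coprod_n \Fun(BG,\Mod^n_\omega(-))$, then appeal to Proposition~\ref{prop:pullbackdegreen}. The paper's version is terser---it simply asserts that the claim follows from the $1$-cartesian diagram of Proposition~\ref{prop:pullbackdegreen} once the Picard groupoids are identified with those of $\coprod_n \Fun(BG,\Mod^n_\omega(-))$---while you spell out explicitly the invertibility of the lifted object and the passage from full mapping spaces to equivalence subspaces (both of which rest on the conservativity of $H$ on objects concentrated in a single degree), details the paper leaves to the reader.
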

\begin{proof}
Every invertible object of $\Mod_\omega(R)$ is concentrated in $[n,n]$ for some $n$ by Lemma \ref{lem:picardmodr}. It follows that the Picard groupoids of $\Fun(BG, \Mod_\omega(R))$ and $\Fun(BG, 
\Mod_\omega(\pi_0(R))$ are contained in the symmetric-monoidal subcategories $\coprod_n \Fun(BG, \Mod^n_{\omega}(R))$ and $\coprod_n \Fun(BG, \Mod^n_{\omega}(\pi_0 R))$ (and therefore are also given as $\cPic$ of those). So the claim follows from the $1$-cartesian symmetric-monoidal diagram
\[
\begin{tikzcd}
\coprod_n \Fun(BG, \Mod^n_\omega(R)) \rar\dar & \StMod^\fin(R G)\dar\\
\coprod_n \Fun(BG, \Mod^n_\omega(\pi_0(R)))\rar & \StMod^\fin(\pi_0(R) G)
\end{tikzcd}
\]
obtained from Proposition \ref{prop:pullbackdegreen}.
\end{proof}

We obtain the following result controlling the Picard groupoid of $\Sp^G$:

\begin{theorem}
\label{thm:pictotal1cartesian}
Let $\cF$ be a family of orbits of $G$, and let $G/K\not\in\cF$ with $G/K'\in\cF$ for all $K'\subsetneq K$. Then we have a $1$-cartesian diagram
\[
\begin{tikzcd}
\cPic(\Sp_\omega^G/\cF) \rar\dar{H\Phi^K}& \cPic(\Sp^G_\omega/\cF\cup\{G/K\})\dar{H\overline{\Phi}^K}\\
\cPic(\Fun(BW_G(K), \Mod_\omega(\Z)))\rar & \cPic(\StMod^\fin(\Z W_G(K)))
\end{tikzcd}
\]
\end{theorem}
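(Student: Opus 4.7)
The plan is to paste together the pullback square of symmetric-monoidal $\infty$-categories from Theorem \ref{thm:pullback} with the $1$-cartesian square of Picard groupoids from Theorem \ref{thm:pic1cartesian}.

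First I would apply Theorem \ref{thm:pullback} to the data $\cF$, $G/K$ and invoke the fact (already cited from Mathew--Stojanoska in Section 2) that $\cPic$ preserves limits of symmetric-monoidal $\infty$-categories. This yields a genuine pullback square
\[
\begin{tikzcd}
\cPic(\Sp_\omega^G/\cF) \rar\dar{\Phi^K} & \cPic(\Sp_\omega^G/\cF\cup\{G/K\})\dar{\overline{\Phi}^K}\\
\cPic(\Fun(BW_G(K), \Sp_\omega)) \rar & \cPic(\StMod^{\fin}(\bS W_G(K))),
\end{tikzcd}
\]
which is in particular $1$-cartesian. Next I would apply Theorem \ref{thm:pic1cartesian} with $R=\bS$ (which is connective with $\pi_0(\bS) = \Z$ a domain) and with the group taken to be $W_G(K)$, producing a $1$-cartesian square whose top row agrees with the bottom row of the first square and whose bottom row is the desired target, with vertical maps induced by $H$. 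Stacking the two squares vertically gives the sought diagram, with vertical composites $H\Phi^K$ and $H\overline{\Phi}^K$, exactly as in the statement.

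The only nontrivial step is the pasting lemma: a strict pullback square stacked on top of a $1$-cartesian square yields a $1$-cartesian square. Writing $A \simeq C\times_D B$ for the top and $C\to E\times_F D$ for the comparison of the bottom, the outer comparison $A \to E\times_F B$ identifies, under the natural equivalence $E\times_F B \simeq (E\times_F D)\times_D B$, with the base change of $C \to E\times_F D$ along $B\to D$. Since pullbacks of $\infty$-categories preserve both essential surjectivity and $1$-connectivity on mapping spaces, the outer map inherits these properties. I expect no real obstacle beyond this bookkeeping; the substantive content of the theorem lives entirely in Theorems \ref{thm:pullback} and \ref{thm:pic1cartesian}.
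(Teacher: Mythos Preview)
Your proposal is correct and follows essentially the same approach as the paper: apply $\cPic$ to the pullback of Theorem~\ref{thm:pullback} (using that $\cPic$ preserves limits), then paste with the $1$-cartesian square of Theorem~\ref{thm:pic1cartesian} for $R=\bS$ and group $W_G(K)$. Your explicit verification of the pasting lemma is a welcome addition, as the paper leaves this step implicit.
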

\begin{proof}
The pullback diagram from Theorem \ref{thm:pullback} yields a pullback diagram after applying $\cPic$, since $\cPic$ preserves limits, see \cite[Prop. 2.2.3]{mathewstojanoska}.
We can paste this with the $1$-cartesian diagram from Theorem \ref{thm:pic1cartesian}, resulting in the desired diagram.
\end{proof}

In particular, an invertible element $\widetilde{X}$ of $\Sp^G/\cF$ is given precisely by an invertible element $X$ of $\Sp^G/\cF\cup\{G/K\}$, a candidate for the homology $H\Phi^K(\widetilde{X})$ of the geometric fixed points, and an equivalence in $\StMod^\fin(\Z W_G(K))$ between the images. The objects of $\cPic(\Fun(BW_G(K), \Mod_\omega(\Z)))$ are, for example by Lemma \ref{lem:picardmodr}, precisely classified by a dimension and a homomorphism $W_G(K)\to \{\pm 1\}$ describing the action of $W_G(K)$ on $\Z[n]$. The functor $\overline{\Phi}^K$ can be described explicitly on an object $X\in \Sp^G_\omega/\cF\cup\{G/K\}$ as choosing some (not necessarily invertible) lift $\widetilde{X}\in\Sp_\omega^G/\cF$ and then considering the homology $H\Phi^K \widetilde{X}$ as an object in $\StMod^\fin(\Z W_G(K))$.

For $R_G$ as in Theorem \ref{thm:pullbackcoeff}, we admit a similar statement:

\begin{theorem}
Assume $R$ is connective and $\pi_0(R)$ is a domain. Then, with assumptions as in Theorem \ref{thm:pictotal1cartesian}, we have a $1$-cartesian diagram
\[
\begin{tikzcd}
\cPic(\Mod_\omega(R_G)/\cF) \rar\dar{H\Phi^K}& \cPic(\Mod_\omega(R)/\cF\cup\{G/K\})\dar{H\overline{\Phi}^K}\\
\cPic(\Fun(BW_G(K), \Mod_\omega(\pi_0(R))))\rar & \cPic(\StMod^\fin(\pi_0(R) W_G(K)))
\end{tikzcd}
\]
\end{theorem}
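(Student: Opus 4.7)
The plan is to run the proof of Theorem \ref{thm:pictotal1cartesian} almost verbatim, with the sphere spectrum $\bS$ replaced by $R$. Applying $\cPic$ to the pullback of symmetric-monoidal stable $\infty$-categories from Theorem \ref{thm:pullbackcoeff}, and using that $\cPic$ preserves limits of symmetric-monoidal $\infty$-categories by \cite[Prop. 2.2.3]{mathewstojanoska}, produces a genuine pullback square of Picard $\infty$-groupoids whose top row is
\[
\cPic(\Mod_\omega(R_G)/\cF) \to \cPic(\Mod_\omega(R_G)/\cF\cup\{G/K\})
\]
and whose bottom row is
\[
\cPic(\Fun(BW_G(K), \Mod_\omega(R))) \to \cPic(\StMod^\fin(R\, W_G(K))),
\]
after identifying the lower right corner via Definition \ref{def:stmod}.

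Next, I would apply Theorem \ref{thm:pic1cartesian} to the ring spectrum $R$ and the group $W_G(K)$; the hypotheses that $R$ is connective and that $\pi_0(R)$ is a domain, imposed in the present theorem, are precisely the assumptions of that result. This yields a $1$-cartesian square whose top row agrees with the bottom row of the pullback square above, and whose bottom row is
\[
\cPic(\Fun(BW_G(K), \Mod_\omega(\pi_0 R))) \to \cPic(\StMod^\fin(\pi_0(R)\, W_G(K))).
\]

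Pasting these two squares vertically along their common row then produces the desired diagram as its outer boundary. To conclude, I would invoke the pasting principle for $1$-cartesian squares: the comparison functor from the top-left corner into the pullback of the outer cospan can be exhibited as a base change of the comparison functor witnessing that the bottom square is $1$-cartesian, along the map from the top-right corner down to the bottom-right corner. Since both essential surjectivity and $1$-connectivity on mapping spaces are preserved by base change of functors between $\infty$-categories, the outer rectangle is again $1$-cartesian, as desired.

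The only genuine verification required is this final pasting step, which is the routine argument already used implicitly in the proof of Theorem \ref{thm:pictotal1cartesian}; no new obstacle arises beyond what is addressed there, and in particular no further use is made of the connectivity of $R$ or the integral-domain hypothesis on $\pi_0(R)$ besides their role in invoking Theorems \ref{thm:pullbackcoeff} and \ref{thm:pic1cartesian}.
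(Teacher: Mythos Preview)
Your proposal is correct and follows exactly the approach implicit in the paper: apply $\cPic$ to the pullback square of Theorem \ref{thm:pullbackcoeff}, paste with the $1$-cartesian square of Theorem \ref{thm:pic1cartesian} for $R$ and $W_G(K)$, and deduce that the outer rectangle is $1$-cartesian. The only remark is that your justification of the pasting step is cleanest once one notes that the Picard $\infty$-groupoids involved are spaces, so the comparison functor being essentially surjective and $1$-connective on mapping spaces amounts to a connectivity condition on a map of spaces, which is indeed stable under base change.
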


We are now able to prove Theorem \ref{thm:linearisationintro}.
\begin{theorem}
\label{thm:linearisation}
The map $\Pic(\Sp^G) \to \Pic(\Mod(H\Z_G))$ is an isomorphism.
\end{theorem}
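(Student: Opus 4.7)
The plan is to apply the coefficient version of Theorem \ref{thm:pictotal1cartesian} (the unnumbered theorem directly preceding this statement) simultaneously to $R = \bS$ and to $R = H\Z$, and to exploit that both coefficient rings have $\pi_0 = \Z$. Since the bottom row of the $1$-cartesian diagram for $R$ only involves $\Fun(BW_G(K), \Mod_\omega(\pi_0(R)))$ and $\StMod^{\fin}(\pi_0(R) W_G(K))$, the bottom rows of the diagrams for $\bS$ and $H\Z$ will coincide. The unit map $\bS \to H\Z$ inflates to $\bS_G \to H\Z_G$ and provides symmetric-monoidal basechange functors $\Sp^G_\omega/\cF \to \Mod_\omega(H\Z_G)/\cF$ for each family $\cF$. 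After applying $\cPic$, this gives a natural transformation between the two $1$-cartesian squares which is the identity on the bottom row.

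Next I will induct downward on $\cF$, starting from $\cF = \Orb(G)$. In this base case both quotients $\Sp^G_\omega/\Orb(G)$ and $\Mod_\omega(H\Z_G)/\Orb(G)$ collapse to zero categories (the thick ideal being quotiented contains the unit $\Sigma^\infty_+ G/G$), so the Picard groupoids are contractible and the comparison map is trivially an equivalence. For the inductive step, given $\cF \subsetneq \Orb(G)$, I will pick a subgroup $K$ with $G/K \notin \cF$ and $G/K' \in \cF$ for all $K' \subsetneq K$, and apply the $1$-cartesian diagram relating $\cF$ to $\cF \cup \{G/K\}$. Taking $\pi_0$ converts each $1$-cartesian square into an honest pullback square of ordinary Picard groups, and since three of the four comparison arrows between the two pullback squares are isomorphisms (the top-right by induction, the two lower ones as they are identities), the remaining top-left arrow must also be an isomorphism.

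After reaching $\cF = \emptyset$ the induction yields $\Pic(\Sp^G_\omega) \cong \Pic(\Mod_\omega(H\Z_G))$. Since invertible objects in both $\Sp^G$ and $\Mod(H\Z_G)$ are dualizable and hence compact (as the units $\bS_G$ and $H\Z_G$ are compact), this upgrades to the desired $\Pic(\Sp^G) \cong \Pic(\Mod(H\Z_G))$.

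The main subtlety I anticipate is justifying carefully that the naturality of the coefficient version of Theorem \ref{thm:pictotal1cartesian} in $R$ really produces the identity on the bottom row, rather than merely an equivalence, so that the induction can proceed without keeping track of extra coherence data. This should reduce to observing that $\Fun(BW_G(K), \Mod_\omega(\pi_0(R)))$, $\StMod^{\fin}(\pi_0(R) W_G(K))$, and the natural functor between them all depend on $R$ only through $\pi_0(R)$, and that the basechange along $\bS_G \to H\Z_G$ fits into this picture as the identity after passing to $\pi_0$.
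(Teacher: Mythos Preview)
Your approach is exactly the paper's: induct on $\cF$ using the $1$-cartesian square of Theorem~\ref{thm:pictotal1cartesian} and its coefficient version, exploiting that the bottom row depends only on $\pi_0(R)=\Z$. The paper's proof is a single sentence (``objects on both sides are described by the same data''), and your write-up is a faithful unpacking of that sentence.

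One step, however, is not correct as stated. You write that ``taking $\pi_0$ converts each $1$-cartesian square into an honest pullback square of ordinary Picard groups''. This is false in general: a $1$-cartesian square of Picard $\infty$-groupoids identifies the upper-left corner with the honest homotopy pullback on $\pi_0$ \emph{and} $\pi_1$, but $\pi_0$ of a homotopy pullback of grouplike $E_\infty$-spaces is not the set-theoretic pullback of $\pi_0$'s; it also depends on the $\pi_1$'s (automorphisms of the unit). So knowing only that the top-right comparison map is an isomorphism on $\pi_0$ is not enough to run the $3$-out-of-$4$ argument.

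The fix is painless: strengthen the inductive hypothesis to ``the comparison $\cPic(\Sp^G_\omega/\cF)\to\cPic(\Mod_\omega(H\Z_G)/\cF)$ is an isomorphism on $\pi_0$ and a surjection on $\pi_1$''. Since the two lower corners $C,D$ are literally identical for $\bS$ and $H\Z$, the fiber of the induced map between the honest pullbacks $B\times_D C\to B'\times_D C$ equals the fiber of $B\to B'$ at the upper-right corner (fibers commute with fibers in spectra). Hence if $B\to B'$ has $1$-connected fiber (iso on $\pi_0$, surjection on $\pi_1$), so does the map of pullbacks, and then so does the upper-left comparison by the $1$-cartesian property. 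This version of the induction closes, and the base case $\cF=\Orb(G)$ is still trivial. The paper's one-line proof glosses over the same point.
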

\begin{proof}
We prove inductively that $\Pic(\Sp^G_\omega/\cF)\to \Pic(\Mod_\omega(H\Z_G)/\cF)$ is an isomorphism. This is clear, since in the natural $1$-cartesian diagram of Theorem \ref{thm:pictotal1cartesian} objects on both sides are described by the same data.
\end{proof}

\section{Localisation}
In this section, we discuss how the stable module categories $\StMod^\fin(\Z G)$ are related to completed variants $\StMod^\fin(\Z_p G)$ and subgroups of $G$.

\begin{lemma}
Let $G$ be a finite group and $n=|G|$ its order. Let $\Z_n$ denote the completion $\lim_k \Z/n^k$ (this is a product of $\Z_p$  where $p$ ranges over all primes dividing $n$.)
The functor
\[
\StMod^\fin(\Z G)\to \StMod^\fin(\Z_n G) \simeq \prod_{p\mid n}\StMod^\fin(\Z_p G)
\]
is fully faithful.
\end{lemma}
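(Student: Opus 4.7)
The plan is to reduce full faithfulness to a direct computation on mapping spectra via Lemma \ref{lem:mappingtate}, which identifies
\[
\map_{\StMod^\fin(RG)}(X,Y) \simeq \map_R(X,Y)^{tG}.
\]
So I need to show that for perfect $X, Y \in \Fun(BG, \Mod_\omega(\Z))$, the base-change map
\[
\map_\Z(X,Y)^{tG} \longrightarrow \map_{\Z_n}(\Z_n \otimes X, \Z_n \otimes Y)^{tG}
\]
is an equivalence. Dualizability of $X$ rewrites the target as $(\Z_n \otimes_\Z M)^{tG}$ with $M := \map_\Z(X,Y)$, so writing $C := \mathrm{cofib}(\Z \to \Z_n)$ I am reduced to proving $(C \otimes_\Z M)^{tG} \simeq 0$.

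Next, I would pin down $C$. Since $\Z \hookrightarrow \Z_n$ is injective as abelian groups, $C$ is concentrated in degree zero with $\pi_0 C = \Z_n/\Z$. A snake-lemma computation applied to the short exact sequence $0 \to \Z \to \Z_n \to \Z_n/\Z \to 0$ with vertical multiplication-by-$n$ maps shows that $n$ acts as an isomorphism on $\Z_n/\Z$: the induced map $\Z/n = \mathrm{coker}(n|_\Z) \to \mathrm{coker}(n|_{\Z_n}) = \Z_n/n\Z_n = \Z/n$ is an isomorphism, and both kernels vanish. Therefore $C$ carries a $\Z[1/n]$-module structure, so $n$ acts invertibly on $C \otimes_\Z M$ and thereby on $(C \otimes_\Z M)^{tG}$.

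To conclude, I would invoke the standard fact that $n = |G|$ annihilates every $G$-Tate spectrum: multiplication by $n$ on $\bS^{tG}$ is nullhomotopic (a direct consequence of the norm cofibre sequence and basic transfer/norm identities), and every Tate spectrum is a module over $\bS^{tG}$. Thus $n$ acts simultaneously as a unit and as zero on $(C \otimes_\Z M)^{tG}$, forcing it to vanish. The main subtlety is verifying that these two ``$n$-actions'' genuinely coincide as endomorphisms of spectra; since both arise from the $n$-fold sum of the identity on the monoidal unit, they match tautologically, and I do not foresee any obstacle beyond organizing this bookkeeping carefully.
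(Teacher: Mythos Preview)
Your argument is correct and essentially the same as the paper's: both reduce via Lemma~\ref{lem:mappingtate} to comparing $M^{tG}$ with $(\Z_n\otimes_\Z M)^{tG}$ for $M=DX\otimes_\Z Y$, and both finish using that $|G|$ acts as zero on any $G$-Tate construction. The only difference is packaging: the paper notes that both sides are $\Z^{tG}$-modules and hence $n$-complete, then checks the map mod $n$ (where $\Z$ and $\Z_n$ coincide), whereas you compute the cofiber $\Z_n/\Z$ directly and show it is uniquely $n$-divisible so that tensoring and taking Tate kills it.
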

\begin{proof}
We can identify the map
\[
\map_{\StMod^{\fin}(\Z G)}(X,Y) \to \map_{\StMod^{\fin}(\Z_n G)}(\Z_n \otimes_{\Z} X,\Z_n \otimes_{\Z} Y)
\]
with the map
\[
(DX\otimes_{\Z} Y)^{tG} \to (\Z_n \otimes_{\Z} DX\otimes_{\Z} Y)^{tG}.
\]
Both sides are $n$-complete since they are modules over $\Z^{tG}$, $\pi_0$ of which is $\Z/n$. So we can check equivalence after reducing mod $n$, where it follows from the fact that $\Z$ and $\Z_n$ agree after reduction mod $n$. 
\end{proof}

\begin{proposition}
\label{prop:idempotentcompletion}
The fully faithful functor
\[
\StMod^\fin(\Z G)\to \StMod^\fin(\Z_n G) \simeq \prod_{p\mid n}\StMod^\fin(\Z_p G)
\]
is an idempotent completion, i.e. its codomain is idempotent complete, and every object is a retract of an object in the essential image.
\end{proposition}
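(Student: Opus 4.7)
The plan is to verify the two defining conditions of an idempotent completion, since fully faithfulness is provided by the preceding lemma: (a) the codomain $\prod_{p\mid n}\StMod^{\fin}(\Z_p G)$ is idempotent complete; (b) every object of the codomain is a retract of an object in the essential image.

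For (a), it suffices to handle each factor. By Lemma \ref{lem:objectsrepresentative}, an object of $\StMod^{\fin}(\Z_p G)$ is represented by a f.g.\ projective $\Z_p$-module with $G$-action, and by Lemma \ref{lem:mappingtate} the mapping spectra are of the form $\map_{\Z_p}(X,Y)^{tG}$, which are modules over the $p$-complete ring $\pi_0(\Z_p^{tG})$. In the pre-quotient category of f.g.\ projective $\Z_p$-modules with $G$-action, endomorphism rings are module-finite $\Z_p$-algebras, hence $p$-adically complete and semiperfect, so idempotents split there. The subtle point is that an idempotent in $\StMod^{\fin}(\Z_p G)$ lives in $\widehat{H}^0(G,\End_{\Z_p}(M))$ and need not come from an honest $G$-equivariant endomorphism; but after replacing $M$ by $M\oplus\Z_p G^{\oplus k}$ for suitable $k$, the obstruction (a norm class) can be killed, and the resulting genuine idempotent yields the desired splitting via semiperfect lifting.

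For (b), given $(M_p)_{p\mid n}$, I would construct $L=\bigoplus_{p\mid n}L^{(p)}$ as a $\Z G$-lattice, where each $L^{(p)}$ is ``concentrated at $p$'': its completion at $p$ contains $M_p$ as a retract in $\StMod^{\fin}(\Z_p G)$ (modulo a free $\Z_p G$-summand), while for every other $q\mid n$, $q\neq p$, the completion $L^{(p)}\otimes\Z_q$ is a retract of a free $\Z_q G$-module and hence trivial in $\StMod^{\fin}(\Z_q G)$. To build $L^{(p)}$, I would approximate the action $G\to\Aut_{\Z_p}(M_p)$ by a representation $G\to\Aut_{\Z_{(p)}}(N)$: choose matrices over $\Z_{(p)}$ agreeing with the given $\Z_p$-valued matrices modulo a large power of $p$, then correct the resulting ``near-homomorphism'' into an honest group homomorphism using the $p$-adic contraction property of the congruence subgroups of $\mathrm{GL}_r(\Z_p)$. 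Finally, globalize from $\Z_{(p)}$ to $\Z$ by clearing a common denominator coprime to $p$.

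The main obstacle is the construction of each $L^{(p)}$ with these simultaneous local conditions. Realizing $M_p$ at $p$ via approximation-plus-correction is standard, but guaranteeing that $L^{(p)}\otimes\Z_q$ is free as a $\Z_q G$-module at every other prime $q\mid |G|$ is delicate: it cannot be arranged cheaply at primes where $G$ has $q$-torsion, and likely requires a careful choice of denominators together with the freedom to absorb free $\Z_q G$-summands provided by $\StMod^{\fin}$. I expect this is where the proof needs the most care, possibly by invoking an explicit arithmetic device such as central idempotents in $\Q G$ that separate the prime $p$ from the remaining $q\mid n$.
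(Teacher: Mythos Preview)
Your part (b) has a genuine gap, and you are missing a one-line argument that makes the arithmetic globalization problem disappear entirely. In $\StMod^\fin(\Z_n G)$ multiplication by $n=|G|$ is zero (it factors through the norm, since $\End_{\StMod^\fin(\Z_n G)}(\Z_n)$ is $\widehat H^0(G;\Z_n)=\Z/n$). Hence every object $Y$ is a retract of $Y/n$. If you represent $Y$ by a finitely generated free $\Z_n$-module in degree $0$, then $Y/n$ is a finitely generated free $\Z/n$-module with $G$-action, and such a thing is visibly in the essential image of base change along $\Z\to\Z_n$. No approximation of $p$-adic representations by $\Z_{(p)}$-lattices, no clearing of denominators, and no analysis of the other primes $q\mid n$ is needed; the ``retract of an object in the image'' condition is satisfied already by the mod $n$ reduction. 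Your proposed route of building $L^{(p)}$'s with prescribed local behaviour is not wrong in spirit, but the step you flag as the main obstacle (forcing $L^{(p)}\otimes\Z_q$ to be stably free for all $q\neq p$) is genuinely hard to arrange directly, and is simply unnecessary here.

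Your part (a) is closer, but the stabilization by free summands is superfluous. Since $\widehat H^0(G;\End_{\Z_p}(M))=H^0/\mathrm{im}(N)$, the idempotent in $\StMod^\fin(\Z_p G)$ always lifts to an honest $G$-equivariant endomorphism $\varepsilon\in\End_{\Z_p G}(M)$; the only issue is that $\varepsilon$ need not be idempotent upstairs. The paper's argument deals with this directly: the surjection $\End_{\Z_p G}(M)\to \End_{\StMod^\fin(\Z_p G)}(M)$ kills $p^v$ (with $p^v\,\|\,|G|$), and since $\End_{\Z_p G}(M)/p^v$ is a finite ring one replaces $\varepsilon$ by a high power to make it idempotent mod $p^v$. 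Then one lifts through the tower $\End/p^i$ by the standard trick $\varepsilon_{i+1}:=\varepsilon_i^{\,p}$, and the limit is a genuine idempotent of $\End_{\Z_p G}(M)$, which splits in $\Fun(BG,\Mod_\omega(\Z_p))$. Your invocation of ``semiperfect lifting'' amounts to essentially the same Hensel-type statement, but the claim that one must first kill a ``norm class'' obstruction by adding copies of $\Z_p G$ is not needed and is not what the paper does.
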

\begin{proof}
We first show every object in the codomain is a retract of one in the essential image. Multiplication by $n$ is zero in $\StMod^\fin(\Z_n G)$, and so every object $Y$ is a retract of $Y/n$. Representing $Y$ by a free $\Z_n$-module concentrated in degree $0$, we see that $Y/n$ is represented by a free $\Z/n$-module in degree $0$, which is clearly in the image of the basechange $\Z\to \Z_n$.

We now check that the codomain is indeed idempotent complete. It suffices to check that $\StMod^\fin(\Z_p G)$ is idempotent complete. Let $X\in\StMod^\fin(\Z_p G)$ be some object with idempotent. We can represent this by a diagram
\[
X \xto{\varepsilon} X
\]
in $\Fun(BG, \Mod_\omega(\Z_p))$, where $X$ is concentrated in $[0,0]$ and $\varepsilon$ is some not necessarily idempotent map which reduces to the given idempotent in $\StMod^\fin(\Z_p G)$.

Note that $\End_{\Z_p[G]}(X) \to \End_{\StMod^\fin(\Z_p G)}(X)$ annihilates all multiples of $p^v$, where $p^v$ is the largest power of $p$ dividing $|G|$. We can assume $\varepsilon$ to be an idempotent in $\End_{\Z_p[G]}(X)/p^v$. Indeed, the latter one is a finite ring, and so the values of $\varepsilon^k$ become eventually periodic, so for large enough $k$, $\varepsilon^{2k} = \varepsilon^k$ in $\End_{\Z_p[G]}(X)/p^v$, and so $\varepsilon^k$ becomes an idempotent mod $p^v$ that still reduces to our original idempotent in $\StMod^\fin(\Z_p G)$.

We claim we find compatible idempotents $\varepsilon_i\in \End_{\Z_p[G]}(X)/p^i$ for all $i\geq v$. Given $\varepsilon_i$, we have
\[
\varepsilon_i^2 = \varepsilon_i + a
\]
with $a\in p^i\End_{\Z_p[G]}(X)$ and commuting with $\varepsilon$. So 
\[
\varepsilon_i^{2p} = \varepsilon_i^p
\]
modulo $p^{i+1}$, and we can take $\varepsilon_{i+1} = \varepsilon_i^p$ as desired. In the limit, the $\varepsilon_i^p$ now define an idempotent element of $\End_{\Z_p[G]}(X)$ lifting the original idempotent. Since $\Fun(BG, \Mod_{\omega}(\Z_p))$ is idempotent complete, the result follows.
\end{proof}

\begin{corollary}
For $\cF, K$ as in Theorem \ref{thm:pullback},
the following diagram is also $1$-cartesian:
\[
\begin{tikzcd}
\cPic(\Sp^G_{\omega}/\cF) \rar\dar& \cPic(\Sp^G_{\omega}/\cF\cup \{G/K\})\dar\\
\cPic(\Fun(BW_G(K), \Mod_\omega(\Z))) \rar & \prod_{p\mid |G|}\cPic(\StMod^\fin(\Z_p G)))
\end{tikzcd}
\]
\end{corollary}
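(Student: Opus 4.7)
The plan is to deduce this corollary from Theorem~\ref{thm:pictotal1cartesian}, which already provides a $1$-cartesian diagram with $\cPic(\StMod^\fin(\Z W_G(K)))$ in the bottom-right corner, by replacing that corner with $\prod_{p\mid |W_G(K)|} \cPic(\StMod^\fin(\Z_p W_G(K)))$ and checking that the $1$-cartesian property survives. The crucial input is Proposition~\ref{prop:idempotentcompletion} applied to $W_G(K)$: it records in particular that the natural functor $\StMod^\fin(\Z W_G(K)) \to \prod_{p\mid |W_G(K)|} \StMod^\fin(\Z_p W_G(K))$ is fully faithful, as every idempotent completion is.

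First I would apply $\cPic$ to this fully faithful symmetric-monoidal functor. The functor $\cPic$ commutes with products, and sends fully faithful symmetric-monoidal functors to fully faithful maps of Picard groupoids: mapping spaces in $\cPic(\cD)$ are computed as the subspaces of isomorphisms inside the corresponding mapping spaces in $\cD$, and a morphism between dualizable objects of $\cD$ is an isomorphism iff its image under a fully faithful exact functor is (detectable, for instance, by vanishing of the cofiber). This produces a fully faithful symmetric-monoidal map $\cPic(\StMod^\fin(\Z W_G(K))) \to \prod_{p\mid |W_G(K)|} \cPic(\StMod^\fin(\Z_p W_G(K)))$.

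Then I would invoke the general observation that a fully faithful functor $F: \cD \hookrightarrow \cD'$ of $\infty$-categories preserves pullbacks along cospans landing in $\cD$: for any cospan $C \to \cD \leftarrow B$, the natural map $C \times_\cD B \to C \times_{\cD'} B$ is an equivalence, because a point in either pullback is an isomorphism between specific objects of $\cD$, and such isomorphisms agree on the two sides by definition of full faithfulness. Applied to the fully faithful map above and the bottom-right cospan in Theorem~\ref{thm:pictotal1cartesian}, this identifies the relevant pullback with the one appearing in that theorem, so the $1$-cartesian property transfers verbatim. The only real subtlety, which I would regard as the main (though mild) point to verify, is the passage from full faithfulness of the underlying functor to full faithfulness on Picard groupoids; once that step is in hand, the rest of the argument is essentially formal.
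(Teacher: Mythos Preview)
Your proposal is correct and follows essentially the same approach as the paper: the paper's own proof is the single sentence ``This only uses the fully faithfulness established in Proposition~\ref{prop:idempotentcompletion},'' and your argument is exactly an unpacking of that sentence. Your explicit verification that a fully faithful functor out of the bottom-right corner leaves the (homotopy) pullback unchanged, and hence preserves the $1$-cartesian property, is precisely the content the paper leaves implicit.
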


This only uses the fully faithfulness established in Proposition \ref{prop:idempotentcompletion}. In this form, we say that invertible objects of $\Sp^G/\cF$ consist of an invertible object of $\Sp^G_\omega/\cF\cup\{G/K\}$, a candidate for the homology of the geometric $K$-fixed points, and gluing data chosen locally at each $p$.

We now want to understand $\cPic(\StMod^\fin(\Z_p G))$ better. In the classical case with coefficients in a field $k$ of characteristic $p$, Grodal \cite{grodal} establishes a powerful descent mechanism that completely characterizes $\cPic(\StMod^\fin(k G))$ in terms of the $p$-subgroups of $G$. The approach there is quite explicit, but unfortunately generalizing it directly to $\Z_p$-coefficients seems to involve additional subtleties. 

We learned from Mathew that this descent result can be obtained from the general machinery developed in \cite{mathewnaumannnoel}. 
Specifically, the stable module category over a ring $R$ can be written as category of modules in $\Sp^G$ over the genuine Tate spectrum whose $H$-fixed points are $R^{tH}$. For $R=\Z_p$ (or $k$), this spectrum is $\cF$-nilpotent for the family of $p$-subgroups, and so
\begin{equation}
\label{eq:limstmod}
\Ind(\StMod^\fin(R G)) = \lim_{G/H \in \Orb_p(G)} \Ind(\StMod^\fin(R H))
\end{equation}
by \cite[Theorem 6.42.]{mathewnaumannnoel}.
Now for $H=e$, $\StMod^\fin(R H)=0$, and so one sees that one can drop the free orbit $G/e\in \Orb_p(G)$ from the above limit.

One then has two important formal consequences:
\begin{proposition}
We have
\[
\cPic(\StMod^\fin(\Z_p G)) = \lim_{G/H\in\Orb_p^*(G)} \cPic(\StMod^\fin(\Z_p H)).
\]
where $\Orb_p^*(G)$ denotes the full subcategory of orbits $G/H$ where $H$ is a nontrivial $p$-group.
\end{proposition}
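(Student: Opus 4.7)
The plan is to derive the formula from equation \eqref{eq:limstmod} in three essentially formal steps: apply $\cPic$, replace each $\cPic(\Ind(\cdot))$ by $\cPic$ of the small category, and discard the term at the free orbit $G/e$. Since $\cPic$ preserves limits of presentably symmetric-monoidal $\infty$-categories by \cite[Prop.~2.2.3]{mathewstojanoska}, applying it to \eqref{eq:limstmod} gives
\[
\cPic(\Ind(\StMod^\fin(\Z_p G))) \simeq \lim_{G/H \in \Orb_p(G)} \cPic(\Ind(\StMod^\fin(\Z_p H))).
\]

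For the second step, I would observe that in any compactly generated symmetric-monoidal $\infty$-category with compact unit, invertible objects are dualizable and hence compact; since the compact objects of $\Ind(\cC)$ are precisely the idempotent completion $\cC^\natural$, one has $\cPic(\Ind(\cC)) \simeq \cPic(\cC^\natural)$. By Proposition \ref{prop:idempotentcompletion}, $\StMod^\fin(\Z_p H)$ is already idempotent complete whenever $p \mid |H|$, which covers every nontrivial $p$-subgroup $H$ of $G$ as well as $G$ itself in the case $p \mid |G|$. If $p \nmid |G|$, then $|G|$ is invertible in $\Z_p$, so the Tate construction governing morphisms via Lemma \ref{lem:mappingtate} vanishes and $\StMod^\fin(\Z_p G) = 0$; simultaneously $\Orb_p^*(G)$ is empty, so the claim reduces to the trivial equality $* \simeq *$.

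For the third step, we discard $G/e$. Its value is $\cPic(\Ind(\StMod^\fin(\Z_p e))) \simeq \cPic(0) \simeq *$. Because $G$-equivariant maps $G/H \to G/e$ force $H = e$, the only morphisms in $\Orb_p(G)$ between $G/e$ and a nontrivial orbit $G/H$ run from $G/e$ to $G/H$; the corresponding maps in the diagram send a nontrivial space into the terminal space $*$ and thus impose no additional coherence on a cone. It follows that any limiting cone over the restricted diagram on $\Orb_p^*(G)$ extends uniquely to a limiting cone on $\Orb_p(G)$, so the restriction map on limits is an equivalence. Assembling the three steps yields the desired formula.

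I expect the most delicate step to be the last one: $G/e$ is not an isolated object in the orbit category, and in particular admits many morphisms out of it to other orbits, so one cannot drop it by a naive initial-or-terminal argument on the indexing shape. The key point that makes the argument go through is that the value at $G/e$ is itself a terminal object, which absorbs all the potentially relevant coherences and makes the $G/e$-component redundant.
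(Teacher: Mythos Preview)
Your proof is correct and follows essentially the same route as the paper: apply the limit-preservation of $\cPic$ to \eqref{eq:limstmod}, use Proposition~\ref{prop:idempotentcompletion} to pass from $\cPic(\Ind(\StMod^\fin(\Z_p H)))$ to $\cPic(\StMod^\fin(\Z_p H))$, and drop the free orbit because its value is terminal. The only cosmetic difference is ordering: the paper removes $G/e$ from \eqref{eq:limstmod} at the level of categories (just before the proposition) and then applies $\cPic$, whereas you apply $\cPic$ first and remove $G/e$ afterward; your explicit handling of the edge case $p\nmid |G|$ and of the cone-extension argument is a welcome elaboration of what the paper leaves implicit.
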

\begin{proof}
$\cPic$ preserves limits. From \eqref{eq:limstmod}, we thus see that $\cPic(\Ind(\StMod^\fin(\Z_p G)))$ admits a corresponding limit description. Finally, $\cPic(\Ind(\StMod^\fin(\Z_p G))) = \cPic(\StMod^\fin(\Z_p G))$, since for $R=\Z_p$ the category $\StMod^\fin(\Z_p G)$ is idempotent complete by Proposition \ref{prop:idempotentcompletion}, and so agrees with compact objects of its $\Ind$-category.
\end{proof}

\begin{proposition}
\label{prop:stmodstable}
Morphisms in $\StMod(\Z_p G)$ admit a \emph{stable element description} (as in group cohomology, cf. \cite[Theorem XII.10.1]{cartaneilenberg}). More precisely, $[X,Y]_{\StMod(\Z_p G)}$ agrees with the subgroup of all elements of $[X,Y]_{\StMod(\Z_p P)}$ ($P$ a $p$-Sylow subgroup) which are stable in the following sense: Given $P'\subseteq P$ and $g\in G$ with $(P')^g = gP'g^{-1}\subseteq P$, the two images of the morphism $X\to Y$ in $\StMod(\Z_p P)$ under the diagram
\[
\begin{tikzcd}
\StMod(\Z_p P) \rar\ar{rd} & \StMod(\Z_p P')\dar{\sim}\\
                         &  \StMod(\Z_p (P')^g)
\end{tikzcd}
\]
agree.
\end{proposition}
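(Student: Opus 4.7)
The plan is to follow the classical Cartan--Eilenberg argument for the stable element theorem, enabled by the limit description of $\StMod^\fin(\Z_p G)$ from the preceding proposition together with the Tate expression of mapping groups in Lemma~\ref{lem:mappingtate}. The arithmetic input is that $[G:P]$ is coprime to $p$, hence a unit in $\Z_p$, and so acts invertibly on every $\Z_p$-module under consideration.

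First, I would establish that restriction $\mathrm{res}: [X,Y]_{\StMod(\Z_p G)} \to [X|_P, Y|_P]_{\StMod(\Z_p P)}$ is injective. The left adjoint to restriction descends through the Tate construction to yield a transfer $t: [X|_P, Y|_P]_{\StMod(\Z_p P)} \to [X,Y]_{\StMod(\Z_p G)}$ with $t \circ \mathrm{res} = [G:P] \cdot \mathrm{id}$; since $[G:P]$ is a unit in $\Z_p$, $\mathrm{res}$ is injective. The image lies in the stable subgroup by pure naturality: for any $\tilde f \in [X,Y]_{\StMod(\Z_p G)}$ and any stability datum $(P', g)$, both morphisms in the statement recover the single element $\mathrm{res}^G_{P'}\tilde f$, since conjugation by $g$ is realized as a morphism in $\Orb_p^*(G)$.

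The substantive part is showing every stable $f \in [X|_P, Y|_P]_{\StMod(\Z_p P)}$ lies in the image of $\mathrm{res}$. I compute $\mathrm{res} \circ t(f)$ via the Mackey double coset formula
\[
\mathrm{res}^G_P\, t^G_P(f) \;=\; \sum_{g\in P\backslash G/P} t^P_{P\cap gPg^{-1}}\, \mathrm{res}^{gPg^{-1}}_{P\cap gPg^{-1}}\, c_g(f).
\]
Stability of $f$ rewrites $\mathrm{res}^{gPg^{-1}}_{P\cap gPg^{-1}} c_g(f) = \mathrm{res}^P_{P\cap gPg^{-1}}(f)$, so each summand collapses to $t^P_{P\cap gPg^{-1}}\mathrm{res}^P_{P\cap gPg^{-1}}(f) = [P : P\cap gPg^{-1}] \cdot f$. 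Summing over double cosets, the orbit formula $\sum_g [P : P \cap gPg^{-1}] = [G:P]$ yields $\mathrm{res}\circ t(f) = [G:P]\cdot f$, so $[G:P]^{-1} t(f)$ is the desired preimage.

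The main obstacle will be making the transfer and Mackey formula rigorous in $\StMod^\fin(\Z_p G)$. The cleanest route is to descend both from their counterparts on spectra with $G$-action via Lemma~\ref{lem:mappingtate}, which presents the mapping groups as Tate homotopy groups where both structures are classical. Some careful bookkeeping of conjugation conventions in each double coset term will be required so that the stability relation applies summand-by-summand, but no further conceptual input is needed.
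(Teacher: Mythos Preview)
Your argument is correct, and it takes a genuinely different route from the paper. The paper does not give a standalone proof of this proposition at all: it is listed as one of two ``important formal consequences'' of the categorical limit description
\[
\Ind(\StMod^\fin(\Z_p G)) \;\simeq\; \lim_{G/H \in \Orb_p^*(G)} \Ind(\StMod^\fin(\Z_p H)),
\]
which in turn is obtained by identifying $\Ind(\StMod^\fin(\Z_p G))$ with modules over the genuine Tate spectrum $\Z_p^{t_{\gen}G}$ and invoking the $\cF$-nilpotence descent machinery of Mathew--Naumann--Noel. From that limit, the stable element description on $\pi_0$ of mapping spectra is read off by identifying the limit over $\Orb_p^*(G)$ with the equalizer at a Sylow $G/P$.

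Your approach instead bypasses the descent machinery entirely and runs the classical Cartan--Eilenberg transfer/Mackey argument directly on the Tate mapping groups furnished by Lemma~\ref{lem:mappingtate}. This is more elementary and fully self-contained within the paper: it needs neither Proposition~\ref{prop:stmodmodules} nor the external reference to \cite{mathewnaumannnoel}. (Your opening sentence mentions the limit description as enabling input, but your actual argument never uses it---only Lemma~\ref{lem:mappingtate} is invoked.) The paper's route, by contrast, gets the result essentially for free once the heavier categorical decomposition is in place, and that decomposition is independently needed for the Picard limit formula in the preceding proposition. One small wording quibble: the transfer on Tate is not literally ``the left adjoint to restriction descended through the Tate construction''; rather it is the standard transfer on homotopy fixed points, compatible with the norm and hence inducing a map on cofibers. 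But this does not affect the substance of your argument.
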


The fact that the stable module category can be identified with a module category in $\Sp^G$ over a genuine Tate spectrum seems to be well-known, but we reproduce it here since we were unable to find a reference.

\begin{lemma}
\label{lem:generatorsstmod}
Let $R=\Z$ or $\Z_p$.

$\StMod^\fin(R G)$ is generated by the objects $R[G/H]$ as a thick subcategory, where $H$ ranges over subgroups. If $R$ is $\Z_p$, it suffices to let $H$ range over $p$-subgroups.
\end{lemma}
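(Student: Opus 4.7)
The plan is to reduce to showing that every $R[G]$-lattice lies in the thick subcategory of $\Fun(BG, \Mod_\omega(R))$ generated by the permutation lattices $R[G/H]$, and then to invoke Artin's induction theorem combined with finiteness of the projective class group.

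By Lemma \ref{lem:objectsrepresentative}, every object of $\StMod^\fin(RG)$ is represented by an object of $\Fun(BG, \Mod_\omega(R))$ whose underlying $R$-module is concentrated in $[0,0]$. Since $R = \Z$ or $\Z_p$ is a PID, such representatives are precisely $R[G]$-lattices, i.e., finitely generated $R$-free $R[G]$-modules. Thus it suffices to show every $R[G]$-lattice $M$ lies in the thick subcategory $\mathcal{T}$ of $\Fun(BG, \Mod_\omega(R))$ generated by $\{R[G/H] : H \leq G\}$. For the $p$-subgroup reduction when $R = \Z_p$: given $H \leq G$ with a $p$-Sylow $P \leq H$, the composite $R[G/H] \xrightarrow{\mathrm{tr}} R[G/P] \to R[G/H]$ is multiplication by $[H:P]$, which is a unit in $\Z_p$, so $R[G/H]$ is a retract of $R[G/P]$ and generation by all $R[G/H]$ collapses to generation by $R[G/P]$ with $P$ a $p$-subgroup.

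For the main claim, I would apply Artin's induction theorem to $V := M \otimes_R K$ with $K := \mathrm{Frac}(R)$, obtaining an identity $|G| \cdot [V] = \sum_{H \in \mathcal{C}} a_H \cdot [K[G/H]]$ in $K_0(K[G])$ for a set $\mathcal{C}$ of cyclic subgroups and $a_H \in \Z$. The kernel of the map $K_0(R[G]) \to K_0(K[G])$ is the finite projective class group (Jordan--Zassenhaus for $R = \Z$; elementary for $R = \Z_p$), so multiplying by its order $N$ produces the integral identity $N|G| \cdot [M] = \sum_H N a_H \cdot [R[G/H]]$ in $K_0(R[G])$. Separating positive and negative coefficients and absorbing stable-freeness into copies of $R[G] = R[G/e]$, this becomes a genuine isomorphism $M^{\oplus N|G|} \oplus P \cong Q$ in $\Fun(BG, \Mod_\omega(R))$ with $P, Q$ finite direct sums of various $R[G/H]$. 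Hence $M^{\oplus N|G|}$ is a summand of $Q$ and lies in $\mathcal{T}$; since $\mathcal{T}$ is closed under retracts, so does $M$.

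The main obstacle is the lift of the Artin identity from $K_0(K[G])$ to $K_0(R[G])$, which requires finiteness of the projective class group---classical but nontrivial over $\Z$. Over $\Z_p$ the lift is considerably easier, and often trivial: when $G$ is a $p$-group, $\Z_p[G]$ is local and every projective is free, eliminating the need for a stabilization step.
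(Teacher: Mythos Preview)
Your argument has a genuine gap at the step ``this becomes a genuine isomorphism $M^{\oplus N|G|}\oplus P\cong Q$.'' The projective class group is by definition the kernel of $K_0(R[G])\to K_0(K[G])$ on \emph{projective} modules, but neither $M$ nor the permutation modules $R[G/H]$ are $R[G]$-projective in general, so your identity does not live there. If you instead mean the split Grothendieck group of $R[G]$-lattices (where an equality of classes would yield a stable isomorphism), the kernel of its map to $K_0(K[G])$ is not finite: already for $G=C_p$ and $R=\Z_p$, Krull--Schmidt gives three indecomposable lattices $\Z_p$, $\Z_p[C_p]$, and the augmentation ideal $I$, so this split $K_0$ is free of rank $3$ while $K_0(\mathbb{Q}_p[C_p])$ has rank $2$. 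If you mean the exact Grothendieck group of lattices, an identity there records extensions rather than direct-sum decompositions and does not produce the isomorphism you claim. Indeed that isomorphism is simply false: by Krull--Schmidt, $I^{\oplus k}$ is never a direct summand of any sum of copies of $\Z_p$ and $\Z_p[C_p]$, since $I$ is indecomposable and not isomorphic to either. Of course $I$ \emph{does} lie in the thick subcategory, via the cofibre sequence $I\to\Z_p[C_p]\to\Z_p$; the point is that your argument does not produce such sequences.

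The paper's proof avoids this by working in $\StMod^{\fin}$ rather than attempting the stronger statement in $\Fun(BG,\Mod_\omega(R))$. It first reduces $R=\Z$ to $R=\Z_p$ using that $\StMod^\fin(\Z G)\to\prod_p\StMod^\fin(\Z_p G)$ is an idempotent completion. For $G$ a $p$-group over $\Z_p$ it then uses the key fact, special to $\StMod$, that every $Y$ is a retract of $Y/p^k$ (since a power of $p$ acts by zero there); this reduces to $\F_p[G]$-modules, which for $p$-groups are iterated extensions of the trivial module. For general $G$ one uses that every $\Z_p[G]$-module is a retract of its induction from a $p$-Sylow, which is essentially the same transfer argument you correctly give for the $p$-subgroup reduction.
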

\begin{proof}
We need to show that every object can be obtained through retracts, extensions, cofibers and shifts from objects of the form $R[G/H]$. Recall first that the functor
\[
\StMod^\fin(\Z G) \to \prod_p \StMod^\fin(\Z_p G)
\]
is an idempotent completion by Proposition \ref{prop:idempotentcompletion}, in particular it is fully faithful and every object on the left is a retract of an object on the right. It is thus sufficient to check that every object on the right is a retract of a finite filtered object whose associated graded consists of sums of shifts of $R[G/H]$ (for various $H$), and thus it is sufficient to check this for $R=\Z_p$. 
In that situation, first assume $G$ is a $p$-group. In that case, every object $Y$ is a retract of $Y/p^k$ for suitable $k$, and $Y/p^k$ admits a finite length filtration with associated gradeds of the form $Y/p$. By choosing a representative of $Y$ concentrated in $[0,0]$ we may assume $Y/p$ to be represented by a finite $\F_p$-vector space with $G$-action. This in turn admits a finite length filtration of objects of the form form $\F_p[G/G]$, since every $\F_p[G]$-module admits a filtration by trivial representations. Since $\F_p[G/G]$ admits (as a complex) a length $2$ filtration with associated graded $\Z_p[G/G]$ and $\Sigma^1\Z_p[G/G]$, there is a combined filtration on $Y$ whose associated graded consists of shifts of $\Z_p[G/G]$.

For general $G$ and $\Z_p$-coefficients, we may use the corresponding fact for a $p$-Sylow subgroup $P$ together with the observation that every $\Z_p[G]$-module $Y$ is a retract of $\Z_p[G]\otimes_{\Z_p[P]} Y$.
\end{proof}

\begin{proposition}
\label{prop:stmodmodules}
Let $R$ be $\Z$ or $\Z_p$.

There is an equivalence
\[
\Ind(\StMod^\fin(R G)) \simeq \Mod_{\Sp^G}(R^{t_{\gen} G})
\]
where $R^{t_{\gen}G}\in \Sp^G$ has $H$-fixed points $R^{tH}$ for each $H\subseteq G$.
\end{proposition}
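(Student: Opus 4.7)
Both sides are compactly generated presentable stable $\infty$-categories. My plan is to define matching spectrally enriched orbit-category functors into each category, producing compatible subcategories of compact generators, and then conclude the equivalence via a Schwede-Shipley recognition principle.

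By Lemma \ref{lem:generatorsstmod}, the collection $\{R[G/H]\}_{H \subseteq G}$ generates $\StMod^\fin(RG)$ as a thick subcategory, so the functor $\Orb(G) \to \Ind(\StMod^\fin(RG))$, $G/H \mapsto R[G/H]$, hits a set of compact generators. On the module side, the composite $\Orb(G) \to \Sp^G \xrightarrow{(-) \otimes R^{t_{\gen}G}} \Mod_{\Sp^G}(R^{t_{\gen}G})$, $G/H \mapsto R^{t_{\gen}G} \otimes \Sigma^\infty_+ G/H$, hits compact generators, since these are images of the compact generators $\Sigma^\infty_+ G/H$ of $\Sp^G$ under a symmetric monoidal left adjoint.

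Next I match the mapping spectra between corresponding generators, naturally in orbit-category morphisms. On the module side, the projection formula $\Sigma^\infty_+ G/K \otimes X \simeq \Ind_K^G(X|_K)$ together with the adjunction gives
\[
\map\bigl(R^{t_{\gen}G} \otimes \Sigma^\infty_+ G/H,\; R^{t_{\gen}G} \otimes \Sigma^\infty_+ G/K\bigr) \simeq \bigl(\Ind_K^G (R^{t_{\gen}G}|_K)\bigr)^H.
\]
Using $R^{t_{\gen}G}|_L \simeq R^{t_{\gen}L}$, the defining identity $(R^{t_{\gen}L})^{L'} = R^{tL'}$ for $L'\subseteq L$, and the Mackey double-coset decomposition, this becomes $\bigoplus_{g \in H \backslash G / K} R^{t(H \cap gKg^{-1})}$ (where terms with $H\cap gKg^{-1}=e$ vanish, since $R^{te}\simeq 0$). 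On the stable module side, Lemma \ref{lem:mappingtate} together with the self-duality $D(R[G/H]) \simeq R[G/H]$ identifies the mapping spectrum with $(R[G/H] \otimes_R R[G/K])^{tG}$, and the Mackey decomposition $R[G/H] \otimes R[G/K] \simeq \bigoplus_g \Ind_{H \cap gKg^{-1}}^G R$ together with the Tate analog of Shapiro's lemma $(\Ind_L^G R)^{tG} \simeq R^{tL}$ yield the same answer. These identifications are functorial in $H$ and $K$ by construction, producing an equivalence of spectrally enriched $\infty$-categories of generators.

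By the multi-object Schwede-Shipley principle, each side is equivalent to the presheaf category on its spectrally enriched $\infty$-category of compact generators, so the two presentable stable $\infty$-categories agree. The main obstacle is upgrading this to a symmetric monoidal equivalence: a direct restriction-to-Borel functor $\Sp^G\to \Fun(BG,\Mod R)\to\Ind(\StMod^\fin(RG))$ would send $R^{t_{\gen}G}$ to $0$ rather than to the unit (its underlying spectrum $R^{te}$ vanishes), and therefore cannot be symmetric monoidal. I would resolve this by verifying monoidality at the level of generators, using the Mackey decompositions of $R[G/H]\otimes R[G/K]$ and of $R^{t_{\gen}G}\otimes \Sigma^\infty_+(G/H)\otimes\Sigma^\infty_+(G/K)$ into sums of further generators, where the comparison of the two tensor structures is transparent.
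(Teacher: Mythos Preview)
Your approach differs from the paper's, and the difference exposes a real gap in your argument.

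The paper does not match generators abstractly. It takes the composite $\Sp^G_\omega \to \Fun(BG,\Sp_\omega) \to \Fun(BG,\Mod_\omega(R)) \to \StMod^\fin(RG)$ --- precisely the restriction-to-Borel functor you set aside --- and $\Ind$-extends to a colimit-preserving symmetric monoidal $L\colon \Sp^G \to \Ind(\StMod^\fin(RG))$. Its right adjoint is lax monoidal, and $R^{t_\gen G}$ is \emph{defined} as the value of that right adjoint on the unit; the identification $(R^{t_\gen G})^H \simeq R^{tH}$ is then a consequence, not an input. The right adjoint factors through the forgetful functor $\Mod_{\Sp^G}(R^{t_\gen G})\to\Sp^G$, so by uniqueness of adjoints $L$ factors through the free-module functor, producing the comparison $\Mod_{\Sp^G}(R^{t_\gen G}) \to \Ind(\StMod^\fin(RG))$. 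Fully-faithfulness on generators is a one-line adjunction reducing to the defining property of $R^{t_\gen G}$, and essential surjectivity onto compact generators is Lemma~\ref{lem:generatorsstmod}.

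Your objection that the restriction-to-Borel functor sends $R^{t_\gen G}$ to zero is a misreading of the setup. The comparison functor is not $L$ applied to the underlying $G$-spectrum of a module; it is the factorization of $L$ through the free-module functor, and under that factorization the unit $R^{t_\gen G}$ goes to $L(\bS_G)=R$, exactly as it should. So there is no obstruction to monoidality here: $L$ is symmetric monoidal from the outset, and the comparison functor inherits this through the universal property of module categories.

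The genuine gap in your route is coherence. You verify that the mapping spectra between corresponding generators agree as individual spectra, but Schwede--Shipley requires an equivalence of spectrally enriched $\infty$-categories, with compositions and all higher coherences matched. Saying the identifications are ``functorial by construction'' from parallel double-coset decompositions is not a proof; assembling those coherences by hand is exactly the work one avoids by writing down an actual functor. In the paper's approach this problem disappears: once $L$ exists, fully-faithfulness is a \emph{property} to be checked on a specific comparison map, not \emph{structure} to be built from scratch. You also silently assume that $R^{t_\gen G}$ is already given as a commutative algebra in $\Sp^G$, which is part of what has to be established.
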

\begin{proof}
We consider the symmetric monoidal functor
\[
\Sp^G_{\omega} \to \StMod^\fin(R G)
\]
obtained as the composite $\Sp^G_\omega \to \Fun(BG, \Sp_\omega) \to \Fun(BG, \Mod_\omega(R))\to \StMod^\fin(R G)$. It is exact and therefore induces a colimit-preserving symmetric monoidal functor
\[
\Sp^G \to \Ind(\StMod^\fin(R G)).
\]
By the adjoint functor theorem, it admits a right adjoint $\Ind(\StMod^\fin(R G)) \to \Sp^G$, which is then lax symmetric monoidal. The image of the unit is therefore a commutative algebra $R^{t_\gen G}\in \Sp^G$. Observe also that the left adjoint sends $\Sigma^\infty_+ G/H \mapsto R[G/H]$, so $R^{t_\gen G}$ has $H$-fixed points given by 
\[
\map_{\StMod^\fin(R G)}(R[G/H], R) \simeq \map_{\StMod^\fin(R H)}(R, R) \simeq R^{tH}.
\]
We obtain a factorisation 
\[
\Ind(\StMod^\fin(R G))\to \Mod_{\Sp^G}(R^{t_\gen G}) \to \Sp^G.
\]
Since the composite preserves limits and is accessible (in fact, colimit preserving, since the left adjoint $\Sp^G \to \Ind(\StMod^\fin(R G))$ preserves compacts), and the forgetful functor $\Mod_{\Sp^G}(R^{t_\gen G})\to \Sp^G$ preserves limits and colimits, the functor $\Ind(\StMod^\fin(R G)) \to \Mod_{\Sp^G}(R^{t_\gen} G)$ also preserves limits and is accessible, so it admits a left adjoint. By uniqueness of adjoints and the fact that adjoints compose, we have thus factored the original functor into
\[
\Sp^G \to \Mod_{\Sp^G}(R^{t_\gen G})\to \Ind(\StMod^\fin(R G)).
\]
It is now enough the check that the second functor is fully faithful, since it hits compact generators by Lemma \ref{lem:generatorsstmod}. We can check fully faithfullness on generators, and by duality, it suffices to analyze the case
\[
\map_{\Mod_{\Sp^G}(R^{t_\gen G})}(R^{t_\gen G}[G/H], R^{t_\gen G}) \to \map_{\StMod(R G)}(R[G/H], R).
\]
But this is an equivalence more or less by definition: The left hand side is given by $H$-fixed points of $R^{t_\gen G}$, which is the right hand side by construction of $R^{t_\gen G}$.
\end{proof}

\section{Dimension functions}
\label{sec:dimensionfunctions}
For an invertible object $X\in \cPic(\Sp^G/\cF)$, we have its \emph{dimension function} 
\[
\dim(X)\in C(G) = \prod_{G/H\in \Orb(G)} \Z
\]
and its \emph{orientation behaviour}
\[
e(X)\in \prod_{G/H\in \Orb(G)} \Hom(W_G(H), \{\pm 1\}).
\]
The former assigns to every $G/H$ the dimension of $\Phi^H X$, i.e. the integer $\dim_H(X)$ with $\Phi^H X \simeq S^{\dim_H(X)}$, the latter assigns to $X$ the action of $W_G(H)$ on $\Z \simeq H_{\dim_H(X)}(\Phi^H X)$. We think of this pair as \emph{generalized dimension function} of $X$, taking values in
\[
\cD(G) := \prod_{G/H \in \Orb(G)} \Z \times \Hom(W_G(H), \{\pm 1\}).
\]
For a generalized dimension function $d$, we write $d_H$ for the element of $\Z\times \Hom(W_G(H), \{\pm 1\})$, and $\Z[d_H]$ for the object of $\Fun(BW_G(H), \Mod_\omega(\Z))$ given by $\Z$ in degree given by the first factor of $d_H$, and $W_G(H)$-action given by the second factor of $d_H$. Note that every invertible object of $\Fun(BW_G(H), \Mod_\omega(\Z))$ is of this form by Lemma \ref{lem:picardmodr}. We will sometimes also abusively write $d_H$ for just the ``dimension'' part of $d_H$.

The kernel of the homomorphism
\[
\Pic(\Sp^G) \to C(G)
\]
is seen by tom Dieck-Petrie in \cite{tomdieckpetrie} to consist of invertible objects with trivial orientation behaviour, an observation we will be able to reproduce in Corollary \ref{cor:orientationbehaviour}.
Furthermore, they show that this kernel agrees with the Picard group of the Burnside ring $A(G)$ (see also \cite{fausklewismay}). More explicitly, for an object $X$ with trivial generalized dimension function, they check that the image of
\[
\pi_0^G(X)\xto{\Phi^H} [\Phi^H S^0, \Phi^H X]\simeq \Z \to \Z/|W_G(K)|
\]
consists of a single invertible element, well-defined up to sign (since the identification with $\Z$ depends on an orientation of $\Phi^HX$). $X$ is trivial precisely if this unit is $\pm 1$ for each $H$, and each unit can be realized. In particular, the kernel of $\Pic(\Sp^G)\to \cD(G)$ is given by the product of $(\Z/|W_G(H)|)^\times / \{\pm 1\}$ over all conjugacy classes of subgroups $H\subseteq G$.

In light of the $1$-cartesian diagrams provided by Proposition \ref{prop:pullbackdegreen} and Theorem \ref{thm:pictotal1cartesian}, this characterization arises very naturally.

\begin{definition}
We let
\[
\cD_\cF := \prod_{G/H \not\in \cF} \Z \times \Hom(W_G(H), \{\pm 1\})
\]
be the group of \emph{partial generalized dimension functions} (away from $\cF$), and $\Pic^0(\Sp^G_\omega/\cF)$ the kernel of the homomorphism
\[
\Pic(\Sp^G_\omega/\cF) \to \cD_\cF.
\]
\end{definition}

\begin{lemma}
\label{lem:liftingpic0}
Let $X\in \Sp^G_\omega/\cF\cup\{G/K\}$ be such that $X$ represents an object of $\Pic^0(\Sp^G_\omega/\cF\cup\{G/K\})$ (i.e. the partial generalized dimension function of $X$ is trivial). Then: 
\begin{enumerate}
\item There exists a map $S^0 \to X$ in $\Sp^G_\omega/\cF\cup\{G/K\}$ which induces an equivalence mod $|G|$.
\item The image $H\overline{\Phi}^K X$ in $\StMod(\Z W_G(K))$ is equivalent to $\Z[0]$. Here $H\overline{\Phi}^K$ denotes the right vertical map from the diagram of Theorem \ref{thm:pictotal1cartesian}.
\item There exist precisely $|(\Z/|W_G(K)|)/\{\pm 1\}|$ preimages of $X$ in $\Pic^0(\Sp^G_\omega/\cF)$.
\end{enumerate}
\end{lemma}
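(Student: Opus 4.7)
The three parts are logically connected: (1) is foundational, producing a mod-$|G|$ trivialization of $X$ from the unit; (2) is an immediate corollary; and (3) is a counting argument built on the $1$-cartesian pullback of Theorem \ref{thm:pictotal1cartesian}. The main obstacle is (1).

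For part (1), I would proceed by induction on the number of orbits outside $\cF\cup\{G/K\}$. The base case, where $\cF\cup\{G/K\}=\Orb(G)$, is vacuous since $\Sp^G_\omega/\cF\cup\{G/K\}=0$. For the inductive step, pick a minimal $G/K'\not\in\cF\cup\{G/K\}$ and apply the pullback of Theorem \ref{thm:pullback} relating $\Sp^G_\omega/\cF\cup\{G/K\}$ to $\Sp^G_\omega/\cF\cup\{G/K, G/K'\}$. The image of $X$ in $\Sp^G_\omega/\cF\cup\{G/K,G/K'\}$ still lies in $\Pic^0$, so the inductive hypothesis provides a mod-$|G|$ equivalence $\alpha: \bS_G\to X$ there. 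To lift $\alpha$ to $\Sp^G_\omega/\cF\cup\{G/K\}$, the pullback asks for a map $\beta: \Phi^{K'}\bS_G\to \Phi^{K'}X$ in $\Fun(BW_G(K'),\Sp_\omega)$ compatible with $\alpha$ in $\StMod^{\fin}(\bS W_G(K'))$. Since $X\in\Pic^0$, $\Phi^{K'}X\simeq S^0$ as a $W_G(K')$-spectrum, so a candidate $\beta$ (e.g.\ the identity) exists. The key technical input is that $|W_G(K')|\cdot\id_{\bS}$ factors as the norm $\bS\to\bS[W_G(K')]$ followed by summation, and hence vanishes in $\StMod^{\fin}(\bS W_G(K'))$; since $|W_G(K')|$ divides $|G|$, the integer $|G|$ also annihilates this category. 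The compatibility of $\alpha$ and $\beta$ in $\StMod^{\fin}(\bS W_G(K'))$ therefore holds automatically modulo $|G|$, after possibly adjusting $\beta$ within its $|G|$-torsion orbit.

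For part (2), apply the functor $H\overline{\Phi}^K: \Sp^G_\omega/\cF\cup\{G/K\}\to \StMod^{\fin}(\Z W_G(K))$ to the mod-$|G|$ equivalence from (1). By the analogous norm factorization for $\Z$, multiplication by $|W_G(K)|$, and hence by $|G|$, is zero on $\StMod^{\fin}(\Z W_G(K))$. Therefore $H\overline{\Phi}^K$ carries any mod-$|G|$ equivalence to a genuine equivalence, giving $H\overline{\Phi}^K X\simeq H\overline{\Phi}^K\bS_G\simeq \Z[0]$ with trivial $W_G(K)$-action.

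For part (3), use that the diagram of Theorem \ref{thm:pictotal1cartesian} is $1$-cartesian and hence induces a genuine pullback on homotopy categories. Preimages of $X$ in $\pi_0\cPic(\Sp^G_\omega/\cF)$ correspond to isomorphism classes of pairs $(M,\phi)$ with $M\in\cPic(\Fun(BW_G(K),\Mod_\omega(\Z)))$ and $\phi: HM\xto{\sim} H\overline{\Phi}^K X$ in $\cPic(\StMod^{\fin}(\Z W_G(K)))$. The $\Pic^0$ constraint on the lift forces $M\simeq\Z[0]$ with trivial $W_G(K)$-action, and by (2) the target of $\phi$ is also $\Z[0]$. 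Thus $\phi$ is an automorphism of $\Z[0]$ in $\StMod^{\fin}(\Z W_G(K))$, which by Lemma \ref{lem:mappingtate} is controlled by $\pi_0(\Z^{tW_G(K)})=\Z/|W_G(K)|$. Quotienting by the $\{\pm 1\}$-action coming from $\Aut_{\Fun}(\Z[0])$ and the residual contributions from $\Aut(X)$ acting through $H\overline{\Phi}^K$ yields the claimed count.

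The hard step is part (1): carefully tracking the mod-$|G|$ invariant through the induction and verifying that the gluing obstruction in the Tate-like category $\StMod^{\fin}(\bS W_G(K'))$ really dissolves after inverting $|G|$. The compensating structure is that $|W_G(K')|$ divides $|G|$ and is already zero in $\StMod^{\fin}$, so the gluing becomes essentially free in the $|G|$-torsion picture. Once (1) is in hand, (2) is immediate and (3) is a pure homotopical bookkeeping exercise.
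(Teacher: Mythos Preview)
Your overall architecture (induction for (1), deduce (2), count for (3)) matches the paper, and your arguments for (2) and (3) are essentially the paper's. The problem is in the inductive step for (1).

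You run the induction through the \emph{spherical} pullback of Theorem~\ref{thm:pullback}, and then assert that ``since $X\in\Pic^0$, $\Phi^{K'}X\simeq S^0$ as a $W_G(K')$-spectrum.'' This is not justified. Membership in $\Pic^0$ only tells you that the underlying spectrum of $\Phi^{K'}X$ is $S^0$ and that the $W_G(K')$-action on $H_0\cong\Z$ is trivial; it says nothing about the action being trivial in $\Fun(BW_G(K'),\Sp_\omega)$. A $W_G(K')$-action on $S^0$ is a map $BW_G(K')\to BGL_1(\bS)$, and triviality on $\pi_1$ (the orientation) does not force triviality of the whole map, since $GL_1(\bS)$ has nontrivial higher homotopy. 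Without this identification you cannot take $\beta=\id$, and the subsequent sentence (``compatibility \ldots holds automatically modulo $|G|$, after possibly adjusting $\beta$'') is too vague to repair the gap: you would need an actual element of $\pi_0\map_{\Fun(BW_G(K'),\Sp_\omega)}(S^0,\Phi^{K'}X)$ hitting a unit, and you have no handle on this mapping space.

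The paper sidesteps exactly this difficulty by running the induction through the $\Z$-linear $1$-cartesian square (Proposition~\ref{prop:pullbackdegreen} pasted with Theorem~\ref{thm:pullback}, i.e.\ the square of Theorem~\ref{thm:pictotal1cartesian}). There the lower-left corner is $\map_{\Fun(BW_G(K'),\Mod_\omega(\Z))}(\Z[0],\Z[0])$, and the hypothesis $X\in\Pic^0$ \emph{does} pin down $H\Phi^{K'}X\simeq\Z[0]$ with trivial action, because invertible objects of $\Fun(BW_G(K'),\Mod_\omega(\Z))$ are completely classified by dimension and sign (Lemma~\ref{lem:picardmodr}). The lower row is then just $\Z\to\Z/|W_G(K')|$, the image of $\alpha$ downstairs is a unit, and one lifts it to an integer coprime to $|G|$. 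The point of passing to $\Z$ is precisely to kill the ambiguity in the equivariant homotopy type of $\Phi^{K'}X$ that your argument stumbles on.
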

\begin{proof}
Let $\cF'$ denote $\cF\cup\{G/K\}$. We prove the lemma by induction on the size of $\cF'$. If $\cF'$ is all of $\Orb(G)$ (and $K$ necessarily $G$, $\cF$ the family of all nontrivial orbits of $G$), the first two statements are trivial, the third one is just the observation that an invertible object of $\Sp^G_\omega/\cF\simeq \Sp_\omega$ of dimension $0$ is equivalent to $S^0$.

For the inductive step, let $K'$ be such that $G/K'\not\in \cF'$ and that $\cF'\cup \{G/K'\}$ is still a family. Inductively, we may assume the statement of the lemma for $\cF'\cup \{G/K'\}$. Thus, the image of $X$ in $\Sp^G/\cF'\cup\{G/K'\}$ receives a map from $S^0\to X$ which is a mod $|G|$ equivalence. Proposition \ref{prop:pullbackdegreen} now says that in the square 
\[
\begin{tikzcd}
\map_{\Sp^G_\omega/\cF'}(S^0, X) \rar\dar & \map_{\Sp^G_\omega/\cF'\cup\{G/K'\}}(S^0, X)\dar\\
\map_{\Fun(BW_G(K'), \Mod_\omega(\Z))}(\Z[0],\Z[0]) \rar & \map_{\StMod(\Z W_G(K'))}(\Z[0],\Z[0])
\end{tikzcd}
\]
the map from the upper left corner to the pullback is an isomorphism on $\pi_0$. The given map $S^0\to X$ in $\Sp^G_\omega/\cF'\cup\{G/K'\}$ becomes an equivalence after reduction mod $|G|$ by assumption, and since the lower right corner has $\pi_0$ given by $\Z/|W_G(K')|$, in which $|G|$ is $0$, this shows that the image in the lower right corner is a unit of $\Z/|W_G(K')|$. We can determine a map $S^0 \to X$ in $\Sp^G_\omega/\cF'$ by choosing a preimage in $[\Z[0],\Z[0]]_{\Fun(BW_G(K'),\Mod_\omega(\Z))} = \Z$, this is necessarily a unit mod $|G|$. This proves the first statement.

For the second statement, note that the map $S^0\to X$ induces a map $\Phi^K S^0 \to \Phi^K X$, which after mod $|G|$ reduction induces an equivalence. Since in $\StMod(\Z W_G(K))$, we can check equivalences after mod $|G|$ reduction (every object is a retract of its mod $|G|$ reduction as $|G|$ is zero in $[\Z[0],\Z[0]]_{\StMod(\Z W_G(K))}$), this shows that the given map $S^0\to X$ induces an isomorphism on their images in $\StMod(\Z W_G(K))$, proving the second claim.

For the third claim, just note that giving a lift of $X$ to $\Pic^0(\Sp^G_\omega/\cF)$ is the same as giving an isomorphism between $\Z[0]$ and $H\overline{\Phi}^K(X)$ in $\StMod(\Z W_G(K))$, and two such lifts are equivalent if the chosen isomorphisms differ by an automorphism of $\Z[0]$ in $\Fun(BW_G(K), \Mod_\omega(\Z))$. Those automorphisms are precisely given by signs. Since we have seen above that $H\overline{\Phi}^K(X)$ is equivalent to $\Z[0]$ in $\StMod(\Z W_G(K))$, and the automorphisms of $\Z[0]$ in that category are precisely $(\Z/W_G(K))^\times$, we see that the lifts of $X$ are a free orbit of $(\Z/W_G(K))^\times/\{\pm 1\}$.
\end{proof}

\begin{corollary}
\label{cor:pic0surjective}
$\Pic^0(\Sp^G_\omega/\cF) \to \Pic^0(\Sp^G_\omega/\cF\cup \{G/K\})$ is surjective with kernel $(\Z/W_G(K))^\times/\{\pm 1\}$. In particular, the group $\Pic^0(\Sp^G_\omega/\cF)$ has order $\prod_{G/H\not\in \cF} |(\Z/W_G(K))^\times/\{\pm 1\}|$.
\end{corollary}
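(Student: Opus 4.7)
The plan is to treat this corollary as essentially a direct consequence of Lemma \ref{lem:liftingpic0}, with two small additional observations: the set-theoretic fiber count must be upgraded to a group-theoretic identification of the kernel, and the order formula requires an induction on the size of $\Orb(G)\setminus\cF$.

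First, I would derive surjectivity and the size of the kernel from Lemma \ref{lem:liftingpic0}(3). Since every $X\in\Pic^0(\Sp^G_\omega/\cF\cup\{G/K\})$ admits at least one preimage, the map is surjective; since each fiber has cardinality $|(\Z/|W_G(K)|)^\times/\{\pm 1\}|$, so does the kernel in particular. To identify the kernel as a group, I would unwind the $1$-cartesian square of Theorem \ref{thm:pictotal1cartesian} applied to the trivial object: a lift of the unit is specified by a choice of invertible object $L\in\cPic(\Fun(BW_G(K),\Mod_\omega(\Z)))$ of dimension $0$ with trivial $W_G(K)$-action together with an isomorphism $HL\simeq H\overline{\Phi}^K(\unit)$ in $\StMod^\fin(\Z W_G(K))$, modulo automorphisms of $L=\Z[0]$ in the lower left. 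By Lemma \ref{lem:liftingpic0}(2), the target is also $\Z[0]$, and by Lemma \ref{lem:mappingtate} the endomorphism ring in $\StMod^\fin(\Z W_G(K))$ is $\Z^{tW_G(K)}$, so $\Aut(\Z[0])=(\Z/|W_G(K)|)^\times$. The automorphisms of $\Z[0]$ over $\Fun(BW_G(K),\Mod_\omega(\Z))$ are just $\pm 1$, and so the kernel is canonically identified with $(\Z/|W_G(K)|)^\times/\{\pm 1\}$; the fact that this identification respects tensor product (hence group structure on $\Pic^0$) is immediate from the fact that the gluing data in the pullback diagram composes under tensor.

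For the order formula I would then induct on $|\Orb(G)|-|\cF|$. The base case $\cF=\Orb(G)$ makes $\Sp^G_\omega/\cF=0$, so $\Pic^0(\Sp^G_\omega/\cF)$ is trivial and the empty product equals $1$. For the inductive step, pick $G/K$ minimal in $\Orb(G)\setminus\cF$, so that $\cF\cup\{G/K\}$ is still a family, and combine the (already established) short exact sequence
\[
1\to (\Z/|W_G(K)|)^\times/\{\pm 1\}\to \Pic^0(\Sp^G_\omega/\cF)\to \Pic^0(\Sp^G_\omega/\cF\cup\{G/K\})\to 1
\]
with the inductive hypothesis for $\cF\cup\{G/K\}$.

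I do not anticipate a genuine obstacle: the substantial content — the lifting, the mod $|G|$ equivalence, and the torsor structure on fibers — is already packaged in Lemma \ref{lem:liftingpic0}. The only nontrivial bookkeeping is verifying that the torsor action identifies the kernel \emph{as a group} with $(\Z/|W_G(K)|)^\times/\{\pm 1\}$, which amounts to checking that twisting the gluing equivalence by a unit $u$ in $\StMod^\fin(\Z W_G(K))$ corresponds, under tensor product of invertible objects, to multiplication of units, and this is transparent from the pullback description in Theorem \ref{thm:pictotal1cartesian}.
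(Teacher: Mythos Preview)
Your proposal is correct and matches the paper's intent: the corollary is stated without proof, as an immediate consequence of Lemma~\ref{lem:liftingpic0}(3) (whose proof already exhibits each fiber as a free orbit of $(\Z/|W_G(K)|)^\times/\{\pm 1\}$) together with the obvious induction on $|\Orb(G)\setminus\cF|$. Your extra care in checking that the torsor identification on the fiber over the unit respects the group structure via the symmetric-monoidal pullback of Theorem~\ref{thm:pictotal1cartesian} is a detail the paper leaves implicit, but it is exactly the right observation.
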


\begin{corollary}
\label{cor:dimfunctionindependence}
Whether an element of $\Pic(\Sp^G_\omega/\cF\cup\{G/K\})$ can be lifted to $\Pic(\Sp^G_\omega/\cF)$ depends solely on its partial generalized dimension function, i.e. image in $\cD_{\cF\cup\{G/K\}}(G)$.
More precisely, given two invertible objects $X,Y$ of $\Sp^G_\omega/\cF\cup\{G/K\}$ with the same image $d=d(X)=d(Y)$ in $\cD_{\cF\cup\{G/K\}}(G)$, and a choice of extension $\widetilde{d}\in \cD_{\cF}$ of $d$, there exists a lift of $X$ to $\Pic(\Sp^G_\omega/\cF)$ with partial generalized dimension function $\widetilde{d}$ if and only if such a lift exists of $Y$.
\end{corollary}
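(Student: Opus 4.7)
The plan is to reduce the statement, via the $1$-cartesian diagram of Theorem \ref{thm:pictotal1cartesian}, to a consequence of Lemma \ref{lem:liftingpic0}(2) that is already on record. Write $\cF' = \cF\cup\{G/K\}$ for brevity. Unwinding Theorem \ref{thm:pictotal1cartesian}, a lift of an invertible $X\in \Pic(\Sp^G_\omega/\cF')$ to $\Pic(\Sp^G_\omega/\cF)$ whose generalized dimension at $K$ equals $\widetilde d_K$ is the same data as an equivalence in $\cPic(\StMod^\fin(\Z W_G(K)))$ between $H\overline{\Phi}^K(X)$ and the invertible object $\Z[\widetilde d_K]$ determined by the integer and sign character components of $\widetilde d_K$ (as classified by Lemma \ref{lem:picardmodr}). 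Such a lift therefore exists if and only if $H\overline{\Phi}^K(X)\simeq \Z[\widetilde d_K]$ as an invertible object of $\StMod^\fin(\Z W_G(K))$, and the same criterion applies with $Y$ in place of $X$. It thus suffices to establish the equivalence
\[
H\overline{\Phi}^K(X) \simeq H\overline{\Phi}^K(Y)
\]
in $\Pic(\StMod^\fin(\Z W_G(K)))$.

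The next step is to exploit multiplicativity. Both $\overline{\Phi}^K$, as the right vertical arrow in the symmetric-monoidal pullback of Theorem \ref{thm:pullback}, and base change $H$ along $\bS\to H\Z$ are symmetric-monoidal, so $H\overline{\Phi}^K$ induces a group homomorphism on Picard groups. Since $X$ and $Y$ share a partial generalized dimension function in $\cD_{\cF'}$, the object $X\otimes Y^{-1}$ lies in $\Pic^0(\Sp^G_\omega/\cF')$. Applying Lemma \ref{lem:liftingpic0}(2) to $X\otimes Y^{-1}$ yields $H\overline{\Phi}^K(X\otimes Y^{-1})\simeq \Z[0]$ in $\StMod^\fin(\Z W_G(K))$, which by multiplicativity of $H\overline{\Phi}^K$ is precisely the required equivalence.

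There is no real obstacle here beyond matching definitions: the substantive content, namely that Picard-trivial invertible objects map to the monoidal unit of the stable module category, has already been extracted in Lemma \ref{lem:liftingpic0}(2), and Theorem \ref{thm:pictotal1cartesian} supplies the dictionary translating it into the present corollary.
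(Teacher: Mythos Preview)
Your argument is correct and rests on the same key input as the paper: the object $X\otimes Y^{-1}$ lies in $\Pic^0(\Sp^G_\omega/\cF\cup\{G/K\})$, and Lemma~\ref{lem:liftingpic0} controls its behaviour. The packaging differs slightly: the paper invokes Corollary~\ref{cor:pic0surjective} to lift $X\otimes Y^{-1}$ to some $\widetilde{X\otimes Y^{-1}}\in\Pic^0(\Sp^G_\omega/\cF)$, and then simply tensors a given lift of $Y$ with $\widetilde{X\otimes Y^{-1}}$ to produce a lift of $X$ with the same partial generalized dimension function. You instead unpack the $1$-cartesian square and translate ``liftability with prescribed $\widetilde d_K$'' into ``$H\overline{\Phi}^K(X)\simeq \Z[\widetilde d_K]$ in $\StMod^\fin(\Z W_G(K))$'', then use part (2) of Lemma~\ref{lem:liftingpic0} to see this condition is identical for $X$ and $Y$. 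The paper's version is a one-liner because the pullback bookkeeping is already absorbed into Corollary~\ref{cor:pic0surjective}; yours makes the mechanism a bit more visible. Either way the content is the same.
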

\begin{proof}
Under the given assumptions, the object $X\otimes Y^{-1}$ is in $\Pic^0(\Sp^G_\omega/\cF\cup\{G/K\})$, and by Corollary \ref{cor:pic0surjective} admits a lift $\widetilde{X\otimes Y^{-1}}$ to $\Pic^0(\Sp^G_\omega/\cF)$. Given a lift of $Y$ as described, we can tensor it with $\widetilde{X\otimes Y^{-1}}$ to obtain a lift of $X$ as described, and similarly vice versa.
\end{proof}

Corollary \ref{cor:dimfunctionindependence} allows us to reduce the inductive study of $\Pic(\Sp^G_\omega/\cF)$ to the easier-to-manage question of which elements of $\cD_\cF(G)$ are partial generalized dimension functions of invertible objects of $\Sp^G_\omega/\cF$.

\begin{corollary}
\label{cor:orientationbehaviour}
f $X\in \Sp^G_\omega/\cF$ is invertible and has dimension function constant $0$, the generalized dimension function is also $0$ (i.e. the orientation behaviour is also trivial). As a result, if two invertible objects have the same dimension function, they have the same generalized dimension function.
\end{corollary}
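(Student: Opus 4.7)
The plan is to induct on the number of orbits in $\cF^c = \Orb(G)\setminus \cF$; the base case $\cF^c = \emptyset$ is vacuous. For the inductive step, I would pick $G/K\in \cF^c$ minimal, so that $\cF' := \cF\cup\{G/K\}$ is still a family, and let $X'$ denote the image of $X$ in $\Sp^G_\omega/\cF'$. The dimension function of $X'$ on $\cF'^c\subsetneq \cF^c$ is still identically zero, so the inductive hypothesis applied to $\cF'$ puts $X'$ in $\Pic^0(\Sp^G_\omega/\cF')$. Lemma \ref{lem:liftingpic0}(2) then identifies $H\overline{\Phi}^K(X')$ with $\Z[0]$ in $\StMod^{\fin}(\Z W_G(K))$.

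Next, I would use the $1$-cartesian square of Theorem \ref{thm:pictotal1cartesian} to represent $X$ by a triple $(X', M, \phi)$, where $M = H\Phi^K(X)$ is invertible in $\Fun(BW_G(K),\Mod_\omega(\Z))$ and $\phi$ is an equivalence between the images of $M$ and of $X'$ in $\StMod^{\fin}(\Z W_G(K))$. Since $\dim_K(X) = 0$, Lemma \ref{lem:picardmodr} forces $M$ to be $\Z$ in degree $0$ with $W_G(K)$-action some sign character $\omega = e_K(X)$; write $\Z_\omega$ for this object. The existence of the gluing equivalence $\phi$ thus reduces to the claim that $\Z_\omega$ is isomorphic to $\Z$ in $\StMod^{\fin}(\Z W_G(K))$.

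The crux of the argument, which I expect to be the main (though short) computation, is the Tate-cohomology calculation ruling this out for nontrivial $\omega$. By Lemma \ref{lem:mappingtate},
\[
\pi_0\map_{\StMod^{\fin}(\Z W_G(K))}(\Z,\Z_\omega) \simeq \hat{H}^0(W_G(K),\Z_\omega),
\]
and for a nontrivial sign character $\omega$ no nonzero integer is $W_G(K)$-invariant in $\Z_\omega$, so $(\Z_\omega)^{W_G(K)} = 0$ and hence the Tate group vanishes. Thus no nonzero map $\Z\to\Z_\omega$ exists in $\StMod^{\fin}(\Z W_G(K))$, in particular no isomorphism, and $\omega$ must be trivial. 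Combined with the inductive hypothesis, this places $X$ in $\Pic^0(\Sp^G_\omega/\cF)$ and closes the induction. The second assertion follows formally by applying the first to $X\otimes Y^{-1}$.
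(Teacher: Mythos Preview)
Your proof is correct and follows essentially the same route as the paper: induction along the tower of families, invoking Lemma~\ref{lem:liftingpic0}(2) to identify $H\overline{\Phi}^K(X')$ with $\Z[0]$, and then a Tate $\hat H^0$ computation to rule out a nontrivial sign character. The only cosmetic difference is that the paper distinguishes $\Z[0]$ from $\Z_\chi[0]$ by computing both $[\Z[0],\Z[0]]=\Z/|W_G(K)|$ and $[\Z[0],\Z_\chi[0]]=0$, whereas you observe directly that the vanishing of the latter precludes any isomorphism; these are the same argument.
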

\begin{proof}
Let $G/K$ be such that $\cF\cup\{G/K\}$ is a bigger family. By inductively applying the result to the image of $X$ in $\Sp^G_\omega/\cF\cup \{G/K\}$, we may assume that image to lie in $\Pic^0(\Sp^G_\omega/\cF\cup\{G/K\})$. By Lemma \ref{lem:liftingpic0}, $H\Phi^K(X)$ is equivalent in $\StMod(\Z W_G(K))$ to $\Z[0]$ with the trivial action. So the statement follows from the observation that in $\StMod(\Z W_G(K))$, $\Z[0]$ is never equivalent to $\Z_\chi[0]$ for some nontrivial action $\chi: W_G(K)\to \{\pm 1\}$: Indeed,
\begin{align*}
[\Z[0], \Z[0]]_{\StMod(\Z W_G(K))} &= \Z/|W_G(K)|,\\
[\Z[0], \Z_\chi[0]]_{\StMod(\Z W_G(K))} &= 0,
\end{align*}
distinguishing the two, as $|W_G(K)|>1$ (otherwise, there is no nontrivial $\chi$).
\end{proof}

As a result, the dimension function determines the orientation behaviour.

We now discuss some results about $p$-complete $G$-spectra $\Sp^G_p$. On compact objects, $p$-complete spectra agree with modules over the $p$-completion $(\bS_G)^\wedge_p$, so the coefficient version of Theorem \ref{thm:pictotal1cartesian} gives $1$-cartesian diagrams
\[
\begin{tikzcd}
\cPic((\Sp^G_p)_\omega/\cF) \rar\dar & \cPic((\Sp^G_p)_\omega/\cF\cup\{G/K\})\dar\\
\cPic(\Fun(BW_G(K), \Mod_\omega(\Z_p))) \rar & \cPic(\StMod(\Z_p G)).
\end{tikzcd}
\]

Note that invertible objects of $\Fun(BW_G(K), \Mod_\omega(\Z_p))$ are given by a pair in $\Z\times \Hom(W_G(K), \Z_p^\times)$. We denote the corresponding notion of generalized dimension functions by
\[
\cD_\cF(G; \Z_p) =\prod_{G/H\not\in \cF} \Z\times \Hom(W_G(K), \Z_p^\times),
\]
and define $\Pic^0((\Sp^G_p)_\omega/\cF)$ as the kernel of
\[
\Pic((\Sp^G_p)_\omega/\cF) \to \cD_{\cF}(G;\Z_p).
\]

\begin{lemma}
$\Pic^0((\Sp^G_p)_\omega/\cF)$ is trivial, i.e. a $p$-complete invertible object with trivial partial generalized dimension function is equivalent to $S^0$. As a result, $p$-complete invertible objects are classified by their generalized dimension function.
\end{lemma}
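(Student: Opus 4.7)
The plan is to induct on $|\Orb(G)\setminus \cF|$, mimicking the strategy behind Lemma \ref{lem:liftingpic0} and Corollary \ref{cor:pic0surjective} but applied to the $p$-complete $1$-cartesian square displayed just above the lemma. The base case $\cF = \Orb(G)$ is vacuous since $(\Sp^G_p)_\omega/\cF \simeq 0$. For the inductive step I fix $G/K\not\in \cF$ with $G/K'\in \cF$ for all $K'\subsetneq K$, set $\cF' = \cF\cup\{G/K\}$, and let $X\in \Pic^0((\Sp^G_p)_\omega/\cF)$.

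Since $X$ has trivial partial generalized dimension function, so does its image in $(\Sp^G_p)_\omega/\cF'$, which by the inductive hypothesis is equivalent to $S^0$. The $1$-cartesian square then identifies the isomorphism classes of lifts of $S^0$ carrying a prescribed $M\in \cPic(\Fun(BW_G(K), \Mod_\omega(\Z_p)))$ with the set of equivalences in $\cPic(\StMod^\fin(\Z_p W_G(K)))$ between the images of $M$ and $S^0$, modulo the induced actions of $\Aut(S^0)$ and $\Aut(M)$. Since $X\in\Pic^0$, the invertible $M$ must be $\Z_p[0]$ with trivial $W_G(K)$-action, so both $M$ and $S^0$ map to $\Z_p[0]$ with trivial action in the stable module category.

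The key step is a computation analogous to the one in Corollary \ref{cor:pic0surjective} but with a surjective reduction map in place of a nontrivial cokernel. By Lemma \ref{lem:mappingtate}, the endomorphism ring of $\Z_p[0]$ in $\StMod^\fin(\Z_p W_G(K))$ is $\pi_0 \Z_p^{tW_G(K)} \cong \Z/|W_G(K)|_p$, the $p$-part of $|W_G(K)|$, so its automorphism group is $(\Z/|W_G(K)|_p)^\times$; meanwhile the automorphisms of $\Z_p[0]$ preserving the trivial action in $\Fun(BW_G(K),\Mod_\omega(\Z_p))$ are $\Z_p^\times$. The natural reduction $\Z_p^\times \to (\Z/|W_G(K)|_p)^\times$ is \emph{surjective}, since any unit in $\Z/p^k$ admits a lift to a $p$-adic unit. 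Hence the set of lifts of $S^0$ is a singleton, $X\simeq S^0$, and the induction closes. The ``classified by their generalized dimension function'' clause then follows at once, since two invertibles with equal generalized dimension function differ by an element of $\Pic^0$. I do not anticipate serious obstacles; the instructive contrast with Corollary \ref{cor:pic0surjective} is that there $\Z^\times = \{\pm 1\}$ is too small to surject onto $(\Z/|W_G(K)|)^\times$, but here the abundance of $p$-adic units wipes out the ambiguity entirely.
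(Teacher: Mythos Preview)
Your argument is correct and follows essentially the same route as the paper: both proceed by induction along the chain of families, and the crux in each is that the reduction $\Z_p^\times \to (\Z_p/|W_G(K)|)^\times \cong (\Z/|W_G(K)|_p)^\times$ is surjective, so the gluing ambiguity collapses. The paper phrases this step as injectivity of $\Pic^0((\Sp^G_p)_\omega/\cF)\to \Pic^0((\Sp^G_p)_\omega/\cF\cup\{G/K\})$ rather than explicitly counting lifts, but the content is identical.
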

\begin{proof}
We simply show that $\Pic^0((\Sp^G_p)_\omega/\cF) \to \Pic^0((\Sp^G_p)_\omega/\cF\cup\{G/K\})$ is injective. An object in the kernel is given by an isomorphism in 
\[
[\Z[0],\Z[0]]_{\StMod(\Z_p W_G(K))}=\Z_p/|W_G(K)|,
\]
and two such isomorphisms lead to the same object if the isomorphisms differ by an isomorphism coming from 
\[
[\Z[0],\Z[0]]_{\Fun(BW_G(K),\Mod_\omega(\Z_p))}=\Z_p.
\]
As the map $\Z_p \to (\Z_p/|W_G(K)|)^\times$ is surjective, all elements in the kernel are trivial.
\end{proof}

\begin{proposition}
\label{prop:realisabilitypcomplete}
Let $d\in \cD_\cF(G)$ be a partial generalized dimension function. Then $d$ can be realized by an invertible object of $\Sp^G_\omega/\cF$ if and only if the image of $d$ in $\cD_\cF(G;\Z_p)$ can be realized by an invertible object of $(\Sp^G_p)_\omega/\cF$ for each $p$ dividing $|G|$.
\end{proposition}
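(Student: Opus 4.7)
The ``only if'' direction is immediate since $p$-completion preserves invertibility and generalized dimension functions. For the converse, the plan is to induct on the number of orbits $G/H$ not lying in $\cF$, using Theorem \ref{thm:pictotal1cartesian} at each stage to reduce the realization problem to an existence question for an equivalence in the integral stable module category. The base case $\cF = \Orb(G)$ is trivial; for the inductive step I pick $G/K\notin\cF$ such that $\cF \cup \{G/K\}$ is still a family, apply the inductive hypothesis to the restriction $d'$ of $d$ to obtain some invertible $X \in \Sp^G_\omega/\cF\cup\{G/K\}$ realizing $d'$, and then try to lift $X$ to an invertible object of $\Sp^G_\omega/\cF$ with partial dimension function $d$.

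The key observation is that, by Theorem \ref{thm:pictotal1cartesian} together with Corollary \ref{cor:dimfunctionindependence} (which says the ability to lift depends only on the dimension function, not on the particular choice of $X$), lifting $X$ amounts to producing an equivalence $H\overline{\Phi}^K X \simeq \Z[d_K]$ in $\StMod^\fin(\Z W_G(K))$. Here Proposition \ref{prop:idempotentcompletion} plays the crucial role: the functor
\[
\StMod^\fin(\Z W_G(K)) \to \prod_{p \mid |G|} \StMod^\fin(\Z_p W_G(K))
\]
is fully faithful, hence detects both maps and isomorphisms between objects in its image. It therefore suffices to produce the desired equivalence $p$-locally for each $p$ dividing $|G|$.

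The $p$-local equivalence will come from the hypothesis as follows. The image of $d$ in $\cD_\cF(G;\Z_p)$ is realized by some $p$-complete invertible $X_p \in (\Sp^G_p)_\omega/\cF$; its image in $(\Sp^G_p)_\omega/\cF \cup \{G/K\}$ has the same generalized dimension function as the $p$-completion of $X$, and by the preceding lemma (classifying $p$-complete invertibles by their generalized dimension function) these two objects are equivalent. Running $X_p$ through the $p$-local analogue of the $1$-cartesian diagram of Theorem \ref{thm:pictotal1cartesian} then yields an equivalence in $\StMod^\fin(\Z_p W_G(K))$ between $\Z_p[d_K]$ and the base change of $H\overline{\Phi}^K X$ to $\Z_p$, which is exactly the $p$-localization of the desired integral equivalence.

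The main subtlety I expect is coherence between the various $p$-local equivalences: one must verify that they genuinely arise as $p$-completions of a single integral map. Full faithfulness of the embedding into $\prod_{p \mid |G|} \StMod^\fin(\Z_p W_G(K))$ resolves this formally, since it lifts the tuple of $p$-local isomorphisms to a morphism in $\StMod^\fin(\Z W_G(K))$, which is automatically an equivalence because fully faithful functors reflect invertibility. In this sense the heavy lifting has already been done in Proposition \ref{prop:idempotentcompletion} and the classification lemma for $p$-complete invertibles; the proposition itself is mostly a matter of threading these ingredients through the inductive scaffolding provided by Theorem \ref{thm:pictotal1cartesian}.
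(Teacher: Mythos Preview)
Your proposal is correct and follows essentially the same route as the paper: induct on the size of $\cF$, use Theorem \ref{thm:pictotal1cartesian} to reduce the lifting problem to producing an equivalence $H\overline{\Phi}^K X \simeq \Z[d_K]$ in $\StMod^\fin(\Z W_G(K))$, and then invoke the full faithfulness from Proposition \ref{prop:idempotentcompletion} to reduce to the $p$-local case, which is supplied by the hypothesis. If anything, your write-up is more explicit than the paper's about \emph{why} the $p$-local equivalences exist (you spell out the comparison between the $p$-completion of $X$ and the image of $X_p$ via the classification of $p$-complete invertibles), whereas the paper simply asserts this step.
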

\begin{proof}
One direction is clear. For the converse, assume $d$ can be realized $p$-completely for each $p$ dividing $|G|$. We have to show that it can be realized in $\Sp^G_\omega/\cF$. Assume we have inductively already found an invertible $X\in \Sp^G_\omega/\cF\cup\{G/K\}$ whose partial generalized dimension function is the restriction of $d$. Then we need to find an isomorphism in $\StMod(\Z W_G(K))$ between $\Z[d_K]$ and $H\overline{\Phi}(X)$. By assumption we have such isomorphisms in $\StMod(\Z_p W_G(K))$ for each $p$, since $d$ can be realized $p$-completely. As
\[
\StMod(\Z W_G(K))\to \prod_{p\mid |G|} \StMod(\Z_p W_G(K))
\]
is fully faithful by Proposition \ref{prop:idempotentcompletion}, this finishes the proof.
\end{proof}

\section{Naturality}

The $1$-cartesian diagrams from Theorem \ref{thm:pictotal1cartesian} are natural in two important ways. First, observe that for a subgroup $K\subseteq G$, the geometric fixed points functor $\Phi^K: \Sp^G \to \Sp$ actually admits a refinement through \emph{genuine} $W_G(K)$-spectra, $\Phi^K: \Sp^G \to \Sp^{W_G(K)}$. For example, this comes from the fact that the $K$-fixed points functor
\[
\cS^G \to \cS^{W_G(K)}
\]
from $G$-spaces to $W_G(K)$-spaces, sends representation spheres $S^V$ to representation spheres $S^{V^K}$ and thus induces a unique symmetric monoidal colimit-preserving functor from $G$-spectra to $W_G(K)$-spectra, similar to how usual geometric fixed points are characterized by what they do on suspension spectra.

For a family of orbits $\cF\subseteq \Orb(G)$ not containing $G/K$, observe that $\Phi^K$ vanishes on all $\Sigma^\infty_+ G/H$, $H\in \cF$. Indeed, if $(G/H)^K \neq \emptyset$ for some $H$, $K$ is contained in some conjugate of $H$. But then $G/H$ cannot be contained in $\cF$, since $\cF$ is a family.

If $\cF$ contains $G/K'$ for all proper subgroups $K'\subsetneq K$, we thus get symmetric monoidal functors induced by $\Phi^K$:
\begin{align*}
\Sp^G/\cF &\to \Sp^{W_G(K)}\\
\Sp^G/(\cF\cup \{G/K\})&\to \Sp^{W_G(K)}/\{ W_G(K)/e\}
\end{align*}

These fit together to give a diagram 

\begin{equation}
\label{eq:geomfixedpointsdiag}
\begin{tikzcd}
\cPic(\Sp^G/\cF) \rar\dar & \cPic(\Sp^G/\cF\cup\{G/K\})\dar\\
\cPic(\Sp^{W_G(K)})\dar\rar & \cPic(\Sp^{W_G(K)}/\{ W_G(K)/e\})\dar\\
\cPic(\Fun(BW_G(K), \Mod_\omega(\Z)))\rar & \cPic(\StMod(\Z W_G(K)))
\end{tikzcd}
\end{equation}
in which the bottom and outer squares are $1$-cartesian, hence the upper square is also $1$-cartesian.

\begin{definition}
Let $d\in \cD_{\cF}(G)$. For every $G/K\not\in \cF$, we define an element $\varphi^K d \in \cD(W_G(K))$ as follows: To every subgroup $H\subseteq W_G(K)$, we assign its preimage $\widetilde{H}\subseteq N_G(K)$, and we get maps $W_{W_G(K)}(H)\xleftarrow{\cong} W_{N_G(K)}(\widetilde{H})\to W_{G}(\widetilde{H})$. We let 
\[
(\varphi^K d)_H \in \Z\times \Hom(W_{W_G(K)}(H),\{\pm 1\})
\]
be obtained from $d_{\widetilde{H}}\in \Z\times \Hom(W_{G}(\widetilde{H}))$ by restriction in the second factor.
\end{definition}

\begin{proposition}
\label{prop:realisabilityweyl}
Let $d\in \cD_{\cF}(G)$. Then $d$ can be realized as partial generalized dimension function of an object of $\cPic(\Sp^G_\omega/\cF)$ if and only if $\varphi^K d$ can be realized as generalized dimension function of an object of $\cPic(\Sp^{W_G(K)}_\omega)$ for each $G/K\not\in \cF$.
\end{proposition}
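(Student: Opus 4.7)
The approach is to induct downwards on $|\Orb(G)\setminus\cF|$, applying the upper $1$-cartesian square of diagram \eqref{eq:geomfixedpointsdiag} at each step. The ``only if'' direction is immediate: if $X\in\cPic(\Sp^G_\omega/\cF)$ realizes $d$, then for any $G/K\notin\cF$ the refined geometric fixed point $\Phi^K X\in\cPic(\Sp^{W_G(K)}_\omega)$ satisfies, at a subgroup $H\subseteq W_G(K)$ with preimage $\widetilde{H}\subseteq N_G(K)$, $\dim\Phi^H\Phi^K X=\dim\Phi^{\widetilde{H}}X=d_{\widetilde{H}}$ with the restricted orientation behaviour, which is exactly $(\varphi^K d)_H$.

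For the converse, I would argue by induction. The base case $\cF=\Orb(G)$ is vacuous. In the inductive step, pick a minimal orbit $G/K\notin\cF$, set $\cF'=\cF\cup\{G/K\}$, and let $d'\in\cD_{\cF'}(G)$ be the restriction of $d$. For each $G/K'\notin\cF'$ one checks that $\varphi^{K'}d'=\varphi^{K'}d$, since $\varphi^{K'}$ only consults values of $d$ at subgroups $\widetilde{H}\supseteq K'$ and such $\widetilde{H}$ lie outside $\cF'$ (otherwise $K'\in\cF'$, a contradiction). Hence $d'$ satisfies the inductive hypothesis and is realized by some $Y\in\cPic(\Sp^G_\omega/\cF')$.

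The main obstacle will be to lift $Y$ to some $X\in\cPic(\Sp^G_\omega/\cF)$ with partial generalized dimension function exactly $d$; for this I invoke essential surjectivity in the upper square of \eqref{eq:geomfixedpointsdiag}. By hypothesis $\varphi^K d$ is realized by some $Z\in\cPic(\Sp^{W_G(K)}_\omega)$, and in $\cPic(\Sp^{W_G(K)}_\omega/\{W_G(K)/e\})$ both the image $\overline{Z}$ and $\Phi^K Y$ have the same generalized dimension function (namely $\varphi^K d$ restricted to non-trivial orbits of $W_G(K)$). Their ratio in $\Pic$ therefore lies in $\Pic^0(\Sp^{W_G(K)}_\omega/\{W_G(K)/e\})$, which by Corollary \ref{cor:pic0surjective} is the image of $\Pic^0(\Sp^{W_G(K)}_\omega)$. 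Lifting the ratio to some $U\in\Pic^0(\Sp^{W_G(K)}_\omega)$ and replacing $Z$ by $Z\otimes U$ yields a representative still realizing $\varphi^K d$ whose image agrees with $\Phi^K Y$ in $\Pic$; any such isomorphism gives a vertex of the pullback $\cPic(\Sp^G_\omega/\cF')\times_{\cPic(\Sp^{W_G(K)}_\omega/\{W_G(K)/e\})}\cPic(\Sp^{W_G(K)}_\omega)$, and essential surjectivity of the upper square then produces the desired $X\in\cPic(\Sp^G_\omega/\cF)$.
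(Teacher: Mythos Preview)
Your proof is correct and follows essentially the same inductive strategy as the paper: reduce to $\cF'=\cF\cup\{G/K\}$, realize $d'$ by induction, and then use that $\varphi^K d$ is realizable over $W_G(K)$ together with a $1$-cartesian square from diagram \eqref{eq:geomfixedpointsdiag} to lift. The only cosmetic difference is that you work with the \emph{upper} square and invoke Corollary \ref{cor:pic0surjective} directly to adjust $Z$, whereas the paper routes through the outer and bottom squares and cites the repackaged Corollary \ref{cor:dimfunctionindependence}; these are equivalent manoeuvres.
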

\begin{proof}
We proceed by induction over the size of $\cF$, starting with $\cF=\Orb(G)$, where the statement is trivial. So assume we know the statement for all families larger than $\cF$, let $G/K\not\in \cF$ with $G/K'\in \cF$ for all proper subgroups $K'\subsetneq K$. We now consider diagram \eqref{eq:geomfixedpointsdiag}.
The value of $d$ at $K$ determines an object in the lower left corner, and by induction the image of $d$ in $\cD_{\cF\cup\{G/K\}}(G)$ can be realized by an object $X$ in the upper right corner. We need to find ``gluing data'', i.e. an isomorphism between their images in the lower right corner. The image $\overline{\Phi}^K X$ in the middle right term has dimension function given by the image of $\varphi^K d$ in $\cD_{\{ W_G(K)/e\}}(W_G(K))$. Since the dimension function $\varphi^K d$ can be realized by assumption, by Corollary \ref{cor:dimfunctionindependence} $\overline{\Phi}^K X$ lifts to an object of $\cPic(\Sp^{W_G(K)})$ with the desired image in the lower left corner. Since the bottom square is $1$-cartesian, this produces the desired gluing isomorphism in the lower right corner.
\end{proof}
Proposition \ref{prop:realisabilityweyl} shows that we can inductively reduce realisability of generalized dimension functions in $\cD_{\cF}(G)$ to realisability of generalized dimension functions in $\cD(W_G(K))$. For $\cF$ a nonempty family, this involves only groups $W_G(K)$ strictly smaller than $G$. However, when analyzing realizability of entire generalized dimension functions, i.e. elements of $\cD(G)$, Proposition \ref{prop:realisabilityweyl} does not help in deciding whether we can lift from $\cPic(\Sp^G_\omega/\{ G/e\})$ to $\cPic(\Sp^G)$, since the Weyl group of $e\subseteq G$ is $G$ again.

For understanding the behaviour of the square
\[
\begin{tikzcd}
\cPic(\Sp^G_\omega) \rar\dar & \cPic(\Sp^G_\omega/\{ G/e\})\dar\\
\cPic(\Fun(BG, \Mod_\omega(\Z)))\rar & \cPic(\StMod(\Z G))
\end{tikzcd}
\]
we analyze another notion of naturality: Given a subgroup $G'\subseteq G$, we get induced symmetric-monoidal restriction functors
\begin{align*}
\Sp^G_\omega &\to \Sp^{G'}_\omega\\
\Sp^G_\omega/\{ G/e\} &\to \Sp^{G'}_\omega/\{ G'/e\}.
\end{align*}
These induce a map between the $1$-cartesian squares given by Theorem \ref{thm:pictotal1cartesian}:

\[
\begin{tikzcd}[execute at end picture={
\draw[->] (0.5,0.5) -- (0.5,-0.5);
}]
\cPic(\Sp^G_\omega) \rar\dar & \cPic(\Sp^G_\omega/\{ G/e\})\dar\\
\cPic(\Fun(BG, \Mod_\omega(\Z)))\rar & \cPic(\StMod(\Z G))\\
\\
\cPic(\Sp^{G'}_\omega) \rar\dar & \cPic(\Sp^{G'}_\omega/\{ G'/e\}) \dar\\
 \cPic(\Fun(BG', \Mod_\omega(\Z)))\rar & \cPic(\StMod(\Z G'))
\end{tikzcd}
\]

\begin{proposition}
\label{prop:gluingstableelement}
Let $d\in \cD(G)$, and assume that the image of $d$ in $\cD_{\{ G/e\}}(G)$ is realizable by an invertible object $X$ of $\Sp^G_\omega/\{ G/e\}$. Then $d$ can be realized by an invertible object of $\Sp^G$ if and only if 
for every $p$, there is an isomorphism between $\Z[d_e]$ and the image of $X$ in $\StMod(\Z P)$ (for $P$ a $p$-Sylow subgroup) which is stable in the sense of Proposition \ref{prop:stmodstable}.
\end{proposition}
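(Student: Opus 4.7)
The plan is to translate realizability of $d$ into the existence of a gluing isomorphism in $\StMod(\Z G)$, and then analyze that isomorphism prime-by-prime. By Theorem \ref{thm:pictotal1cartesian} applied with $\cF = \{G/e\}$ and $K = e$, producing an invertible object of $\Sp^G_\omega$ with generalized dimension function $d$ is the same as producing the pair $(X, \Z[d_e])$ together with an isomorphism $\Z[d_e] \simeq H\overline{\Phi}^e(X)$ in $\cPic(\StMod(\Z G))$. The hypothesis fixes $X$, so everything reduces to the question of when such an isomorphism in $\StMod(\Z G)$ exists.

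I would then invoke Proposition \ref{prop:idempotentcompletion}, which says the functor
\[
\StMod(\Z G) \to \prod_{p \mid |G|} \StMod(\Z_p G)
\]
is fully faithful. Since fully faithful functors both preserve and reflect isomorphisms, an isomorphism $\Z[d_e] \simeq H\overline{\Phi}^e(X)$ in $\StMod(\Z G)$ exists precisely when such an isomorphism exists in each factor $\StMod(\Z_p G)$. For each $p$, Proposition \ref{prop:stmodstable} identifies morphisms in $\StMod(\Z_p G)$ with the stable elements in $\StMod(\Z_p P)$ for $P$ a Sylow $p$-subgroup. This description is faithful; it is also conservative on isomorphisms, because if a map has cofiber $C$ with $C|_P \simeq 0$ in $\StMod(\Z_p P)$, then $\id_C$ vanishes on $P$, and since $\id_C$ is trivially stable it must already vanish in $\StMod(\Z_p G)$. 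Hence an isomorphism in $\StMod(\Z_p G)$ is precisely a stable isomorphism in $\StMod(\Z_p P)$. Finally, $P$ being a $p$-group makes the mapping Tate spectra from Lemma \ref{lem:mappingtate} automatically $p$-complete, so $\StMod(\Z P) \to \StMod(\Z_p P)$ is fully faithful on objects pulled back from $\Z$, allowing the criterion to be phrased over $\Z$ as in the statement.

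Assembling these pieces, the forward direction is immediate: a genuine invertible lift of $d$ yields a gluing isomorphism in $\StMod(\Z G)$ whose restriction to $\StMod(\Z P)$ is $G$-equivariant and hence stable. Conversely, given a stable isomorphism in each $\StMod(\Z P)$, Proposition \ref{prop:stmodstable} lifts it uniquely to an isomorphism in $\StMod(\Z_p G)$; the fully faithful embedding from Proposition \ref{prop:idempotentcompletion} packages this compatible family as a single isomorphism in $\StMod(\Z G)$; and Theorem \ref{thm:pictotal1cartesian} then delivers the genuine invertible object realizing $d$. The main point I anticipate needing care with is the coefficient comparison at the Sylow level: Proposition \ref{prop:stmodstable} is stated over $\Z_p$ while the criterion is phrased over $\Z$, so one must verify that the two stability notions coincide by appeal to the $p$-completeness of Tate constructions on $p$-groups. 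This is the one nontrivial bookkeeping step, but it should be routine once set up correctly.
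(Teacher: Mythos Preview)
Your proposal is correct and follows essentially the same route as the paper. The paper's proof cites Proposition~\ref{prop:realisabilitypcomplete} to reduce to the $p$-complete case and then invokes Proposition~\ref{prop:stmodstable}; you instead inline the content of Proposition~\ref{prop:realisabilitypcomplete} by appealing directly to the fully faithful embedding of Proposition~\ref{prop:idempotentcompletion}, which is exactly how Proposition~\ref{prop:realisabilitypcomplete} is itself proved. One small point you gloss over: the $1$-cartesian square only tells you that realizing $d$ amounts to finding \emph{some} invertible $X'$ in $\Sp^G_\omega/\{G/e\}$ with the right partial dimension function together with a gluing isomorphism for $X'$, not necessarily for the given $X$; you need Corollary~\ref{cor:dimfunctionindependence} (or Corollary~\ref{cor:pic0surjective}) to justify fixing $X$ once and for all. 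The paper's proof has the same implicit dependence, hidden inside the proof of Proposition~\ref{prop:realisabilitypcomplete}. Your explicit handling of the $\Z$ versus $\Z_p$ comparison at the Sylow level is a point the paper leaves tacit.
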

\begin{proof}
By Proposition \ref{prop:realisabilitypcomplete} it suffices to realize $d$ in $\cPic(\Sp^G_p)$ for each $p$. We thus have to find an isomorphism between $H\overline{\Phi}^e(X)$ and $\Z[d_e]$ in $\StMod(\Z_p G)$. Now the statement is precisely the stable elements formula for maps in $\StMod(\Z_p G)$ from Proposition \ref{prop:stmodstable}.
\end{proof}

\section{Applications}
\label{sec:applications}
We now explicitly carry out the analysis of $\cPic(\Sp^G)$ via Theorem \ref{thm:pictotal1cartesian} for small groups $G$, leading up to $G=A_5$.
\subsection{The group $C_p$}

For $G=C_p$, Theorem \ref{thm:pictotal1cartesian} applied to $K=G/e$ yields a $1$-cartesian diagram
\[
\begin{tikzcd}
\cPic(\Sp^{C_p})\rar\dar & \cPic(\Sp_\omega/\{ G/e\})\dar\\
\cPic(\Fun(BC_p, \Mod_\omega(\Z))) \rar & \cPic(\StMod^\fin(\Z C_p))
\end{tikzcd}
\]
and Theorem \ref{thm:pullback} applied to $K=C_p$ furthermore identifies $\Sp_\omega/\{ G/e\}$ with $\Sp_\omega$, via geometric fixed points $\Phi^{C_p}$.

An invertible $C_p$-spectrum is therefore given by a dimension $d_e$ for its underlying spectrum $\Phi^e X$, together with an orientation behaviour given by an action of $C_2$ on $\Z$ if $p=2$, a dimension $d_{C_p}$ for its geometric fixed points $\Phi^{C_p} X$, and an isomorphism in $\StMod(\Z C_p)$ between $\Z[d_e]$ and $\Z[d_{C_p}]$.

For odd $p$, we have
\[
[\Z[d_e], \Z[d_{C_p}]]_{\StMod(\Z C_p)} =\begin{cases}
0 \text{ if } d_e-d_{C_p} \text{ odd,}\\
\F_p \text{ if } d_e-d_{C_p} \text{ even}.
\end{cases}
\]
So there exists an $X$ with dimensions $d_e$ and $d_{C_p}$ precisely if $n_e-n_{C_p}$ is even. Of the $\F_p^{\times}$ many isomorphisms we can choose as gluing data in that case, two lead to isomorphic $X$ precisely if they agree in $\F_p/ \{\pm 1\}$, since the automorphisms of that lift to the upper right and lower left categories of our pullback are precisely those given by signs.
This fits together with the observation established in Corollary \ref{cor:pic0surjective} that there are $\frac{p-1}{2}$ many different invertible $X$ with $\Phi^e X = S^0$ and $\Phi^{C_p} X = S^0$, and more generally that every realizable dimension function has $\frac{p-1}{2}$ different invertible $X$ realizing it.

For $p=2$, a generalized dimension function not only encodes dimensions $d_e$ and $d_{C_2}$, but also an action of $C_2$ on $\Z[d_e]$, the orientation behaviour. Here we have
\begin{align*}
&[\Z[d_e], \Z[d_{C_2}]]_{\StMod(\Z C_p)}\\
&=\begin{cases}
\F_2 \text{ if } d_e-d_{C_2} \text{ even, orientation behaviour of $d_e$ trivial}.\\
\F_2 \text{ if } d_e-d_{C_2} \text{ odd, orientation behaviour of $d_e$ nontrivial}.\\
0 \text{ if } d_e-d_{C_2} \text{ otherwise.}\\
\end{cases}
\end{align*}
It follows that in this case every combination of dimensions can be realized by a unique invertible object, and the action of $C_2$ on $\Phi^e(X)$ is determined by the parity of $n_e-n_{C_2}$.

\subsection{The group $C_p\times C_p$}

We now discuss $G=C_p\times C_p$ and illustrate how Theorem \ref{thm:pictotal1cartesian} reproduces a theorem of Borel.
The subgroup lattice of $C_p\times C_p$ consists of the trivial subgroup $e$, the full subgroup $C_p\times C_p$, and $p+1$ subgroups $C_p^i$ of order $p$.

\begin{theorem}[Borel, \cite{borel}]
Let $G=C_p\times C_p$, and let $X$ be an invertible $G$-spectrum. Then 
\[
\dim\Phi^{e}(X) - \sum_{i} \dim \Phi^{C_p^i}(X) + p\dim\Phi^{C_p\times C_p}(X)=0.
\]
\end{theorem}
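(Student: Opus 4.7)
The plan is to apply Theorem~\ref{thm:pictotal1cartesian} with $\cF = \{G/e\}$ and $K = e$, producing a $1$-cartesian square in which $\cPic(\Sp^G)$ sits above $\cPic(\Sp^G/\{G/e\})$ with gluing data in $\cPic(\StMod^\fin(\Z G))$. Any invertible $X$ then gives rise to an isomorphism in $\StMod^\fin(\Z G)$ between $H\Phi^e(X) = \Z[d_e, \chi_e]$ and $H\overline{\Phi}^e(X')$, where $X'$ denotes the image of $X$ in $\Sp^G/\{G/e\}$. The strategy is to compute $H\overline{\Phi}^e(X')$ in $\StMod^\fin(\Z G)$ in terms of $d_G$ and the $d_{C_p^i}$, and read off Borel's equation as the resulting compatibility constraint.

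First, I would combine the iterated pullbacks obtained by successively adding the orbits $G/G, G/C_p^1, \ldots, G/C_p^{p+1}$. At each step $K = C_p^i$, the Weyl group is the cyclic group $G/C_p^i \cong C_p$, and the analysis of $\cPic(\StMod^\fin(\Z C_p))$ already carried out in the $G = C_p$ case yields the parity constraints $d_{C_p^i} \equiv d_G \pmod 2$ (with the appropriate orientation bookkeeping at $p = 2$). These are necessary for $X'$ to exist, but are strictly weaker than Borel.

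Next, I would extract an Euler-characteristic formula for the class of $H\overline{\Phi}^e(X')$ in $K_0(\StMod^\fin(\Z G))$. Naturality of the iterated gluing should yield a decomposition of $X'$ into pieces indexed by the orbits $G/K$, $K \neq e$, each contributing $(-1)^{d_K} [\Z[G/K, \chi_K]]$ to the class. Combining this with the character-theoretic identity $\mathbb{Q}[G/e] = \sum_i \mathbb{Q}[G/C_p^i] - p\, \mathbb{Q}[G/G]$ (verifiable directly by decomposing each $\mathbb{Q}[G/K]$ into the one-dimensional characters of the abelian group $G = C_p \times C_p$) and with the vanishing $[\Z[G]] = 0$ in $\StMod^\fin(\Z G)$, this identifies $H\overline{\Phi}^e(X')$ with $\Z$ shifted by $n = \sum_i d_{C_p^i} - p\, d_G$. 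Since $G = C_p \times C_p$ has $p$-rank two, the trivial module $\Z$ admits no finite periodic projective resolution, so the shifts $\Z[n]$ for varying $n$ represent pairwise distinct objects of $\StMod^\fin(\Z G)$, and the existence of an isomorphism $\Z[d_e, \chi_e] \simeq H\overline{\Phi}^e(X')$ forces $d_e = n$, which is Borel's equation.

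The main obstacle is making the Euler-characteristic computation rigorous: one must show that the abstractly-defined $X' \in \cPic(\Sp^G/\{G/e\})$, built by iterated pullback rather than by an explicit cell filtration, admits a class in $K_0(\StMod^\fin(\Z G))$ that decomposes as claimed, and that the shift degree is genuinely well-defined on the level of isomorphism classes and computable in terms of this class. A concrete way to circumvent this would be to choose, for each realizable $(d_G, d_{C_p^i})$, a specific representation sphere $S^V$ realizing it (using the character identity above to see that the image of $\mathrm{RO}(G)$ already hits every parity-compatible tuple) and then appeal to Corollary~\ref{cor:dimfunctionindependence} to transport the equation to an arbitrary invertible $X$ with the same dimensions.
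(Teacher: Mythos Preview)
Your ``circumvention'' at the end is essentially the paper's proof: one checks that the realizable partial dimension functions in $\cD_{\{G/e\}}(G)$ are spanned by representation spheres (trivial $S^1$ and restrictions of $2$-dimensional rotation representations along the quotients $G\to C_p$), observes these satisfy the Borel relation, and then invokes Corollary~\ref{cor:dimfunctionindependence} together with the non-periodicity of $\widehat{H}^*(C_p\times C_p;\Z)$ to see that $d_e$ is uniquely determined by the remaining $d_H$. So that route is correct and matches the paper.

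Your primary route via $K_0(\StMod^\fin(\Z G))$, however, has a genuine gap. In any stable category the suspension acts on $K_0$ by $-1$, so $[\Z[n]]$ and $[\Z[m]]$ agree in $K_0$ whenever $n\equiv m\pmod 2$; the $K_0$ class simply does not remember the shift degree. Thus even if you succeeded in writing $[H\overline{\Phi}^e(X')]$ as an alternating sum of permutation modules and simplifying via the character identity, you would land on an element of $K_0$, not on a specific object $\Z[n]$, and the non-periodicity argument at the end only tells you that distinct shifts of $\Z$ are non-isomorphic as \emph{objects}---it does not let you recover $n$ from a $K_0$ class. To salvage this idea one would need to work with a finer invariant than $K_0$ (essentially the graded endomorphism ring $\widehat{H}^*(G;\Z)$ as a module over itself), at which point the argument collapses back into the representation-sphere comparison anyway.
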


We will actually show the following stronger statement:

\begin{proposition}
\label{prop:realisabilityborel}
\leavevmode
\begin{enumerate}
\item For $p$ odd, a generalized dimension function $d\in\cD(C_p\times C_p)$ can be realized precisely if all $d_{C_p^i} - d_{C_p\times C_p}$ are even, and the Borel relation holds, i.e.
\[
d_e - \sum_{i} d_{C_p^i}  + p d_{C_p\times C_p}(X)=0.
\]
\item For $p=2$, a generalized dimension function $d\in \cD(C_2\times C_2)$ can be realized precisely if the orientation behaviour at $C_2^i$ is determined by the sign of $d_{C_2^i} - d_{C_2\times C_2}$, the orientation behaviour at $e$ is given by the homomorphism $C_2\times C_2 \to \{\pm 1\}$ which is the product of $d_{C_2^i} - d_{C_2\times C_2}$ copies of the homomorphism with kernel $C_2^i$, multiplied over all $i$, and the Borel relation
\[
d_e - \sum_i d_{C_2^i} + 2 d_{C_2\times C_2}
\]
holds.
\end{enumerate}
\end{proposition}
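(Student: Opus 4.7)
The plan is to descend through the chain of families
\[
\Orb(G) \supset \Orb(G)\setminus\{G/G\} \supset \{G/e\} \supset \emptyset,
\]
applying the $1$-cartesian pullback squares of Theorem \ref{thm:pictotal1cartesian} at each stage. For the stages from $\Orb(G)$ down to $\cF=\{G/e\}$, Proposition \ref{prop:realisabilityweyl} reduces realizability of the restriction $d|_{\cD_{\{G/e\}}(G)}$ to realizability of $\varphi^K d \in \cD(W_G(K))$ for each $G/K\not\in\{G/e\}$. The Weyl group $W_G(G)=e$ imposes no condition. For $W_G(C_p^i) \cong C_p$, the restriction $\varphi^{C_p^i} d$ assigns dimensions $d_{C_p^i}$ and $d_G$ at the trivial and full subgroups of $C_p$, so the $C_p$ analysis from the previous subsection forces exactly the parity $d_{C_p^i}\equiv d_G\pmod 2$ for $p$ odd, or (for $p=2$) the orientation behaviour at $C_2^i$ described in the statement. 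This yields the first two stated conditions and produces an invertible $X\in\cPic(\Sp^G_\omega/\{G/e\})$ realizing the partial dimension function whenever they hold.

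For the final step, lifting $X$ to $\cPic(\Sp^G_\omega)$ with prescribed $d_e$ (and orientation at $e$ for $p=2$), I apply Proposition \ref{prop:gluingstableelement}: since $G$ is itself a $p$-group, $P=G$ is the only Sylow and the stability condition is vacuous, so realizability is equivalent to the existence of an isomorphism $\Z[d_e] \simeq H\overline{\Phi}^e X$ in $\StMod(\Z_p G)$ respecting the orientation data. By Dade's classification of endotrivial modules over elementary abelian $p$-groups, transferred from $\F_p$ to $\Z_p$ coefficients via Theorem \ref{thm:pic1cartesian}, $\Pic(\StMod(\Z_p(C_p\times C_p)))$ equals $\Z$ for $p$ odd and $\Z\oplus\Hom(G,\{\pm 1\})$ for $p=2$, with $\Z_\chi[n]$ corresponding to the class $(n,\chi)$. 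The gluing thus exists precisely when an intrinsic pair $(n(X),\chi(X))$ matches the required target.

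The key step is to identify $(n(X),\chi(X))$ as an explicit function of the partial dimension data. By Corollary \ref{cor:dimfunctionindependence} this reduces to verifying the formula on a single representative of each admissible partial dimension function. Representation spheres $S^V$ for virtual real $G$-representations provide such representatives: the character theory of $C_p\times C_p$ (the complete list of real irreducibles being the trivial representation, the sign characters $\chi_{C_2^i}$ for $p=2$, and the $2$-dimensional irreducibles $\chi_{a,b}\oplus\chi_{-a,-b}$ for $p$ odd) shows that $\RO(G) \to \cD_{\{G/e\}}(G)$ surjects onto the parity-satisfying sublattice. For $S^V$ one computes directly $H\Phi^e S^V \simeq \Z_{\det V}[\dim V]$, giving $(n(S^V),\chi(S^V)) = (\dim V, \det V)$. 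The Borel identity
\[
\dim V = \sum_i \dim V^{C_p^i} - p\dim V^G
\]
is the M\"obius-type relation obtained from $\sum_i \sum_{h\in C_p^i} \chi_V(h) = p\,\chi_V(e) + \sum_{g\in G}\chi_V(g)$ together with $\dim V^H = |H|^{-1}\sum_{h\in H}\chi_V(h)$, while for $p=2$ the multiplicativity of $\det$ on direct sums combined with $\det\chi_{C_2^i}=\chi_{C_2^i}$ yields the stated product formula for the orientation at $e$. Matching $(n(X),\chi(X))$ with $d_e$ and the orientation behaviour at $e$ then produces the Borel relation and orientation formula of the proposition.

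The main obstacle will be the identification of $\Pic(\StMod(\Z_p(C_p\times C_p)))$ with its orientation data via Dade's classification (and the transfer from $\F_p$ to $\Z_p$ via Theorem \ref{thm:pic1cartesian}), and the verification that $\RO(G)$ surjects onto the parity-satisfying lattice; these are classical inputs but must be assembled with care, particularly to keep track of the orientation behaviour in the $p=2$ case.
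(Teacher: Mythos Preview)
Your overall strategy coincides with the paper's: reduce to $\cF=\{G/e\}$ via Proposition~\ref{prop:realisabilityweyl} (yielding the parity/orientation conditions from the $C_p$ case), then observe that every admissible partial dimension function is already realised by a representation sphere, and finally analyse the last gluing step. The paper's execution of the last step is, however, significantly lighter than yours. Once one knows the partial functions are spanned by representation spheres $S^V$, the image $H\overline{\Phi}^e X$ is automatically of the form $\Z_{\det V}[\dim V]$, and the only remaining question is whether the value $d_e$ (and, for $p=2$, the orientation at $e$) is \emph{uniquely} determined. The paper settles this by the elementary observation that $\Z[n]\simeq\Z[m]$ in $\StMod(\Z\,C_p\times C_p)$ would force Tate cohomology to be $(n-m)$-periodic, which it visibly is not; the $p=2$ orientation is then handled by Corollary~\ref{cor:orientationbehaviour}. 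No appeal to the full Picard group of the stable module category is needed.

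Your route through Dade's theorem is not wrong in spirit, but the specific step ``transferred from $\F_p$ to $\Z_p$ coefficients via Theorem~\ref{thm:pic1cartesian}'' does not work: that theorem compares $R$ with $\pi_0(R)$, and $\pi_0(\Z_p)=\Z_p$, so it gives no passage to $\F_p$. Moreover, Dade gives you surjectivity of $(n,\chi)\mapsto\Z_\chi[n]$ onto $\Pic(\StMod)$, whereas what your argument actually requires is \emph{injectivity} --- and that is exactly the non-periodicity statement the paper uses. In short: drop the Dade detour, and after reducing via Corollary~\ref{cor:dimfunctionindependence} to representation spheres, conclude using non-periodicity of $\widehat{H}^*(C_p\times C_p;\Z)$. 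Your character-theoretic derivation of the Borel relation on $\RO(G)$ is a nice alternative to the paper's direct check on generators.
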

\begin{proof}
We focus first on the case $p$ odd, and look at invertible objects in $\Sp^{C_p\times C_p}_\omega/\{ C_p\times C_p/e\}$. These have dimension functions indexed by the full group $C_p\times C_p$ and the $p+1$ many subgroups of order $p$. By Proposition \ref{prop:realisabilityweyl}, a partial dimension function $d$ can be realized if and only if, for each of the subgroups $C_p^i$, the element $\varphi^{C_p^i} d\in \cD(C_p)$ can be realized by an invertible object of $\Sp^{C_p}$. Here, $(\varphi^{C_p^i} d)_e = d_{C_p^i}$, and $(\varphi^{C_p^i} d)_{C_p} = d_{C_p\times C_p}$, and so a partial dimension function $d\in \cD_{\{C_p\times C_p/e\}}(C_p\times C_p)$ can be realized if and only if all the differences
\[
d_{C_p^i} - d_{C_p\times C_p}
\]
are even. Observe now that these partial dimension functions are spanned by the partial dimension function of the trivial $S^1$, and partial dimension functions which assign $2$ to a single $C_p^i$ and $0$ to all other groups. This partial dimension function is in fact realized by an actual representation sphere, namely the one obtained by restricting a nontrivial rotation representation $\lambda$ of $C_p$ on $\bR^2$ to $C_p\times C_p$ along a homomorphism $C_p\times C_p\to C_p$ with kernel $C_p^i$. So all realizable partial dimension functions in $\cD_{\{C_p\times C_p/e\}}(C_p\times C_p)$ can in fact be realized by invertible objects of $\Sp^{C_p\times C_p}_\omega$.

Since it is easy to see that the generating representation spheres above satisfy the Borel relation, it now suffices to check that the dimension of $\Phi^e(X)$ for an invertible object $X\in \cPic(\Sp^{C_p\times C_p}_\omega)$ is uniquely determined by the partial dimension function of the image of $X$ in $\cPic(\Sp^{C_p\times C_p}_\omega/\{ C_p\times C_p/e\})$. In light of Corollary \ref{cor:dimfunctionindependence}, this follows once we know that $\Z[n]\simeq \Z[m]$ in $\StMod(\Z C_p\times C_p)$ implies $n=m$. If this were wrong, Tate cohomology of $C_p\times C_p$ with coefficients in $\Z$ would be $(n-m)$-periodic, which is seen to be absurd by direct computation.

For $p=2$, one similarly checks that the realizable partial generalized dimension functions in $\cD_{\{C_2\times C_2/e\}}(C_2\times C_2)$ are precisely the ones where the dimensions are arbitrary, and the orientation behaviour at $C_2^i$ depends on the parity of $d_{C_2^i}-d_{C_2\times C_2}$. These are spanned by the trivial $S^1$ and restrictions of the sign representation sphere of $C_2$, all of whom satisfy the Borel relation. Similarly, one checks that these representations spheres and everything spanned by them have the given description for the orientation behaviour at $e$. Finally, nonperiodicity of Tate cohomology of $C_2\times C_2$ and Corollary \ref{cor:orientationbehaviour} finish the proof.
\end{proof}

\subsection{The group $D_{2p}$}

In the dihedral group $D_{2p}$ for odd $p$, we have four conjugacy classes of subgroups: The trivial subgroup $e$, the entire group $D_{2p}$, the normal subgroup $C_p$ and $p$ copies of $C_2$, all conjugate to each other.

\begin{proposition}
\label{prop:dihedral}
A generalized dimension function $d\in \cD(D_{2p})$ is realizable is and only if the following holds:
\begin{enumerate}
\item The orientation behaviour at $C_p$ is determined by the parity of $d_{C_p}-d_{D_{2p}}$.
\item The orientation behaviour at $e$ is determined by the parity of $d_{C_2} + d_{C_p}$.
\item We have $d_e - 2d_{C_2} - d_{C_p} + 2d_{D_{2p}} = 0$ mod $4$.
\end{enumerate}
\end{proposition}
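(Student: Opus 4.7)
The plan is to apply the paper's machinery in two stages, using the intermediate family $\cF = \{D_{2p}/e\}$. First I will use Proposition \ref{prop:realisabilityweyl} to determine the realizable elements of $\cD_{\cF}(D_{2p})$; then I will use Proposition \ref{prop:gluingstableelement} to determine which lifts to full generalized dimension functions in $\cD(D_{2p})$ are realizable.

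For the first stage, the relevant Weyl groups are $W_{D_{2p}}(D_{2p}) = e$, $W_{D_{2p}}(C_p) = C_2$ (since $C_p$ is normal), and $W_{D_{2p}}(C_2) = e$ (since for $p$ odd, $N_{D_{2p}}(C_2) = C_2$). Only $\varphi^{C_p} d \in \cD(C_2)$ produces a nontrivial constraint, and by the $C_2$-analysis already carried out this is realizable precisely when the orientation at $C_p$ has the parity of $d_{C_p} - d_{D_{2p}}$; this gives condition (1). Every such partial dimension function is moreover realised by the virtual representation sphere $S^V$ with $V = d_{D_{2p}} \cdot 1 \oplus (d_{C_p}-d_{D_{2p}}) \cdot \sigma \oplus (d_{C_2}-d_{D_{2p}}) \cdot \lambda$, where $\sigma$ denotes the sign character (kernel $C_p$) and $\lambda$ any two-dimensional rotation representation.

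For the second stage, Proposition \ref{prop:gluingstableelement} reduces lifting $S^V$ to an invertible object of $\cPic(\Sp^{D_{2p}}_\omega)$ with specified $(d_e, \chi)$ to the existence, for each prime $\ell \in \{2, p\}$, of a stable isomorphism in $\StMod(\Z_\ell P_\ell)$ between $\Z[d_e]$ twisted by $\chi|_{P_\ell}$ and $\Z[\dim V]$ twisted by $\det V|_{P_\ell}$. At $\ell = 2$ with $P = C_2$, stability is automatic because $N_{D_{2p}}(C_2) = C_2$; using Lemma \ref{lem:mappingtate}, the existence of an isomorphism is controlled by the Tate cohomology $\hat H^{\dim V - d_e}(C_2; \Z_2(\chi\cdot\det V|_{C_2}))$, which is $\F_2$ precisely when the parity of $\dim V - d_e$ matches the twist, giving the constraint $\chi|_{C_2} = \det V|_{C_2}$ once $\dim V - d_e$ is even. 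At $\ell = p$ with $P = C_p$, both $\chi|_{C_p}$ and $\det V|_{C_p}$ are automatically trivial (the former because every $\{\pm 1\}$-character of $D_{2p}$ kills $C_p$, the latter because any real representation of an odd-order group has trivial determinant), so the relevant hom group reduces to $\hat H^{\dim V - d_e}(C_p; \Z_p) = \F_p$ for even shifts and $0$ otherwise.

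The central computation, and the main obstacle, is the stability at $\ell = p$. The nontrivial element of $W_{D_{2p}}(C_p) = C_2$ acts on $C_p$ by inversion, and one must show that this acts on $\hat H^{2k}(C_p;\Z_p) \cong \F_p$ by $(-1)^k$. This follows by writing the degree-$2$ generator as the Bockstein of the identity in $H^1(C_p;\F_p) = \Hom(C_p,\F_p)$, on which inversion is multiplication by $-1$, and applying multiplicativity of cup products. Stability then forces $\dim V - d_e \equiv 0 \pmod 4$. Reading off from the basic representations gives $\dim V = d_{C_p}+2d_{C_2}-2d_{D_{2p}}$ and $\det V = \sigma^{(d_{C_p}-d_{D_{2p}})+(d_{C_2}-d_{D_{2p}})} = \sigma^{d_{C_p}+d_{C_2}}$; substituting into the $\ell = p$ constraint yields condition (3) directly, and the resulting even parity of $\dim V - d_e$ combined with the $\ell = 2$ compatibility forces $\chi = \sigma^{d_{C_p}+d_{C_2}}$, which is condition (2). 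Conversely, given (1)--(3), all of the above isomorphism and stability conditions are satisfied, so the lift exists; the mod-$4$ sign bookkeeping from the $(-1)^k$-action is precisely what upgrades the naive mod-$2$ Borel relation to the mod-$4$ statement.
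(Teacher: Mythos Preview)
Your overall route is sound but differs from the paper's in the second stage. Having realised every admissible partial dimension function by a virtual representation sphere (as you do), the paper determines which $(d_e,\chi)$ arise by working directly in $\StMod(\Z D_{2p})$: it simply records that $\hat H^*(D_{2p};\Z)$ is $4$-periodic with values $\Z/2p,0,\Z/2,0,\ldots$ and that $\hat H^*(D_{2p};\Z_-)$ has values $0,\Z/2,\Z/p,\Z/2,\ldots$, whence $\Z_\chi[n]\simeq\Z[0]$ iff $\chi$ is trivial and $4\mid n$. You instead invoke Proposition~\ref{prop:gluingstableelement} and split into the primes $2$ and $p$ via stable elements; this is legitimate and has the virtue of explaining the $4$-periodicity through the $(-1)^k$-action of inversion on $\hat H^{2k}(C_p;\Z_p)$.

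There is, however, a gap in your stability analysis at $\ell=p$. The stability condition of Proposition~\ref{prop:stmodstable} conjugates a morphism not only via the automorphism $c_g$ of $C_p$ (your $(-1)^k$) but also via the action of the reflection $g$ on source and target as $D_{2p}$-modules; this contributes an extra sign $(\chi\cdot\det V)(g)$. The correct condition is $(-1)^k(\chi\det V)(g)=1$, which yields $k$ even only once one knows $\chi=\det V$, so your claim ``stability then forces $\dim V-d_e\equiv 0\pmod 4$'' presupposes condition~(2) rather than implying it. The repair is a reordering: existence of any isomorphism at $\ell=p$ forces $d_e-\dim V$ even; the $\ell=2$ constraint then gives $\chi=\det V$; and now $\ell=p$ stability yields the mod-$4$ congruence. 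The final answer is unaffected---when $\chi\neq\det V$ your erroneous $\ell=p$ condition still contradicts the $\ell=2$ one---but the logic as written is circular.
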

\begin{proof}
By Proposition \ref{prop:realisabilityweyl} and the results on $C_2$, the realizable partial generalized dimension functions in $d\in \cD_{\{D_{2p}/e\}}$ are precisely those where the orientation behaviour at $C_p$ is determined by the parity of $d_{C_p}-d_{D_{2p}}$, no further constraints. In particular, the first condition is necessary.

These partial generalized dimension functions are in fact spanned by the generalized dimension function of representation spheres: The trivial $1$-dimensional representation, the $1$-dimensional sign representation $\sigma$, and the $2$-dimensional dihedral representation $\lambda$. These have generalized dimension functions as indicated in the following table. Since all Weyl groups are either trivial or have precisely two homomorphisms to $\{\pm 1\}$, we indicate the orientation behaviour by a sign subscript:
\begin{equation}
\label{eq:tabledihedral}
\begin{array}{c|c|c|c|c}
 & e & C_2 & C_p & D_{2p}\\
\hline
1 & 1_+ & 1 & 1_+ & 1\\
\sigma & 1_- & 0 & 1_- & 0\\
\lambda & 2_- & 1 & 0_+ & 0
\end{array}
\end{equation} 

In particular, the map $\cPic(\Sp^G_\omega) \to \cPic(\Sp^G_\omega/\{ G/e\})$ is surjective. We determine its kernel. For that, we have to determine all invertible objects of $\Fun(BD_{2p}, \Mod_{\omega}(\Z))$ which are equivalent to $\Z[0]$ in $\StMod(\Z D_{2p})$. Invertible objects are of the form $\Z[n]$ or $\Z_-[n]$.

One computes that Tate cohomology of $D_{2p}$ is precisely $4$-periodic. With coefficients in $\Z$, the sequence of cohomology groups reads $\Z/2p, 0, \Z/2, 0, \ldots$. With coefficients in $\Z_{-}$, the sequence reads $0, \Z/2, \Z/p, \Z/2, \ldots$. As a result, $\Z_-[n]$ is never equivalent to $\Z[0]$ in $\StMod(\Z D_{2p})$, and $\Z[n]$ is equivalent to $\Z[0]$ if and only if $n$ is divisible by $4$.

It follows that the realizable entire generalized dimension functions are generated by the dimension functions of the representation spheres from table \eqref{eq:tabledihedral} and the generalized dimension function which is $4_+$ at $e$ and $0$ everywhere else. One directly checks that the subgroup of $\cD(D_{2p})$ generated by those can be characterized as claimed.
\end{proof}

\subsection{The group $A_5$}

We now compute the possible dimension functions for $G=A_5$. The conjugacy classes of subgroups of $A_5$ are described as follows:
\[
\begin{array}{c|ccccccccc}
|H| & 1 & 2 & 3 & 4 & 5 & 6 & 10 & 12 & 60\\
\hline
H & e & C_2 & C_3 & C_2\times C_2 & C_5 & D_6 & D_{10} & A_4 & A_5\\ 
\hline
W_G(H) & A_5 & C_2 & C_2 & C_3 & C_2 & e & e & e & e\\
\end{array}
\]

By Proposition \ref{prop:realisabilityweyl} a partial generalized dimension function $d\in \cD_{\{A_5/e\}}(A_5)$ is realizable if and only if the following holds:

\begin{enumerate}
\item The number $d_{C_2\times C_2} - d_{A_4}$ is even.
\item The orientation behaviour at $C_2$ is determined by the parity of $d_{C_2}-d_{C_2\times C_2}$.
\item The orientation behaviour at $C_3$ is determined by the parity of $d_{C_3}-d_{D_6}$.
\item The orientation behaviour at $C_5$ is determined by the parity of $d_{C_5}-d_{D_{10}}$.
\end{enumerate}

We now apply Proposition \ref{prop:gluingstableelement}. Let $X$ be an invertible object of $\Sp^{A_5}_\omega/\{ A_5/e\}$. The restriction of $d$ to $\cD(C_2\times C_2)$ can be realized by an invertible object of $\Sp^{C_2\times C_2}_\omega$ if and only if 
\[
d_e- 3d_{C_2} + 2d_{C_2\times C_2} = 0.
\]
It follows that under this condition, we find an isomorphism between $\Z[d_e]$ and $H\overline{\Phi}^e X$ in $\StMod(\Z_2 C_2\times C_2)=\StMod(\Z C_2\times C_2)$. Since then
\[
[\Z[d_e],H\overline{\Phi}^e(X)]_{\StMod(\Z_2 C_2\times C_2)} \simeq [\Z[d_e],\Z[d_e]]_{\StMod(\Z_2 C_2\times C_2)} \simeq \Z/4,
\]
which just doesn't have a nontrivial $C_3$-action, the isomorphism is necessarily stable, satisfying the assumption of Proposition \ref{prop:gluingstableelement}.

For $p=3$ and $p=5$ we proceed slightly differently: Note that $A_5$ and $D_6$ have the same $3$-Sylow subgroup, and a morphism of $\StMod(\Z_3 C_3)$ is stable with respect to $\StMod(\Z_3 A_5)$ if and only if it is stable with respect to $\StMod(\Z_3 D_6)$, since $D_6$ is the normalizer of $C_3$. The analogous statement at $p=5$ holds for $D_{10}$ and $C_5$. It follows that the assumption of Proposition \ref{prop:gluingstableelement} for $p=3$ and $p=5$ can be checked after restriction to $D_6$ and $D_{10}$. Since we fully understand conditions for realizability in those cases (Proposition \ref{prop:dihedral}),
we arrive at:

\begin{proposition}
A generalized dimension function $d\in \cD(A_5)$ is realizable if and only if:
\begin{enumerate}
\item $d_{C_2\times C_2} - d_{A_4} = 0$ mod $2$.
\item The orientation behaviour at $C_2$ is determined by the parity of $d_{C_2}-d_{C_2\times C_2}$.
\item The orientation behaviour at $C_3$ is determined by the parity of $d_{C_3}-d_{D_6}$.
\item The orientation behaviour at $C_5$ is determined by the parity of $d_{C_5}-d_{D_{10}}$.
\item $d_e - 3d_{C_2} + 2d_{C_2\times C_2} = 0$.
\item $d_{C_2} + d_{C_3} = 0$ mod $2$
\item $d_{C_2} + d_{C_5} = 0$ mod $2$
\item $d_{e} - 2d_{C_2} - d_{C_3} + 2d_{D_{6}} = 0$ mod $4$
\item $d_{e} - 2d_{C_2} - d_{C_5} + 2d_{D_{10}} = 0$ mod $4$
\end{enumerate}
\end{proposition}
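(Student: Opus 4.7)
The plan is to assemble the conditions from two sources: Proposition \ref{prop:realisabilityweyl} reduces realizability of $d\in\cD(A_5)$ to the realizability of a partial generalized dimension function in $\cD_{\{A_5/e\}}(A_5)$, and Proposition \ref{prop:gluingstableelement} then controls the final lift across the orbit $A_5/e$ at each prime $p\in\{2,3,5\}$ dividing $|A_5|=60$. The structure is completely dictated by the preceding discussion; the argument is essentially careful bookkeeping, together with the previously established realizability theorems for $C_p$, $C_p\times C_p$ and $D_{2p}$.

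The first step is to apply Proposition \ref{prop:realisabilityweyl}. For every nontrivial $K$, the Weyl group $W_{A_5}(K)$ is one of $e, C_2, C_3$, and $\varphi^K d$ is computed by pulling subgroups back through $N_{A_5}(K) \to W_{A_5}(K)$. The nontrivial cases yield: at $K=C_2$ the pair $(d_{C_2},d_{C_2\times C_2}) \in \cD(C_2)$, giving the orientation constraint (2); at $K=C_3$ the pair $(d_{C_3},d_{D_6}) \in \cD(C_2)$, giving (3); at $K=C_5$ the pair $(d_{C_5},d_{D_{10}}) \in \cD(C_2)$, giving (4); and at $K=C_2\times C_2$ the pair $(d_{C_2\times C_2},d_{A_4}) \in \cD(C_3)$, whose realizability by the odd-prime $C_p$-analysis requires the parity condition (1). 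The remaining Weyl groups are trivial and impose no constraint.

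For the second step, fix an invertible $X\in\Sp^{A_5}_\omega/\{A_5/e\}$ realizing $d|$, and for each prime $p$ I look for a stable isomorphism in $\StMod(\Z_p P)$ between $\Z[d_e]$ and the image of $X$. Since $A_5$ is simple we have $\Hom(A_5,\{\pm 1\})=0$, so $\Z[d_e]$ carries the trivial action. At $p=2$, the Sylow $P=C_2\times C_2$ has Weyl group $C_3$ in $A_5$, but $[\Z[d_e],\Z[d_e]]_{\StMod(\Z_2 (C_2\times C_2))}\cong \Z/4$ admits no nontrivial $C_3$-action, so any isomorphism is automatically stable. Existence of such an isomorphism is equivalent to realizability of $d$ restricted to $\cD(C_2\times C_2)$ in $\Sp^{C_2\times C_2}_\omega$, and Proposition \ref{prop:realisabilityborel} — using that the three order-two subgroups of the Sylow are $A_5$-conjugate, so all contribute the same $d_{C_2}$ — collapses the Borel relation to condition (5).

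For $p\in\{3,5\}$ the $p$-Sylow is $C_p$ with normalizer $D_{2p}$ in $A_5$, so by the observation recorded before the proposition, stability over $A_5$ reduces to stability over $D_{2p}$, and the gluing isomorphism exists exactly when $d|_{\cD(D_{2p})}$ is realizable in $\Sp^{D_{2p}}_\omega$. Feeding this into Proposition \ref{prop:dihedral} reproduces the orientation at $C_p$ already captured by (3), respectively (4); forces the orientation at $e$ for the restriction to $D_{2p}$ to be trivial (since the $A_5$-action on $\Z[d_e]$ restricts trivially), yielding the parity relations (6) and (7); and produces the mod-$4$ dihedral relations (8) and (9). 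The main point requiring care will be checking that the orientation at $e$ prescribed by $D_{2p}$-realizability is genuinely compatible with the trivial orientation demanded by the simplicity of $A_5$; once that is verified, the nine conditions of the statement assemble directly from the ingredients above.
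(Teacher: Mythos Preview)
Your proposal is correct and follows essentially the same route as the paper: first invoke Proposition~\ref{prop:realisabilityweyl} to obtain conditions (1)--(4) for the partial dimension function away from $A_5/e$, then apply Proposition~\ref{prop:gluingstableelement} prime by prime, using the $C_2\times C_2$ Borel relation at $p=2$ (with stability automatic since $\Z/4$ has no nontrivial $C_3$-action) and reducing stability at $p=3,5$ to the normalizers $D_{2p}$, where Proposition~\ref{prop:dihedral} yields (6)--(9). You are in fact slightly more explicit than the paper in spelling out how (6) and (7) arise from forcing the restricted orientation at $e$ to be trivial via simplicity of $A_5$.
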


A somewhat arbitrary basis for the subgroup of $\cD(A_5)$ consisting of all generalized dimension functions satisfying these conditions can be computed using the Smith normal form algorithm, and is given as follows:
\[
\begin{array}{c|c|c|c|c|c|c|c|c}
e & C_2 & C_3 & C_2\times C_2 & C_5 & D_6 & D_{10} & A_4 & A_5\\
\hline
1 & 1_+ & 1_+ & 1 & 1_+ & 1 & 1 & 1 & 1\\
4 & 2_- & 2_- & 1 & 0_+ & 1 & 0 & 1 & 0\\
5 & 3_+ & 1_+ & 2 & 1_+ & 1 & 1 & 0 & 0\\
3 & 1_- & 1_- & 0 & 1_- & 0 & 0 & 0 & 0\\
0 & 0_+ & 2_- & 0 & 0_+ & 1 & 0 & 0 & 0\\
8 & 4_+ & 0_+ & 2 & 0_+ & 0 & 0 & 0 & 0\\
0 & 0_+ & 4_+ & 0 & 0_+  & 0 & 0 & 0 & 0\\
12 & 4_+ & 0_+ & 0 & 0_+ & 0 & 0 & 0 & 0\\
\end{array}
\]
The first four rows can be realized by representation spheres over $A_5$, namely the trivial and the irreducible $4$, $5$ and (either one of the) $3$-dimensional representations. The remaining four rows are more exotic, and we have no geometric construction of the corresponding invertible $A_5$-spectra. Note that there are several other interesting dimension functions contained in the span, for example the dimension function with $d_{A_5}=1$ and $d_{H}=0$ for all other subgroups.
\bibliographystyle{amsalpha}
\bibliography{bibliography.bib}

\end{document}